\documentclass[11pt]{amsart}

\usepackage[english]{babel}     
\usepackage[utf8]{inputenc}
\usepackage{amsmath}
\usepackage{amsfonts}
\usepackage{amssymb}
\usepackage{mathrsfs}
\usepackage{stmaryrd}
\usepackage{hyperref}
\usepackage{xcolor}
\usepackage{comment}
\usepackage{enumerate}
\usepackage{tikz-cd}
\usepackage{pgfplots}
\usepackage{doi}

\usepackage{todonotes}
\setlength {\marginparwidth}{2cm}

%
%
\usepackage{comment}
\specialcomment{auxproof}
{\mbox{}\newline\textbf{BEGIN: AUX-PROOF}\dotfill\newline}
{\mbox{}\newline\textbf{END: AUX-PROOF}\dotfill\newline}
\excludecomment{auxproof}

\textwidth 6.4truein
\textheight 8.5truein
\oddsidemargin 0pt
\evensidemargin 0pt
\pagestyle{myheadings}
\overfullrule=0pt

\newtheorem{theorem}{Theorem}[section]
\newtheorem{theoremx}{Theorem}

\newtheorem{lemma}[theorem]{Lemma}
\newtheorem{proposition}[theorem]{Proposition}
\newtheorem{corollary}[theorem]{Corollary}

\theoremstyle{definition}
\newtheorem{definition}[theorem]{Definition}

\newtheorem{remark}[theorem]{Remark}

\theoremstyle{remark}
\numberwithin{equation}{section}


\let\epsilon\varepsilon

\newcommand{\R}			{{\operatorname{\mathbb{R}}}}

\DeclareMathOperator{\supp}{supp}

\DeclareMathOperator{\sign}{sign}

\newcommand{\lambdaw}{\widetilde{\lambda}}

\newcommand{\argu}{\textrm{arg} }

\newcommand{\Lloc}{L_{loc}^1}
\newcommand{\triplevert}{|\!|\!|}

\title[Schur multipliers of second order divided difference functions]{On the best constants of Schur multipliers of second order divided difference functions}

\date{\noindent \today.  MSC2010 keywords: 47B10, 47L20, 47H60.  MC is  supported by the NWO Vidi grant VI.Vidi.192.018 `Non-commutative harmonic analysis and rigidity of operator algebras'.}

\author[Martijn Caspers and Jesse Reimann]{Martijn Caspers and Jesse Reimann}

\address{TU Delft, EWI/DIAM,
	P.O.Box 5031,
	2600 GA Delft,
	The Netherlands}

\email{M.P.T.Caspers@tudelft.nl}
\email{J.Reimann@tudelft.nl}

\begin{document}

\maketitle

\begin{abstract}
We give a new proof of the boundedness of bilinear Schur multipliers of second order divided difference functions, as obtained  earlier by Potapov, Skripka and Sukochev in their proof of Koplienko's conjecture on the existence of higher order spectral shift functions. Our proof is based on recent methods involving bilinear transference and the H\"ormander-Mikhlin-Schur multiplier theorem. Our approach provides a significant  sharpening of the known asymptotic bounds of bilinear Schur multipliers of second order divided difference functions. Furthermore, we   give a new lower bound of these bilinear Schur multipliers, giving again a fundamental improvement on the best known bounds obtained by Coine, Le Merdy, Potapov, Sukochev and Tomskova.

More precisely, we prove that for $f \in C^2(\mathbb{R})$ and  $1 < p, p_1, p_2 < \infty$ with $\frac{1}{p} = \frac{1}{p_1} + \frac{1}{p_2}$ we have
 \[
\Vert  M_{f^{[2]}}: S_{p_1} \times S_{p_2} \rightarrow S_p \Vert \lesssim  \Vert f'' \Vert_\infty D(p, p_1, p_2),
 \]
 where the constant $D(p, p_1, p_2)$ is specified in Theorem~\ref{Thm=TheoremA} and $D(p, 2p, 2p) \approx p^4 p^\ast$ with $p^\ast$ the H\"older conjugate of $p$.
 We further show that for $f(\lambda) = \lambda \vert \lambda \vert$, $\lambda \in \mathbb{R}$,   for every $1 < p < \infty$ we have
 \[
p^2 p^\ast \lesssim \Vert  M_{f^{[2]}}: S_{2p} \times S_{2p} \rightarrow S_p \Vert.
 \]
 Here $f^{[2]}$ is the second order divided difference function of $f$ with  $M_{f^{[2]}}$ the associated Schur multiplier. In particular it follows that our estimate $D(p, 2p, 2p)$ is optimal for $p \searrow 1$.
\end{abstract}

\section{Introduction}

In~\cite{PSS-Inventiones}, Potapov, Skripka and Sukochev resolved a fundamental open conjecture by Koplienko~\cite{kopl_trace}. This conjecture asserts the existence of  so-called \emph{spectral shift functions} $\eta_{n,H,V}$, for which the expression \begin{equation}\label{eqn: spectral_shift}
    {\rm Tr} \left(f(H+V)-\sum_{k=0}^{n-1}\frac{1}{k!}\frac{d^k}{dt^k}f(H+tV)\Bigr|_{t=0}\right)=\int_{\mathbb{R}}f^{(n)}(t)\eta_{n,H,V}(t)dt
\end{equation}
is well-defined for the trace $ {\rm Tr} $ on bounded operators on a Hilbert space,  $H$ a self-adjoint operator, and  $V$ in the Schatten class $S_n$. The existence of the spectral shift function goes back to the fundamental work of Krein~\cite{Krein1,Krein2} and Lifschitz~\cite{Lifschitz}, and has ample applications in perturbation theory, mathematical physics, and noncommutative geometry. See~\cite{Gesztesy} for an overview.

The key result in~\cite{PSS-Inventiones} is~\cite[Remark 5.4]{PSS-Inventiones}, which is a direct consequence of the more general result proved in~\cite[Theorem 5.3]{PSS-Inventiones}. It asserts that multiple operator integrals of higher order divided difference functions are bounded maps on Schatten classes. The precise statement of~\cite[Remark 5.4]{PSS-Inventiones} in the second order case, up to the boundedness constant, is recorded below as Theorem~\ref{Thm=TheoremA_intro}. 

In the linear case, i.e.\ order one, the search for optimal proofs and constants for operator integrals of divided difference functions has attracted great attention and a considerable number of the most important problems have been solved.
The existence of first order spectral shift functions was first resolved in~\cite{PoSu11}, and soon after the proofs were optimised  to yield sharp estimates for double operator integrals of divided difference functions. In particular, the best constants were found in~\cite{CMPS}, and weak-$L^1$  and $\mathrm{BMO}$ end-point estimates have been obtained in~\cite{CPSZ} and~\cite{CJSZ} respectively. Furthermore, in the range $0 < p < 1$, the boundedness of double operator integrals of divided difference functions has fully been clarified recently by McDonald and Sukochev~\cite{McDonaldSukochev}. For $p=1$, the best known result goes back to Peller~\cite{Peller}. 
Finally,  a rather general H\"ormander-Mikhlin-Schur multiplier theorem was established in the groundbreaking work~\cite{ParcetAnnals}, yielding the main results of~\cite{PoSu11} and~\cite{CMPS} as a special case. 

When we consider the higher order problem of finding good bounds on multilinear operator integrals of divided difference functions as in~\cite{PSS-Inventiones},  nothing is known about optimal bounds or end-point estimates except for the case of the generalised absolute value map~\cite{CSZ-Israel}. 
Since the key results from~\cite{PSS-Inventiones}  were proven, which is over a decade ago,  significant advances have been made in the theory of Schur multipliers. 
This motivates our re-examination of this result, as  we investigate here whether recent proof methods offer new insights.   Let us first state our main result and then comment on the proof methods. 

\vspace{0.3cm}

\noindent {\bf Upper bounds.} We first define the second order divided difference functions. Let  $f \in C^2(\mathbb{R})$, then the first order divided difference function is defined by the difference quotient
\[
f^{[1]}(\lambda,\mu):=\frac{f(\lambda)-f(\mu)}{\lambda-\mu}
\]
for $\lambda\ne\mu$, and by setting $f^{[1]}(\lambda,\lambda):= f'(\lambda)$. The second order divided difference function is then defined by
\[
f^{[2]}(\lambda_0,\lambda_1,\lambda_2):=\frac{f^{[1]}(\lambda_{i-1},\lambda_{i})-f^{[1]}(\lambda_{i},\lambda_{i+1})}{\lambda_{i-1}-\lambda_{i+1}}
\]
where $i$ is chosen such that $\lambda_{i-1} \not = \lambda_{i+1}$, with $\lambda_3$ interpreted as $\lambda_0$, and otherwise we set $f^{[2]}(\lambda,\lambda,\lambda):= f''(\lambda)$. The function~$f^{[2]}$ is well-defined and invariant under permutation of the variables.  
Our main result is now stated as follows.   The definition of a Schur multiplier will be recalled in Section~\ref{Sect=Preliminaries}.  Throughout the paper we use the notation $p^\ast = \frac{p}{p-1}$ for the H\"older conjugate of $1 < p < \infty$.

\begin{theoremx}\label{Thm=TheoremA_intro}
For every $f \in C^2(\mathbb{R})$ and for every $1<p, p_1, p_2 < \infty$ with $\frac{1}{p} = \frac{1}{p_1} + \frac{1}{p_2}$ we have
\[
\Vert M_{f^{[2]}}: S_{p_1} \times S_{p_2} \rightarrow S_{p} \Vert \lesssim D(p, p_1, p_2) \Vert f'' \Vert_\infty,
\]
where
\[
\begin{split}
D(p, p_1, p_2)  =  & C(p,p_1, p_2) (\beta_{p_1}  + \beta_{p_2}  ) +  \beta_{p_1} \beta_{p_2}(\beta_p  + \beta_{p_1}  + \beta_{p_2} ), \\ 
 C(p,p_1, p_2) = & \beta_{p}\beta_{p_1}\beta_{p_2}   
    +\min(\beta_{p_1}^2\beta_{p},\beta_{p}^2\beta_{p_1})
 + \min(\beta_{p_2}^2\beta_{p},\beta_{p}^2\beta_{p_2}) +\min(\beta_{p_2}^2\beta_{p_1},\beta_{p_1}^2\beta_{p_2}),
\end{split}
\]
and $\beta_q = q q^\ast$.
\end{theoremx}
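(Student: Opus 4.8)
The plan is to deduce the bound on $\|M_{f^{[2]}}\colon S_{p_1}\times S_{p_2}\to S_p\|$ from a Hörmander--Mikhlin--Schur multiplier theorem via bilinear transference, following the strategy that in the linear case reduces divided difference multipliers to Calderón--Zygmund-type symbols. The first step is to record an integral (Fourier / Krein-type) representation of $f^{[2]}$: writing $f''$ through its Fourier transform, one expresses $f^{[2]}(\lambda_0,\lambda_1,\lambda_2)$ as an average over $s,t$ of elementary "rank-one-like" symbols, e.g.\ of products $e^{i s \lambda_j} e^{i t \lambda_k}$ integrated against a probability-type measure coming from the simplex $\{u_0+u_1+u_2=1,\ u_i\ge 0\}$. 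This identity is exactly what turns the Schur multiplier into a superposition of Schur multipliers of symbols that are \emph{smooth away from the diagonal} with the correct homogeneity, so that the Hörmander--Mikhlin--Schur theorem from~\cite{ParcetAnnals} applies with constants controlled by the various $\beta_q = q q^\ast$.

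Next I would split the symbol $f^{[2]}$ into the region near the "full diagonal" $\lambda_0=\lambda_1=\lambda_2$ and its complement, and further decompose the off-diagonal part according to which pair of coordinates is separated. On each piece one of two things happens: either the symbol factors through a product of two \emph{first order} divided difference symbols (so that the known linear estimates — contributing factors $\beta_{p_1}\beta_{p_2}$, $\beta_p$, etc. — can be composed), or it is genuinely a new bilinear symbol to which one applies the bilinear Hörmander--Mikhlin--Schur estimate; tracking which of the two exponents controls the Calderón--Zygmund norm in each regime is what produces the $\min(\beta_{p_i}^2\beta_p,\beta_p^2\beta_{p_i})$ terms and the remaining summands of $D(p,p_1,p_2)$. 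Each application of transference costs a factor $\beta_q$ for the relevant $q\in\{p,p_1,p_2\}$, and multilinearity forces these factors to combine multiplicatively across the two slots, which is the source of the triple products $\beta_p\beta_{p_1}\beta_{p_2}$ in $C(p,p_1,p_2)$ and of the cubic terms $\beta_{p_1}\beta_{p_2}(\beta_p+\beta_{p_1}+\beta_{p_2})$.

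The main obstacle I anticipate is \emph{not} the single-symbol boundedness — that is essentially handed to us by the Hörmander--Mikhlin--Schur machinery — but rather assembling the pieces while keeping the constants sharp: one must organise the dyadic/partition-of-unity decomposition of the simplex so that (i) only finitely many pieces appear, with absolute combinatorial constants, (ii) each piece is normalised so that its Calderón--Zygmund norm is bounded by an absolute multiple of $\|f''\|_\infty$ (this is where the second derivative, as opposed to higher derivatives, must be shown to suffice — presumably via an integration-by-parts / commutator argument exploiting the homogeneity of $f^{[2]}$), and (iii) the bilinear transference is applied in the order that realises the smaller of the two competing constants, yielding the $\min(\cdot,\cdot)$ expressions rather than a crude symmetric bound. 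A secondary technical point is handling the case $f\notin\mathcal{S}(\mathbb{R})$: one first proves the estimate for Schwartz $f$ (or for $f$ with $f''$ compactly supported and smooth), with a constant depending only on $\|f''\|_\infty$, and then removes the regularity assumption by an approximation argument using that $M_{f^{[2]}}$ depends on $f$ only through $f^{[2]}$, which in turn depends continuously on $f''$ in an appropriate weak sense.
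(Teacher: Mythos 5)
Your proposal has a genuine gap at its very first step. Representing $f^{[2]}$ via the Fourier transform of $f''$ integrated over the simplex produces bounds controlled by $\Vert \mathcal{F}f''\Vert_{L^1}$ (or a Besov-type norm of $f$), not by $\Vert f''\Vert_\infty$; the entire difficulty in this subject is precisely that one cannot pass from such a superposition of exponential symbols to an $L^\infty$-bound on $f''$ without additional ideas, and your appeal to an unspecified ``integration-by-parts / commutator argument exploiting homogeneity'' is exactly the missing content rather than a routine normalisation. The paper avoids any Fourier representation of $f$ altogether: it uses the algebraic identity for divided differences (Lemma~\ref{Lem=Decomposition}) together with an angular partition of unity in the variables $(\lambda_1-\lambda_0,\lambda_2-\lambda_1)$ to write $f^{[2]}(\lambda_0,\lambda_1,\lambda_2)$ as a finite sum of products of (i) a two-variable symbol $f^{[2]}(\lambda,\mu,\mu)$, which is a genuine linear Schur multiplier handled by the H\"ormander--Mikhlin--Schur theorem of~\cite{ParcetAnnals} with constant $\beta_q\Vert f''\Vert_\infty$, and (ii) an $f$-\emph{independent}, homogeneous, Toeplitz-form three-variable symbol.

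The second gap is your invocation of a ``bilinear H\"ormander--Mikhlin--Schur estimate'' as an available black box: no such theorem exists for the bilinear symbols in question, and this is where most of the paper's work lies. The bilinear pieces are of Toeplitz form, so they are estimated by multilinear transference (\cite{CKV,CJKM}) to bilinear Fourier multipliers; those are shown to be bilinear Calder\'on--Zygmund operators, and the operator-valued bilinear CZ theorem of~\cite{DiPlinioMathAnn} is applied, with the $p$-dependence of its constant extracted by reworking the dyadic representation argument in the appendix. The $\min(\beta_{p_i}^2\beta_p,\beta_p^2\beta_{p_i})$ terms arise there from cyclic-permutation/duality choices in estimating the dyadic shifts with a non-cancellative Haar function, not from a regime analysis of the CZ norm as you suggest; and the term $\beta_{p_1}\beta_{p_2}(\beta_p+\beta_{p_1}+\beta_{p_2})$ comes from a separate $S_1$-type bound (Theorem~\ref{Thm=S1Estimate}) for four of the six bilinear pieces, obtained by a Mellin-type expansion of the homogeneous symbol, combined with the linear HMS bounds. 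The paper also arranges signs so the symbols are odd, forcing the paraproducts in the representation theorem to vanish --- an ingredient with no counterpart in your outline. As written, your proposal asserts the shape of $D(p,p_1,p_2)$ but does not supply the mechanisms that produce it, and its opening reduction would not yield the $\Vert f''\Vert_\infty$ dependence at all.
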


In particular, if we set $p_1=p_2=2p$ we get the following asymptotic behaviour for the constant  $D(p, 2p, 2p)$. For $p\to\infty$,  $D(p, 2p, 2p)$ is of order at most $O(p^4)$,  and of order~$O(p^\ast)$  when $p \searrow 1$. To see the latter limit, just note that $(2p)^\ast \nearrow 2$ and in particular does not blow up. This improves on the constant obtained by the proof method in~\cite{PSS-Inventiones} by eight orders, see Remark~\ref{Remark=OldOrder}. Note that in Theorem~\ref{Thm=TheoremB_intro} below, we justify that our constants must be quite close to the optimal ones. 

\vspace{0.3cm}

\noindent {\bf Proof methods.}  We now describe the novel parts of 
 our proof. Essentially, there are four aspects: avoidance of triangular truncations, bilinear transference, the use of the H\"ormander-Mikhlin-Schur multiplier theorem~\cite{ParcetAnnals}, and finally, in combining the estimates  we use a range of bilinear multipliers that map to $S_1$. 
 
 To start with, our proof relies on the following decomposition of the divided difference function into two-variable terms and three-variable Toeplitz form terms. 

\begin{equation}\label{Eqn=IntroDec} 
f^{[2]}(\lambda_0, \lambda_1, \lambda_2) = \underbrace{   \frac{\lambda_0 - \lambda_1}{ \lambda_0 - \lambda_2} }_{\substack{ \textrm{Three-variable} \\ \textrm{Toeplitz term} }}
\underbrace{
f^{[2]}(\lambda_0, \lambda_1,\lambda_1)
}_{ \textrm{Two-variable term } } 
+
\underbrace{ 
\frac{\lambda_1 - \lambda_2}{ \lambda_0 - \lambda_2} 
}_{\substack{ \textrm{Three-variable} \\ \textrm{Toeplitz term} }}
\underbrace{  f^{[2]}(\lambda_1, \lambda_{1}, \lambda_{2} )}_{\textrm{ Two-variable term  }  }.
\end{equation}

\noindent  This yields a decomposition of the corresponding Schur multiplier into linear Schur multipliers and bilinear Toeplitz form Schur multipliers, which we can treat separately. Crucially, we refine the decomposition~\eqref{Eqn=IntroDec} such that we can avoid the use of triangular truncations. This alone improves the upper bound on the norm of the Schur multiplier by three orders in $p$ compared to~\cite{PSS-Inventiones}. 

  The boundedness of a linear Toeplitz form Schur multiplier is implied by the boundedness of an associated Fourier multiplier through the transference method~\cite{BoFe84,NeRi11,CaSa15}. This transference method was recently extended to multilinear Toeplitz form Schur multipliers~\cite{CKV,CJKM}. We apply this to reduce our proof of the boundedness of the bilinear Toeplitz form Schur multipliers to the boundedness of the associated bilinear Fourier multiplier. 

For this, we use that
it is possible to show that this Fourier multiplier is a so-called Calder\'on-Zygmund operator. Such operators are known to be well-behaved under extension to UMD spaces in the linear case, such as for example the Schatten classes $S_p$, $p\in(1,\infty)$, see e.g.\ the monograph~\cite{aibs_vol_1}. 
This result was recently extended to multilinear Calder\'on-Zygmund operators~\cite{DiPlinioMathAnn}. Unfortunately, the proofs in~\cite{DiPlinioMathAnn} do not keep track of the constants, though following the proof gives an explicit constant. We have therefore carefully outlined the proof of~\cite{DiPlinioMathAnn} in the appendix of our paper, as the $p$-dependence of the bound when considering Schatten classes concerns our main result. A very important observation is that we are dealing in this paper with  Calder\'on-Zygmund operators that are Fourier multipliers, hence the paraproducts appearing in the multilinear dyadic representation theorem used in~\cite{DiPlinioMathAnn}  vanish. This also yields an improvement on the bounds of our  Calder\'on-Zygmund operators. 

 For non-Toeplitz form Schur multipliers, the transference method is generally difficult to apply, if at all possible. However, a recent result on the boundedness of linear Schur multipliers, including those of non-Toeplitz form, gives a rather simple sufficient condition for their boundedness. 
In~\cite{ParcetAnnals}, it was shown that a H\"ormander-Mihlin type condition implies boundedness of the Schur multipliers $M_m$, even if the symbol~$m$ is not of Toeplitz form. In fact, a slightly weaker condition is sufficient, as mixed derivatives need not be considered. It turns out that these H\"ormander-Mihlin type conditions can be used to effectively estimate the linear (non-Toeplitz) terms occuring in~\eqref{Eqn=IntroDec}. 
 
Finally we need to combine the estimates we get for the three-variable Toeplitz terms with the ones for the two variable terms. Each of these terms yield a constant of order $O(p^\ast)$ for $p \searrow 1$ and so a naive combination of the estimates would yield order $O( (p^\ast)^2)$.  Interestingly, we have found a way to combine  the two estimates so that for the asymptotics for $p \searrow 1$ only one of the terms is relevant, and we are able to control the norm of our Schur multiplier with order $O(p^\ast)$ again. For this we prove that certain bilinear multipliers that appear in our decomposition actually map boundedly to $S_1$.


\vspace{0.3cm}

\noindent {\bf Lower bounds.}   In the final part of this paper we establish a lower bound for the bilinear Schur multiplier appearing in Theorem~\ref{Thm=TheoremA_intro}. An alternative form of this problem was already considered in~\cite{CLPST}, where it was shown that there exists a function $f \in C^2(\mathbb{R})$ for which $M_{f^{[2]}}$ does not map $S_2 \times S_2$ to $S_1$ boundedly. Outside of $[-1, 1]$, this function is given by $f(s) = s \vert s\vert$, and it is $C^2$ inside  $[-1, 1]$. Such functions are generalised versions of the absolute value map and have played an important role in perturbation theory ever since the results of Kato~\cite{Kato} and Davies~\cite{Davies} on Lipschitz properties of the absolute value map. A weak type estimate for generalised absolute value maps was obtained in~\cite{CSZ-Israel}.  

We use the generalised absolute value function to provide lower bounds of bilinear Schur multipliers in the following way. Note that since this function is not $C^2$, we make sense of the second order derivative as a weak derivative.  
\begin{theoremx}\label{Thm=TheoremB_intro}
    Let  $f(s) = s \vert s\vert, s \in \mathbb{R}$. For every $1 < p < \infty$, we have  
\begin{equation}\label{Eqn=IntroMf}
\Vert M_{f^{[2]}}: S_{2p} \times S_{2p} \rightarrow S_{p}  \Vert \gtrsim  p^2 p^\ast.
\end{equation}
\end{theoremx}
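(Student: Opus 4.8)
The plan is to produce, for each $1<p<\infty$, an explicit pair of test matrices $A,B\in S_{2p}$ (depending on a dimension parameter $N\to\infty$) for which the ratio $\Vert M_{f^{[2]}}(A,B)\Vert_{S_p}/(\Vert A\Vert_{S_{2p}}\Vert B\Vert_{S_{2p}})$ is bounded below by a constant times $p^2p^\ast$. First I would compute the symbol: for $f(s)=s|s|$ one has $f''(s)=2\,\mathrm{sign}(s)$ (as a weak derivative), and the divided difference $f^{[2]}(\lambda_0,\lambda_1,\lambda_2)$ can be written down explicitly on the three sign-octants of $\R^3$; it is a bounded function, piecewise a ratio of the elementary symmetric-type expressions in $\lambda_0,\lambda_1,\lambda_2$, and the only place it fails to be smooth is where two coordinates have opposite signs. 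The natural discretization is to sample $\lambda$'s from a symmetric set of real points, say $\{\pm 1,\pm 2,\dots\}$ or a geometric grid $\{\pm r^k\}$, split into a "positive block" and a "negative block"; on the diagonal blocks $f^{[2]}$ is essentially zero or smooth and harmless, while the genuinely singular behaviour is captured by the off-diagonal blocks where one index sits in the positive part and another in the negative part. On such a mixed-sign region $f^{[2]}$ degenerates to something like $\pm\frac{2}{\lambda_0-\lambda_2}$ or $\pm\frac{2}{(\lambda_0-\lambda_2)}\cdot(\text{sign pattern})$, i.e. a Hilbert-matrix / triangular-truncation type kernel in one of the three variables with the third variable frozen.

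The key step is then to recognize that the bilinear Schur multiplier, restricted to these mixed-sign blocks and with one of the two arguments chosen to be a rank-one "delta" matrix (so that the bilinear map collapses to a linear Schur multiplier in the remaining variable), reproduces — up to bounded factors — either the triangular truncation projection $\mathcal T$ on $S_p$, or the Hilbert matrix Schur multiplier, acting on the Schatten class. Both of these are classical: the norm of the triangular truncation $\mathcal T:S_p\to S_p$ grows like $\min(p,p^\ast)$ for one factor, and composing/iterating the decomposition~\eqref{Eqn=IntroDec} in reverse, i.e. using that $f^{[2]}$ contains a product of two such singular one-variable factors (one "Toeplitz" factor $\frac{\lambda_0-\lambda_1}{\lambda_0-\lambda_2}$ of triangular-truncation type, and one factor coming from the singular linear divided difference $f^{[1]}$ of $s\mapsto|s|$, which is itself of triangular-truncation type), one expects the true growth to be a product of \emph{three} such linear lower bounds — matching the claimed $p^2p^\ast$. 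Concretely I would: (i) pick $A$ a Hankel- or Toeplitz-type matrix supported on the positive$\times$negative block whose $S_{2p}$ norm I can estimate by hand; (ii) pick $B$ similarly, or a rank-one perturbation thereof, on the negative$\times$positive block; (iii) evaluate $M_{f^{[2]}}(A,B)$ and show its $S_p$ norm picks up the product of the relevant triangular-truncation norms; (iv) optimize the geometry of the grid and the exponents appearing in $A,B$ in terms of $p$.

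To make the lower bounds on the linear pieces explicit with the correct $p$-dependence, I would lean on the known sharp asymptotics for the triangular truncation on $S_p$ (the Macaev–Kwapień–Pełczyński / Gohberg–Krein result, refined with the right constant) applied along a sequence of growing finite blocks, rather than reproving it. The bilinear character is handled by the "freeze one variable" trick: choosing the frozen argument to be a normalized rank-one projection turns each bilinear evaluation into a linear Schur multiplier whose symbol is a slice of $f^{[2]}$, and one then lower-bounds that linear multiplier. Stacking two independent such slices (one in each of the remaining two directions) — which is possible because the $3$-variable kernel $f^{[2]}$ on a triple mixed-sign octant factors as a product of two essentially-independent singular one-variable kernels times a bounded piece — is what upgrades $\min(p,p^\ast)^{?}$ to the order $p^2p^\ast$; for $S_{2p}\times S_{2p}\to S_p$ with $p\to\infty$ the relevant truncation norms on the $S_{2p}$ inputs grow like $2p$ each and on the $S_p$ output like $p$, heuristically giving $\sim p\cdot p\cdot p=p^3$, but the correct bookkeeping of which factor lands in which Schatten exponent yields the stated $p^2p^\ast$ (in particular the $p^\ast$ survives as $p\searrow1$, matching the upper-bound discussion after Theorem~\ref{Thm=TheoremA_intro} since $(2p)^\ast$ stays bounded and only the output exponent $p$ contributes a $p^\ast$).

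The main obstacle I anticipate is step (iii)–(iv): getting the \emph{product} of three linear lower bounds rather than just one, i.e. exhibiting test matrices for which the two singular one-variable factors inside $f^{[2]}$ act "in series" without one killing the gain of the other, and doing so while keeping the $S_{2p}$-norms of $A$ and $B$ under precise control. This requires a careful, matched choice of the real grid and of the matrix entries so that $A$ is roughly an eigenvector (in the appropriate operator sense) for the first singular factor and $M_{f^{[2]}}(A,\cdot)$ is roughly an eigenvector for the second, and then verifying the Schatten-norm estimates on the explicit (Hilbert/Hankel-type) matrices involved — where the sharp constants, and not just the order, must be tracked to land exactly on $p^2p^\ast$.
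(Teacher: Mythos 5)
Your intuition — that the singularity of $f^{[2]}$ for $f(s)=s|s|$ lives on mixed-sign configurations, that there it degenerates to triangular-truncation-type kernels, and that the Gohberg--Krein lower bound for the truncation should be imported rather than reproved — is exactly the right starting point, and it is also the paper's. But the central mechanism you propose for extracting the $p$-dependence fails. Freezing one argument to a (normalized rank-one) matrix collapses $M_{f^{[2]}}$ to a \emph{linear} Schur multiplier whose symbol is a two-variable slice of $f^{[2]}$, and any lower bound obtained this way is capped by the norm of that linear multiplier, which is $O(pp^\ast)$ (the slices satisfy the H\"ormander--Mikhlin--Schur condition of Theorem~\ref{thrm: CGPT}). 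This is precisely the reduction of~\cite{CLPST} that Remark~\ref{Rmk=Comparison} singles out as too lossy: it can never produce the $p^2$ growth as $p\to\infty$. Your suggestion to ``stack two independent slices'' does not repair this, because the two freezings cannot be performed simultaneously: once one argument is rank one, the remaining slice is a single linear multiplier and only one truncation factor is visible; no choice of Hankel/Toeplitz test matrices with a frozen argument sees a product of two (let alone three) truncation norms. In addition, your quantitative input is off: the triangular truncation on $S_p$ grows like $\max(p,p^\ast)\approx pp^\ast$, not $\min(p,p^\ast)$ (which is bounded by $2$), and the heuristic $p\cdot p\cdot p$ with three simultaneous factors has no mechanism behind it.

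What the paper does instead — and what your plan is missing — is a way to keep \emph{both} bilinear arguments active at once. For the $p^2$ regime it restricts $f^{[2]}$ to the geometric grid $\phi_k(i,j,l)=(q^{ki},-q^{kj},q^{kl})$ (legitimate by the restriction argument of Lemma~\ref{Lem=Discretization}, based on \cite[Theorem 2.2]{CKV}) and shows in Lemma~\ref{Lem=Limit} that as $k\to\infty$ the sampled symbol converges entrywise to a product of two signed triangular-truncation symbols, one in $(\lambda_0,\lambda_1)$ and one in $(\lambda_1,\lambda_2)$. Consequently $M_{f^{[2]}\circ\phi_k}\bigl((1-P)(x^\ast),(1-P)(x)\bigr)\to M^+(x)^\ast M^+(x)$, and choosing $x$ nearly attaining the truncation norm on $S_{2p}$ gives $\Vert M^+(x)\Vert_{2p}^2\gtrsim p^2$: two factors of $p$, both coming from the $S_{2p}$ inputs, with no reduction to a linear multiplier. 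The $p^\ast$ term requires a second, different grid $(q^{ki},q^{k(i+l)},-q^{kl})$, on which the bilinear multiplier degenerates to a single truncation of the product $yx\in S_p$, and the two regimes combine because $p^2p^\ast\approx\max(p^2,p^\ast)$. Without an analogue of this simultaneous factorization in the limit, your construction stalls at $O(pp^\ast)$ and does not prove the theorem.
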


  Our proof method is as follows. For Schur multipliers whose   symbol is continuous on an open subset $\Omega \subseteq \mathbb{R}^2$, restricting the symbol to any discrete subset $X \times Y \subseteq \Omega$ yields a new Schur multiplier whose norm is not larger than the norm of the original Schur multiplier,   see \cite[Theorem 1.19]{LafforgueDelaSalle} for this restriction theorem. Further,  Davies (see \cite[Lemma 10]{Davies}) showed that one can approximate the triangular truncation map by restrictions of the divided difference function of the usual absolute value map to discrete sets.  Here we show that also for the second order divided difference function of the generalised absolute value map, we can find restrictions to discrete sets that approximate the triangular truncation map.   
 In turn, sharp lower bounds for the triangular truncation map are known and go  back to Krein's  analysis of singular values of the Volterra operator \cite{GohbergKrein}. By combining these ideas, we are able to find good lower bounds for our bilinear Schur multipliers of second order divided difference functions. Remarkably, we obtain a square power $p^2$ for the asymptotics $p \rightarrow \infty$ and a linear term $p^\ast$ for $p \searrow 1$.

Theorem~\ref{Thm=TheoremB_intro} closely relates to the main result of~\cite{CLPST}; in fact  it implies a mild variation of the main theorem of~\cite{CLPST}.  
In Remark~\ref{Rmk=Comparison} we conceptually compare our proof to~\cite{CLPST} and argue that it gives a fundamentally better lower bound than what the method from~\cite{CLPST} would give.  

Note in particular that for $p \searrow 1$, Theorems~\ref{Thm=TheoremA_intro} and~\ref{Thm=TheoremB_intro} yield that the asymptotics of the norm of~\eqref{Eqn=IntroMf} for general $f$ are precisely of order $O(p^\ast)$.  The asymptotics for $p \rightarrow \infty$ are narrowed down to an order between $O(p^2)$ and $O(p^4)$, and both the lower and upper bounds we find here are fundamentally better than what was previously known. 

\vspace{0.3cm}

\noindent {\bf Structure of the paper.} Section~\ref{Sect=Preliminaries} contains preliminaries on Schur multipliers and Calder\'on-Zygmund operators. In Section~\ref{sect: decomposition fn}, we present a decomposition of the Schur multiplier of second order divided difference functions into linear terms and bilinear Toeplitz form terms. Their boundedness is shown in Section~\ref{sect: linear_bddness_hms} (linear terms) and  Sections~\ref{Sect=S1Estimate} and ~\ref{Sect=Bilinear} (bilinear terms). In Section~\ref{sect: proof_thrm_A}, we prove Theorem~\ref{Thm=TheoremA_intro}, as well as an additional extrapolation result. Theorem~\ref{Thm=TheoremB_intro} is proven in Section~\ref{Sect=LowerBound}. In Appendix~\ref{Sect=AppendixConstants} we have incorporated all arguments that are needed to obtain the explicit constants of Theorem~\ref{thrm: diplinio_statement_bilinear}; this essentially requires a careful analysis of the proofs in~\cite{DiPlinioMathAnn} and references given there. We decided to give full details here as this contributes directly to our main result. 

\vspace{0.3cm}
 
\noindent {\bf Acknowledgement.} The authors wish to thank the anonymous referee for a detailed report that led to several improvements of our manuscript. 

\section{Preliminaries}\label{Sect=Preliminaries}
We recall the following preliminaries, for which we refer to~\cite{SkripkaTomskova} for multilinear operator integrals, to~\cite{GrafakosOldNew} for harmonic analysis, and to~\cite{GrafakosTorres} for (scalar-valued) multilinear Calder\'on-Zygmund theory. 

\subsection{General notation}
We let the natural numbers $\mathbb{N}$ be all integer numbers greater than or equal to $1$. 
We shall write $A \lesssim B$ for saying that expression $A$ is always smaller than~$B$ up to an absolute constant, and $A\approx B$ for $A\lesssim B \lesssim A$. We write $f=O(g)$ if we have $|f(\lambda)|\lesssim g(\lambda)$ as~$\lambda$ approaches some specified limit (usually $\lambda \to\infty$).  For $f \in C^n(\mathbb{R})$ we let $f^{(n)}$ denote the $n$-th order derivative. We call a function \emph{smooth} if it is a $C^{\infty}$-function on its domain.  For a continuous function $f:D\to\mathbb{C}$, $D\subseteq\mathbb{R}^d$, we define its \emph{support} to be the closure of the set $\{x\in D\mid f(x)\ne 0\}$. We let $C_c(\mathbb{R})$ denote the continuous functions on $\mathbb{R}$ with compact support.  The Fourier transform of a Schwartz function $m$ is defined as

\begin{equation}\label{Eqn=FourierTransform}
(\mathcal{F}m)(x) =    (2\pi)^{-\frac{d}{2}} \int_{\mathbb{R}^d} m(\xi)e^{-i \xi \cdot x} d\xi.
\end{equation}
   We extend $\mathcal{F}$ in the usual way to the space of tempered distributions.
   
   For $p \in (1, \infty)$ we set~$p^\ast = p/(p-1)$, which is the H\"older conjugate of $p$.
The set $\Delta \subseteq \mathbb{R}^{3d}$ is the set of diagonal elements $(\lambda, \lambda, \lambda)$, $\lambda \in \mathbb{R}^d$; we shall often require this only for $d=1$. We use  notations like $\{ \lambda = \mu \}$ to denote the set  $\{ (\lambda, \mu) \in \mathbb{R}^2 \mid \lambda = \mu\}$.
 The Euclidean norm of a vector~$\xi \in \mathbb{R}^d$ is denoted by $\vert \xi \vert = (\sum_{i=}^d \vert \xi_i \vert^2)^{\frac{1}{2}}$.

We call a function $\varphi: \mathbb{R}^d \setminus \{0 \} \rightarrow \mathbb{C}$ \emph{homogeneous} if it is homogeneous of order $0$, i.e.\ if  
for every $r > 0$, $\xi \in \mathbb{R}^d \setminus \{0 \}$ we have $\varphi(r \xi) = \varphi(\xi)$. Moreover, $\varphi$ is called \emph{even} if $\varphi(-\xi) = \varphi(\xi)$ and \emph{odd} if $\varphi(-\xi) = -\varphi(\xi)$. We may define a function $\mathbb{R}^d \setminus \Delta \rightarrow \mathbb{C}$ to be homogeneous, even, and odd with precisely the same definitions.

 \subsection{Function spaces} We let $C_b(\mathbb{R})$ denote the complex valued continuous bounded functions on $\mathbb{R}$. Furthermore, we let $\Lloc(\mathbb{R}^d)$  denote the  locally integrable functions $\mathbb{R}^d \rightarrow \mathbb{C}$. 
 The Banach space of $p$-integrable functions $\mathbb{R}^d \rightarrow \mathbb{C}$ with norm ${\Vert f \Vert_p = (\int_{\mathbb{R}^d} \vert f(x)\vert^p dx)^{\frac{1}{p}}}$ is denoted by $L^p(\mathbb{R}^d)$.

\subsection{Schatten classes} For $p \in [1, \infty)$, $S_p(\mathbb{R}^d)$ denotes the Schatten $p$-class of $B(L^2(\mathbb{R}^d))$, consisting of all compact operators
$x \in B(L^2(\mathbb{R}^d))$ for which $\Vert x \Vert_p = {\rm Tr}(\vert x \vert^p)^{1/p}$ is finite. Furthermore, $S_\infty(\mathbb{R}^d)$ denotes the compact operators in   $B(L^2(\mathbb{R}^d))$. For $p = 2$ we may identify~$S_2(\mathbb{R}^d)$ linearly with $L^2(\mathbb{R}^d \times  \mathbb{R}^d)$. This way, a kernel $A \in L^2(\mathbb{R}^d \times \mathbb{R}^d)$ corresponds to the operator~$(A \xi)(t) = \int_{ \mathbb{R}^d } A(t,s) \xi(s) ds$ in $S_2(\mathbb{R}^d)$. We shall mostly be concerned with $d=1$ and write~$S_p = S_p(\mathbb{R})$.  Note that for $1<p<\infty$, the dual space of $S_p$ is $S_{p^*}$, where $p^*$ is the H\"older conjugate of $p$.

\subsection{Schur multipliers}\label{Sect=SchurPrelim}
For $m \in L^\infty(\mathbb{R}^{2d})$, the multiplication map $M_m: A \mapsto m A$ acts boundedly on  $L^2(\mathbb{R}^d \times \mathbb{R}^d)$ and hence on $S_2(\mathbb{R}^d)$. Now let us consider $d=1$ and introduce multilinear Schur multipliers as follows. Let $m \in L^\infty(\mathbb{R}^{n+1})$. Then by~\cite[Proposition 5]{CLS-AIF} there exists a unique bounded linear map
\[
M_m: S_2 \times \ldots \times S_2 \rightarrow S_2: (A_1, \ldots, A_n) \mapsto M_m(A_1, \ldots, A_n),
\]
where the kernel of $M_m(A_1, \ldots, A_n)$ is given by
\[
M_m(A_1, \ldots, A_n)(s_0,s_n) =    \int_{\mathbb{R}^{n-1}}   m(s_0, \ldots, s_n) A_1(s_0, s_1) \ldots A_n(s_{n-1}, s_n) ds_1 \ldots ds_{n-1}, \:\: s_0, s_n \in \mathbb{R}.
\]
Moreover, this map is bounded by $\Vert m \Vert_\infty$; this follows from the Cauchy-Schwartz inequality as observed in~\cite[Proposition 5]{CLS-AIF}.
{We recall that in the linear case the following elementary (in)equalities for $1 < p < \infty$ hold
\begin{equation}\label{Eqn=SchurElement}
\begin{split}
\Vert M_{m}: S_{p} \rightarrow   S_{p}  \Vert =& 
\Vert M_{m}: S_{p^\ast} \rightarrow   S_{p^\ast}  \Vert, \\
  \Vert M_{m}: S_{2} \rightarrow   S_{2} \Vert  \leq &
\Vert M_{m}: S_{p}  \rightarrow   S_{p}  \Vert, \\ 
 \Vert M_{m}: S_{2}    \rightarrow   S_{2} \Vert = & \Vert m \Vert_{L^\infty(\mathbb{R}^2)}.
\end{split}
\end{equation}
where the first equality follows from duality, the second from complex interpolation between $p$ and $p^\ast$, and the last from the fact that we identified $S_2$ with $L^2(\mathbb{R} \times \mathbb{R})$ on which $m$ acts as a multiplication operator. }
 
We may similarly define Schur multipliers on discrete sets as follows.  Let $X$ be any set and let~$\ell^2(X)$ be the Hilbert space of square summable functions on $X$. For $p \in [1, \infty)$, let $S_p(\ell^2(X))$ be the Schatten $S_p$-space of $B(\ell^2(X))$ consisting of all  operators $x \in B(\ell^2(X))$ for which the norm {$\Vert x \Vert_p := {\rm Tr}(\vert x \vert^p)^{1/p}$} is finite.  
For $x \in X$, let $p_x$ be the orthogonal projection onto the span of $\delta_x \in \ell^2(X)$,   where $\delta_x(x)=1$ and $\delta_x(y)=0$ for $y\neq x$.  Let $m \in \ell^\infty(X \times X \times X)$ and consider the 
bilinear Schur multiplier 
   \begin{equation}\label{Eqn=DiscMult}
\begin{split}
M_{m}: S_{2}(\ell^2(X)) \times S_{2}(\ell^2(X) ) \rightarrow & S_{2}(\ell^2(X)) \\
(x, y) \mapsto & \sum_{\lambda_1, \lambda_2, \lambda_3 \in X}  m(\lambda_1, \lambda_2, \lambda_3) p_{\lambda_1} x p_{\lambda_2} y p_{\lambda_3}.
\end{split}
\end{equation} 
As before, this map is bounded by   $\Vert m \Vert_\infty$, see \cite[Proposition 5]{CLS-AIF}.  

For  sets $F, G$ consider the disjoint union $X = F \cup G$ and let $p_F$ and $p_G$ be the orthogonal projections of  $\ell^2(X)$ onto $\ell^2(F)$ and $\ell^2(G)$ respectively. Define $S_p(\ell^2(F), \ell^2(G)) = p_F S_p(\ell^2(X), \ell^2(X)) p_G$. Then by \eqref{Eqn=DiscMult} we see that $M_m$ maps $S_2(\ell^2(F), \ell^2(G)) \times S_2(\ell^2(G), \ell^2(F))$ to $S_2(\ell^2(F), \ell^2(F))$. 

In either the continuous or discrete case, let $1 \leq p, p_1, \ldots, p_n   < \infty$ with $p^{-1} = \sum_{i=1}^n p_i^{-1}$. We may consider the restriction of $M_m$ where its $i$-th inputs are restricted to the space $S_2 \cap S_{p_i}$. If this restriction takes values in $S_p$ and has a bounded multilinear extension to $S_{p_1} \times \ldots \times S_{p_n}$,   then this extension, also denoted by $M_m$, is called an $(p_1, \ldots, p_n)$-Schur multiplier. In the discrete case, our terminology is the same but with  $S_r$ replaced by $S_r(\ell^2(X))$.

\subsection{Divided difference functions}\label{subsec: prelim_divdiff}

\begin{definition}[Divided difference functions]\label{def: divdiff}
    Let $f\in C^n(\mathbb{R})$, $n\in\mathbb{N}_0$. We define the \textit{n-th order divided difference function} $f^{[n]}$ of $f$ inductively as follows. The first order divided difference function is constructed as  $f^{[0]}(\lambda_0) \;:=\; f(\lambda_0)$. Then we set
     \begin{align}
        f^{[n]}(\lambda_0,...,\lambda_n)&\;:=\; \begin{cases}
            \frac{f^{[n-1]}(\lambda_0,\dotsc,\lambda_{j-1},\lambda_{j+1},\dotsc,\lambda_{n})-f^{[n-1]}(\lambda_0,\dotsc,\lambda_{i-1},\lambda_{i+1},\dotsc,\lambda_n)}{\lambda_i-\lambda_j}, & \text{if }\lambda_i\ne\lambda_j \text{ for some } i\ne j, \\
            \frac{f^{(n)}(\lambda_0)}{n!}, & \lambda_0=\dotsc=\lambda_n,
        \end{cases}\label{eqn: def_divdiff}
    \end{align}
    where $\lambda_0,\dotsc,\lambda_n\in\mathbb{R}$.  
For $\lambda,\mu\in\mathbb{R}$, we set \begin{equation*}
    f^{[n]}(\lambda^{(k)},\mu^{(n+1-k)}):=f^{[n]}(\underbrace{\lambda,\dotsc,\lambda}_\text{$k$ times},\underbrace{\mu,\dotsc,\mu}_{\substack{n+1-k\\ \text{times}}}).
\end{equation*}
\end{definition}
We shall use repeatedly that divided difference functions are invariant under permutation of the variables, which can be checked by induction from its definition (or see~\cite{Devore}). 

\begin{remark}
For $n=2$ and $f(\lambda) = \lambda \vert \lambda \vert$ we define $f^{[2]}$ in the same way as in Definition~\ref{def: divdiff}, except that we set $f^{[2]} (\lambda, \lambda, \lambda) = 0$. Note that this alternative definition is required, as $f$ is not a $C^2$-function.
\end{remark}
\begin{remark}
We have from e.g.~\cite[Lemma 5.1]{PSS-Inventiones} that
\begin{equation}\label{Eqn=DiffDiff}
\Vert f^{[n]} \Vert_\infty \leq \frac{\Vert f^{(n)} \Vert_\infty }{n!}.
\end{equation}
\end{remark}

\subsection{Fourier multipliers and Calder\'on-Zygmund operators}\label{subsec: fm_czo}  In analogy to the linear definition, we define a bilinear \emph{Fourier multiplier} with symbol $m\in L^{\infty}(\mathbb{R}^{d}\times \mathbb{R}^{d})$ as follows. For Schwartz functions $f_1, f_2$ on $\mathbb{R}^d$, we set  
\begin{equation*}
    T_m(f_1,f_2)(x):=\frac{1}{(2\pi)^{d}}\int_{\mathbb{R}^{d}\times \mathbb{R}^d}m(\xi_1,\xi_2)(\mathcal{F}f_1)(\xi_1)(\mathcal{F}f_2)(\xi_2)e^{i(\xi_1+\xi_2)\cdot x}d\xi.
\end{equation*}
Note that as $\mathcal{F}f_1$ and $\mathcal{F}f_2$ are Schwartz, the integral is over an integrable function and hence this formula is well-defined. 

We recall the following from~\cite{DiPlinioMathAnn}, which we need only for $d=1$. 
Let $T$ be an bilinear operator defined by an integral kernel, i.e.\ there exists a function $K:\mathbb{R}^{3d}\setminus\Delta\to \mathbb{C}$ such that for compactly supported bounded measurable functions $f_1, f_2 \in L^\infty_c(\mathbb{R}^d)$, 
\[
\langle T(f_1,f_2),f_{3}\rangle = \int_{\mathbb{R}^{3d}}K(x_{3},x_1,x_{2})\prod_{j=1}^{3}f_j(x_j)dx
\]
whenever $\mathrm{supp}f_i\cap\mathrm{supp}f_j=\emptyset$ for some $i\ne j$. Such an operator $T$ is called a \emph{Calder\'on-Zygmund operator} if there exists some $\alpha\in(0,1]$ and $C_{K}>0$ such that the following conditions hold:
    \begin{itemize}
        \item (Size condition) for all $x=(x_1,x_2,x_3)\in\mathbb{R}^{3d}\setminus\Delta$, $$|K(x)|\le\frac{C_K}{(|x_1-x_2|+|x_1-x_3|)^{2d}},$$ 
        \item (Smoothness condition) for all $j=1,2,3$, $$|K(x)-K(x')|\le\frac{C_K|x_j-x_j'|^{\alpha}}{(|x_1-x_2|+|x_1-x_3|)^{2d+\alpha}}$$ holds whenever $x,x'\in\mathbb{R}^{3d}\setminus\Delta$ such that $x_i=x_i'$ for $i\ne j$ and $$2|x_j-x_j'|\le\max(|x_1-x_2|,|x_1-x_3|),$$
        \item (Boundedness) for some (equivalently, for all) exponents $p_1,p_2\in(1,\infty)$ and $q_{3}\in(1/2,\infty)$ such that $1/p_1+1/p_2=1/q_{3}$, $$\|T(f_1,f_2)\|_{L^{q_{3}}(\mathbb{R}^d)}\lesssim \|f_1\|_{L^{p_1}(\mathbb{R}^d)}\|f_2\|_{L^{p_2}(\mathbb{R}^d)}.$$ 
    \end{itemize}

\section{Decomposing second order divided difference functions}\label{sect: decomposition fn}
The aim of this section is to show that the bilinear Schur multiplier of the second order divided difference function $f^{[2]}$ admits a decomposition as sums of compositions of bilinear Schur multipliers that are independent of~$f$ and of Toeplitz form as well as linear Schur multipliers. Such decompositions appear already in~\cite{PSS-Inventiones}, but we require a different decomposition that allows us to incorporate the application of triangular truncations into the bilinear part.

  Let $\epsilon > 0$ be small and fixed. Define the sets
\[
\begin{split}
A_{1, \epsilon}   =  &     (-2\epsilon, \pi/2+2\epsilon) \cup (\pi-2\epsilon, 3\pi/2+2\epsilon),\\
A_{2, \epsilon}   =  &  (\pi/2+\epsilon, 3 \pi/4+\epsilon) \cup ( 3\pi/2 + \epsilon, 7 \pi/4 + \epsilon),\\
A_{3, \epsilon}     =&   (3\pi/4 - \epsilon,  \pi - \epsilon) \cup (7 \pi/4 - \epsilon, 2 \pi - \epsilon).
\end{split}
\]
For a point $\xi=(\xi_1, \xi_2) \in \mathbb{R}^2 \setminus \{ 0 \}$ and $A \subseteq \mathbb{R}$ we say~$\argu (\xi_1, \xi_2) \in A$ in case there exists $\theta \in A$ such that $\xi = (\cos(\theta), \sin(\theta))$.
We cut $\mathbb{R}^2 \setminus \{ 0 \}$ into the following areas:
\begin{equation} \label{Eqn=Regions}
\begin{split}
\Delta_{1, \epsilon} = & \{ (\xi_1, \xi_2) \in \mathbb{R}^2 \setminus \{ 0 \} \mid    \argu(\xi_1, \xi_2)  \in A_{1, \epsilon} \}, \\
\Delta_{2, \epsilon} = & \{ (\xi_1, \xi_2) \in \mathbb{R}^2 \setminus \{ 0 \} \mid    \argu(\xi_1, \xi_2)  \in A_{2, \epsilon}  \}, \\
\Delta_{3, \epsilon} = & \{ (\xi_1, \xi_2) \in \mathbb{R}^2 \setminus \{ 0 \} \mid    \argu(\xi_1, \xi_2)  \in  A_{3, \epsilon}    \}.
\end{split}
\end{equation}
All these sets are radial in the sense that if $\xi \in \Delta_{j, \epsilon}$ then $r \xi \in  \Delta_{j, \epsilon}$ for any $r >0$. All $\Delta_{j, \epsilon}$ are open and satisfy  $-\Delta_{j, \epsilon} = \Delta_{j, \epsilon}$.  Further, the sets $\Delta_{j, \epsilon}$, $j =1,2,3$ cover  $\mathbb{R}^2 \setminus \{ 0 \}$. See Figure~\ref{fig:deltas} for an illustration.

\begin{figure}[h]
    \centering
    \includegraphics[scale=0.5]{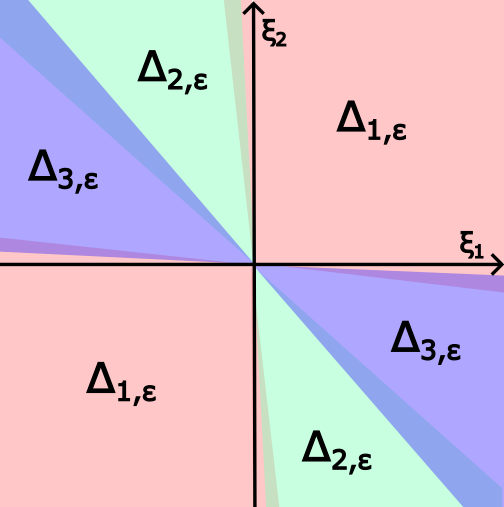}
    \caption{The sets $\Delta_{i,\varepsilon}$ as defined in~\eqref{Eqn=Regions}. Note that the sets are partially overlapping.}
    \label{fig:deltas}
\end{figure}

Let $\mathbb{T} \subseteq \mathbb{R}^2$ be the unit circle. Let  $\theta_1', \theta_2', \theta_3': \mathbb{T}  \rightarrow [0,1]$  be a partition of unity of the sets~$\Delta_{j, \epsilon} \cap \mathbb{T}$, $j = 1,2,3$. We may assume without loss of generality that the support of $\theta_j'$ is contained in $\Delta_{j, \epsilon} \cap \mathbb{T}$.  Furthermore, we may replace $\theta_j'(\xi)$, $\xi \in \mathbb{T}$, by    $\frac{1}{2}(\theta_j'(\xi) + \theta_j'(-\xi))$ and may therefore assume without loss of generality that    $\theta_j'(\xi) = \theta_j'(-\xi)$. Set $\theta_j(\xi) := \theta_j'(\xi \slash \vert \xi \vert)$ for~$\xi \in \mathbb{R}^2 \backslash \{ 0 \}$. Then 
   $\theta_1, \theta_2, \theta_3: \mathbb{R}^2 \setminus \{ 0 \} \rightarrow [0,1]$ are smooth, even, homogeneous functions such that $\theta_1 + \theta_2 + \theta_3 = 1$ on $\mathbb{R}^2 \setminus \{ 0 \}$ and such that the support of $\theta_j$ is contained in $\Delta_{j, \epsilon}$.

 Let
 \begin{equation}\label{eqn=theta_tilde}
 \widetilde{\theta}_j(\lambda_0, \lambda_1, \lambda_2) = \theta_j( \lambda_1 - \lambda_0, \lambda_2 - \lambda_1 ), \quad  (\lambda_1, \lambda_2, \lambda_3) \in \mathbb{R}^3 \setminus \Delta,
 \end{equation}
 where we recall $\Delta = \{ (\lambda, \lambda, \lambda) \mid \lambda \in \mathbb{R} \}$.
We  obtain for $(\lambda_0,\lambda_1, \lambda_2) \in \mathbb{R}^3 \setminus \Delta$ and $f \in C^2(\mathbb{R})$  that
\begin{equation}\label{Eqn=ThetaSum}
f^{[2]}(\lambda_0, \lambda_1, \lambda_2) = \sum_{j=1}^3 \widetilde{\theta}_j(\lambda_0, \lambda_1,   \lambda_2) f^{[2]}(\lambda_0, \lambda_1, \lambda_2).
\end{equation}
We shall now decompose each of these three summands. A general decomposition method can be found in~\cite[Lemma 5.8]{PSS-Inventiones}; however, in the special case of divided difference functions, both the statement and the proof are more straightforward in our version below.

\begin{lemma}\label{Lem=Decomposition}
Let $f \in C^n(\mathbb{R})$, $n \geq 1$ and let $\lambda_0, \ldots, \lambda_n \in \mathbb{R}$ be such that for some $i,j \in \{ 0, \ldots, n \}$ we have $\lambda_i \not = \lambda_j$. Let $\mu \in \mathbb{R}$. Then,
\[
\begin{split}
f^{[n]}(\lambda_0, \ldots, \lambda_n) &=  \frac{\lambda_i - \mu}{ \lambda_i - \lambda_j} f^{[n]}(\lambda_0, \ldots, \lambda_{j-1}, \mu, \lambda_{j+1}, \ldots, \lambda_n   ) 
\\
&\;+ \frac{\mu - \lambda_j}{ \lambda_i - \lambda_j} f^{[n]}(\lambda_0, \ldots, \lambda_{i-1}, \mu, \lambda_{i+1}, \ldots, \lambda_n   ).
\end{split}
\]
\end{lemma}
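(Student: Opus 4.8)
The plan is to prove the identity by a direct computation using the defining recursion~\eqref{eqn: def_divdiff} for divided difference functions, together with the permutation invariance that was noted just after Definition~\ref{def: divdiff}. First I would reduce to a normalized situation: since $f^{[n]}$ is invariant under permutations of its arguments, I may reorder the variables so that the two distinguished indices are, say, $i$ and $j$ in convenient positions; concretely it suffices to treat the case where we single out two slots and think of the remaining $n-1$ arguments as "frozen" parameters. So I would fix the $n-1$ values $\lambda_k$ for $k \neq i,j$, and regard both sides as functions of the pair $(\lambda_i,\lambda_j)$ together with the auxiliary point $\mu$.

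The key step is then the following. Write $D(a,b)$ for $f^{[n]}$ evaluated with $a$ in slot $i$, $b$ in slot $j$, and the frozen values elsewhere; similarly the recursion~\eqref{eqn: def_divdiff}, applied with the pair of unequal arguments being precisely the two values in slots $i$ and $j$, reads
\[
D(a,b) \;=\; \frac{E(a) - E(b)}{a - b},
\]
where $E(c)$ denotes the $(n-1)$-st divided difference with slot $i$ and slot $j$ collapsed, the common value being $c$, and the other $n-1$ arguments frozen — this is exactly $f^{[n-1]}$ with one of the two indices deleted and the surviving one set to $c$. (Here I use that when $a\neq b$ the recursion does not depend on which unequal pair one picks, which is part of the well-definedness of $f^{[n]}$.) Now the claimed identity becomes the elementary scalar statement
\[
\frac{E(a)-E(b)}{a-b} \;=\; \frac{a-\mu}{a-b}\cdot\frac{E(\mu)-E(b)}{\mu - b} \;+\; \frac{\mu - b}{a-b}\cdot\frac{E(a)-E(\mu)}{a-\mu},
\]
after one observes that $f^{[n]}(\ldots,\mu,\ldots)$ with $\mu$ in slot $j$ equals $\dfrac{E(\lambda_i)-E(\mu)}{\lambda_i-\mu}$ by the same recursion (and symmetrically for $\mu$ in slot $i$). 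The displayed scalar identity is verified by clearing denominators: the right-hand side equals $\dfrac{(a-\mu)(E(\mu)-E(b)) + (\mu-b)(E(a)-E(\mu))}{(a-b)(\mu-b) \cdot \text{[after common denominator]}}$, and expanding the numerator the $E(\mu)$ terms cancel, leaving $(a-b)(E(a)-E(b))$ over $(a-b)$ — wait, more carefully, after putting everything over the common denominator $(a-b)$ one gets exactly $E(a)-E(b)$ in the numerator. I would just present this cancellation cleanly.

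The main point requiring care — the "obstacle", though it is mild — is handling the degenerate cases where some of the auxiliary difference quotients have coinciding arguments, e.g.\ $\mu = \lambda_i$ or $\mu = \lambda_j$, or where after substituting $\mu$ the resulting divided difference falls into the diagonal branch of~\eqref{eqn: def_divdiff}. In those cases the difference-quotient manipulation above must be replaced by a limiting/continuity argument: one uses that $f\in C^n$ makes $f^{[n]}$ (and $f^{[n-1]}$, hence $E$) continuous, so both sides of the asserted identity are continuous in $\mu$ (and in the $\lambda_k$'s) on the relevant domain, and the identity, being valid on the dense open set where all the quotients are genuine, extends by continuity. I would state this continuity fact once (citing~\cite{Devore} or the remark after Definition~\ref{def: divdiff}) and invoke it to dispose of all boundary cases simultaneously, rather than checking each degenerate configuration by hand. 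If $\mu$ equals one of the frozen $\lambda_k$ there is nothing special, and if $\lambda_i=\lambda_j$ the hypothesis excludes it, so continuity in $\mu$ alone suffices.
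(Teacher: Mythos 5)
Your proposal follows essentially the same route as the paper: reduce by permutation invariance to the two distinguished slots, apply the defining recursion so that $f^{[n]}$ becomes the difference quotient $\tfrac{E(a)-E(b)}{a-b}$ of the $(n-1)$-st divided difference $E$ with the other arguments frozen, and add and subtract $E(\mu)$ (your ``telescoping''). However, the displayed scalar identity you write is false as stated: you have paired the coefficient $\tfrac{a-\mu}{a-b}$ with the quotient $\tfrac{E(\mu)-E(b)}{\mu-b}$ and the coefficient $\tfrac{\mu-b}{a-b}$ with $\tfrac{E(a)-E(\mu)}{a-\mu}$, whereas the lemma (and your own preceding sentence, which correctly identifies $f^{[n]}$ with $\mu$ in slot $j$ as $\tfrac{E(\lambda_i)-E(\mu)}{\lambda_i-\mu}$) requires the opposite pairing, namely
\[
\frac{E(a)-E(b)}{a-b}\;=\;\frac{a-\mu}{a-b}\cdot\frac{E(a)-E(\mu)}{a-\mu}\;+\;\frac{\mu-b}{a-b}\cdot\frac{E(\mu)-E(b)}{\mu-b},
\]
which telescopes immediately. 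With the pairing as you wrote it the identity fails in general: take $E(x)=x^{2}$, $a=3$, $b=0$, $\mu=1$, which gives $3$ on the left and $2$ on the right; likewise your ``common denominator $(a-b)$'' verification only goes through for the corrected pairing. Since the correct correspondence is stated in your own preceding sentence and is what your cancellation argument actually uses, this is a transcription slip rather than a missing idea, but it must be fixed before the proof is valid.

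The only genuine methodological difference from the paper is your treatment of the degenerate cases $\mu\in\{\lambda_i,\lambda_j\}$: you invoke continuity of $f^{[n]}$ in $\mu$ (true for $f\in C^{n}$, e.g.\ via the representation of divided differences in~\cite{Devore}, but a fact the paper never needs to state) and extend the identity by a limiting argument, whereas the paper simply substitutes $\mu=\lambda_i$ or $\mu=\lambda_j$ and observes that the two coefficients become $0$ and $1$, so the identity holds trivially since $\lambda_i\neq\lambda_j$. The direct substitution is shorter and avoids importing the continuity of $f^{[n]}$; otherwise your argument coincides with the paper's proof.
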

\begin{proof}
    Since $f^{[n]}$ is invariant under permutation of its variables~\cite{Devore}, we assume without loss of generality that $(i,j)=(0,1)$ to simplify the notation. It follows for $\mu\ne\lambda_i$, $i=0,1$, that \begin{align*}
        f^{[n]}(\lambda_0,\lambda_1,\lambda_2,\dotsc,\lambda_n) &= \frac{1}{\lambda_0-\lambda_1}\left(f^{[n-1]}(\lambda_0,\lambda_2,\lambda_3,\dotsc,\lambda_n)-f^{[n-1]}(\lambda_1,\lambda_2,\dotsc,\lambda_n)\right) \\
        &= \frac{1}{\lambda_0-\lambda_1}\left(f^{[n-1]}(\lambda_0,\lambda_2,\lambda_3,\dotsc,\lambda_n)-f^{[n-1]}(\mu,\lambda_2,\lambda_3,\dotsc,\lambda_n)\right) \\
        &\quad + \frac{1}{\lambda_0-\lambda_1}\left(f^{[n-1]}(\mu,\lambda_2,\lambda_3,\dotsc,\lambda_n)-f^{[n-1]}(\lambda_1,\lambda_2,\dotsc,\lambda_n)\right) \\
        &= \frac{\lambda_0-\mu}{\lambda_0-\lambda_1}f^{[n]}(\lambda_0,\mu,\lambda_2,\lambda_3,\dotsc,\lambda_n) + \frac{\mu-\lambda_1}{\lambda_0-\lambda_1}f^{[n]}(\mu,\lambda_1,\lambda_2,\dotsc,\lambda_n).
    \end{align*}
    Note the same formula holds for $\lambda_0 = \mu$ or $\lambda_1=\mu$ as long as $\lambda_0\ne \lambda_1$.   Indeed, assume without loss of generality $\lambda_0=\mu\ne \lambda_1$, then \begin{equation*}
        \underbrace{\frac{\lambda_0-\mu}{\lambda_0-\lambda_1}}_{=0}f^{[n]}(\lambda_0,\mu,\lambda_2,\lambda_3,\dotsc,\lambda_n) + \underbrace{\frac{\mu-\lambda_1}{\lambda_0-\lambda_1}}_{=1}f^{[n]}(\mu,\lambda_1,\lambda_2,\dotsc,\lambda_n)= f^{[n]}(\lambda_0,\lambda_1,\lambda_2,\dotsc,\lambda_n).
    \end{equation*}
\end{proof}

We define the following functions for $(\lambda_0, \lambda_1, \lambda_2) \in \mathbb{R}^3$. Let
\[
\begin{split}
\psi_1(\lambda_0, \lambda_1, \lambda_2) = & \frac{\lambda_0 - \lambda_1}{\lambda_0 - \lambda_2}, \qquad   \lambda_0 \not = \lambda_2, \\
\psi_2(\lambda_0, \lambda_1, \lambda_2) = \psi_1(\lambda_2, \lambda_0, \lambda_1) = & \frac{\lambda_2 - \lambda_0}{\lambda_2 - \lambda_1}, \qquad  \lambda_2 \not = \lambda_1, \\
\psi_3(\lambda_0, \lambda_1, \lambda_2) = \psi_1(\lambda_1, \lambda_2, \lambda_0) = & \frac{\lambda_1 - \lambda_2}{\lambda_1 - \lambda_0}, \qquad   \lambda_0 \not = \lambda_1.
\end{split}
\]
and
\[
\phi_f(\lambda, \mu) = f^{[2]}(\lambda, \mu, \mu), \quad  \mathring{\phi}_f(\lambda,    \mu) = f^{[2]}(\lambda, \lambda, \mu), \quad \lambda, \mu \in \mathbb{R}.
\]
As divided difference functions are permutation invariant, we have $\mathring{\phi}_f(\lambda, \mu) = \phi_f(\mu, \lambda)$.

At this point we note that $\widetilde{\theta}_j \psi_j, j =1,2,3$ extends to a bounded continuous function on $\mathbb{R}^3 \setminus \Delta$.   Indeed, let $\lambda \in \mathbb{R}^3 \setminus \Delta$ be in the support of $\widetilde{\theta}_j$. Note that the support is by definition a closed set contained in $\Delta_{j,\varepsilon}$, and that $\Delta_{j,\varepsilon}$ does not intersect the rays   $\lambda_0  = \lambda_2$ (for $j=1$), $\lambda_2  = \lambda_1$ (for~$j=2$), or~$\lambda_0  = \lambda_1$ (for $j=3$), see~\eqref{eqn=theta_tilde}. Hence $\widetilde{\theta}_j \psi_j$ is bounded on the support of $\widetilde{\theta}_j$. We may thus extend  $\widetilde{\theta}_j \psi_j$ by setting it equal to zero outside the support of  $\widetilde{\theta}_j$. This extended function is a smooth even homogeneous function on $\mathbb{R}^3 \setminus \Delta$.

We now apply the decomposition of Lemma~\ref{Lem=Decomposition} in the case   $n=2$. In case  $(\lambda_0, \lambda_1, \lambda_2) \in \Delta_{1, \epsilon}$ we have, as also noted in the previous paragraph, that $\lambda_0 \not = \lambda_2$, and we get
\begin{equation}\label{Eqn=Reduction1}
\begin{split}
 f^{[2]}(\lambda_0, \lambda_1, \lambda_2) = &
  \frac{\lambda_0 - \lambda_1}{\lambda_0 - \lambda_2} f^{[2]}(\lambda_0, \lambda_1, \lambda_1) +
 \frac{\lambda_1 - \lambda_2}{\lambda_0 - \lambda_2} f^{[2]}(\lambda_1, \lambda_1, \lambda_2) \\
=  & \psi_1(\lambda_0, \lambda_1, \lambda_2) \phi_f(\lambda_0, \lambda_1) + (1-\psi_1)(\lambda_0, \lambda_1, \lambda_2) \mathring{\phi}_f(\lambda_1, \lambda_2).
 \end{split}
\end{equation}
Similarly, we may use the permutation invariance of divided difference functions and find for $(\lambda_0, \lambda_1, \lambda_2) \in \Delta_{2, \epsilon}$ that
\begin{equation}\label{Eqn=Reduction2}
\begin{split}
 f^{[2]}(\lambda_0, \lambda_1, \lambda_2) = f^{[2]}(\lambda_2, \lambda_0, \lambda_1) =  &
  \frac{\lambda_2 - \lambda_0}{\lambda_2 - \lambda_1} f^{[2]}(\lambda_2, \lambda_0, \lambda_0) +
 \frac{\lambda_0 - \lambda_1}{\lambda_2 - \lambda_1} f^{[2]}(\lambda_0, \lambda_0, \lambda_1) \\
 = & \psi_2(\lambda_0, \lambda_1, \lambda_2) \mathring{\phi}_f(\lambda_0, \lambda_2) + (1-\psi_2)(\lambda_0, \lambda_1, \lambda_2) \mathring{\phi}_f(\lambda_0, \lambda_1).
\end{split}
\end{equation}
Finally, for  $(\lambda_0, \lambda_1, \lambda_2) \in \Delta_{3, \epsilon}$, we have that
\begin{equation}\label{Eqn=Reduction3}
\begin{split}
 f^{[2]}(\lambda_0, \lambda_1, \lambda_2) =
  f^{[2]}(\lambda_1, \lambda_2, \lambda_0) = &
  \frac{\lambda_1 - \lambda_2}{\lambda_1 - \lambda_0} f^{[2]}(\lambda_1, \lambda_2, \lambda_2) +
 \frac{\lambda_2 - \lambda_0}{\lambda_1 - \lambda_0} f^{[2]}(\lambda_2, \lambda_2, \lambda_0) \\
 = & \psi_3(\lambda_0, \lambda_1, \lambda_2) \phi_f(\lambda_1, \lambda_2) + (1-\psi_3)(\lambda_0, \lambda_1, \lambda_2) \phi_f(\lambda_0, \lambda_2).
 \end{split}
\end{equation}
Combining~\eqref{Eqn=ThetaSum},~\eqref{Eqn=Reduction1},~\eqref{Eqn=Reduction2}, and~\eqref{Eqn=Reduction3} we find that
\begin{equation}\label{Eqn=DecomposeFirst}
\begin{split}
 f^{[2]}(\lambda_0, \lambda_1, \lambda_2) = & \widetilde{\theta}_1(\lambda_0, \lambda_1,   \lambda_2) \left(  \psi_1(\lambda_0, \lambda_1, \lambda_2) \phi_f(\lambda_0, \lambda_1) + (1-\psi_1)(\lambda_0, \lambda_1, \lambda_2) \mathring{\phi}_f(\lambda_1, \lambda_2) \right) \\
 & + \widetilde{\theta}_2(\lambda_0, \lambda_1,   \lambda_2)  \left( \psi_2(\lambda_0, \lambda_1, \lambda_2) \mathring{\phi}_f(\lambda_0, \lambda_2) + (1-\psi_2)(\lambda_0, \lambda_1, \lambda_2) \mathring{\phi}_f(\lambda_0, \lambda_1) \right) \\
 & +  \widetilde{\theta}_3(\lambda_0, \lambda_1,   \lambda_2) \left(
 \psi_3(\lambda_0, \lambda_1, \lambda_2) \phi_f(\lambda_1, \lambda_2) + (1-\psi_3)(\lambda_0, \lambda_1, \lambda_2) \phi_f(\lambda_0, \lambda_2)
 \right).
 \end{split}
\end{equation}

This decomposition~\eqref{Eqn=DecomposeFirst} is not yet optimal for our purposes. 
In Section~\ref{Sect=Bilinear}, we shall require that the symbols of the bilinear Toeplitz form Schur multipliers in our decomposition are odd (instead of even) homogeneous.  This in particular implies the vanishing of the paraproduct terms that occur in transference methods for the bilinear term, improving the bound on the norm of our Schur multiplier. In order to achieve this, we include an extra sign function in the three-variable terms, for which we compensate by including a sign function in the two-variable terms. Set
\begin{align*}
\begin{array}{rlrl}
\epsilon(\lambda, \mu) = & {\rm sign}(\mu - \lambda),&   \epsilon_1(\lambda_0, \lambda_1, \lambda_2) = & \sign(\lambda_1 - \lambda_0),\\
 \epsilon_2(\lambda_0, \lambda_1, \lambda_2) = & \sign(\lambda_2 - \lambda_1),  &\epsilon_3(\lambda_0, \lambda_1, \lambda_2) = & \sign(\lambda_2 - \lambda_0),
 \end{array}
\end{align*}
\noindent
where we use the convention that  $\sign(0) = 1$. Then we obtain the following decomposition, that we record here as a proposition.

\begin{proposition}
Let $f \in C^2(\mathbb{R})$  and let  $(\lambda_0, \lambda_1, \lambda_2) \in \mathbb{R}^3 \setminus \Delta$. Then,
\[
\begin{split}
 f^{[2]}(\lambda_0, \lambda_1, \lambda_2) = & \epsilon_1(\lambda_0, \lambda_1, \lambda_2) \widetilde{\theta}_1(\lambda_0, \lambda_1,   \lambda_2)   \psi_1(\lambda_0, \lambda_1, \lambda_2) \: \cdot \: \epsilon(\lambda_0, \lambda_1) \phi_f(\lambda_0, \lambda_1) \\
 &  + \epsilon_2(\lambda_0, \lambda_1, \lambda_2)
  \widetilde{\theta}_1(\lambda_0, \lambda_1,   \lambda_2)   (1-\psi_1)(\lambda_0, \lambda_1, \lambda_2)  \: \cdot \:  \epsilon(\lambda_1, \lambda_2) \mathring{\phi}_f(\lambda_1, \lambda_2)   \\
 & + \epsilon_3(\lambda_0, \lambda_1, \lambda_2)  \widetilde{\theta}_2(\lambda_0, \lambda_1,   \lambda_2)   \psi_2(\lambda_0, \lambda_1, \lambda_2) \: \cdot \:  \epsilon(\lambda_0, \lambda_2) \mathring{\phi}_f(\lambda_0, \lambda_2) \\
 & + \epsilon_1(\lambda_0, \lambda_1, \lambda_2) \widetilde{\theta}_2(\lambda_0, \lambda_1,   \lambda_2)   (1-\psi_2)(\lambda_0, \lambda_1, \lambda_2)  \: \cdot \: \epsilon(\lambda_0, \lambda_1) \mathring{\phi}_f(\lambda_0, \lambda_1)  \\
 & +  \epsilon_2(\lambda_0, \lambda_1, \lambda_2)  \widetilde{\theta}_3(\lambda_0, \lambda_1,   \lambda_2)
 \psi_3(\lambda_0, \lambda_1, \lambda_2)  \: \cdot \: \epsilon(\lambda_1, \lambda_2) \phi_f(\lambda_1, \lambda_2)\\
  & + \epsilon_3(\lambda_0, \lambda_1, \lambda_2)  \widetilde{\theta}_3(\lambda_0, \lambda_1,   \lambda_2)  (1-\psi_3)(\lambda_0, \lambda_1, \lambda_2)  \: \cdot \: \epsilon(\lambda_0, \lambda_2) \phi_f(\lambda_0, \lambda_2).
 \end{split}
\]
\end{proposition}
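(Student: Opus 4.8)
The plan is to simply combine the case-by-case decomposition formulas \eqref{Eqn=Reduction1}, \eqref{Eqn=Reduction2}, \eqref{Eqn=Reduction3} (already packaged into \eqref{Eqn=DecomposeFirst}) with the bookkeeping identity $\sign(t)^2 = 1$ for $t \neq 0$, applied termwise. Concretely, start from \eqref{Eqn=DecomposeFirst}. For each of the six summands, the two-variable factor is one of $\phi_f(\cdot,\cdot)$ or $\mathring{\phi}_f(\cdot,\cdot)$ evaluated at a pair of variables whose difference has a definite sign captured by one of $\epsilon, \epsilon_1, \epsilon_2, \epsilon_3$. The key observation is that the same sign factor matches up between the three-variable part and the two-variable part: for instance in the first summand the argument pair of $\phi_f$ is $(\lambda_0,\lambda_1)$, and $\epsilon(\lambda_0,\lambda_1) = \sign(\lambda_1-\lambda_0) = \epsilon_1(\lambda_0,\lambda_1,\lambda_2)$, so $\epsilon_1 \cdot \epsilon(\lambda_0,\lambda_1) = \epsilon_1^2 = 1$ on $\mathbb{R}^3\setminus\Delta$ restricted to the support of $\widetilde\theta_1$ (where we are never on the diagonal $\lambda_0 = \lambda_1$ if we are careful, but in any case $\sign(0)=1$ is used as the convention so the product is identically $1$). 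Hence multiplying the first summand of \eqref{Eqn=DecomposeFirst} by $\epsilon_1 \cdot \epsilon(\lambda_0,\lambda_1)$ leaves it unchanged, and likewise for the other five summands with the matching sign pairs. Adding up the six unchanged summands reproduces \eqref{Eqn=DecomposeFirst}, which equals $f^{[2]}(\lambda_0,\lambda_1,\lambda_2)$; this is exactly the claimed formula.

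In more detail, the six required matchings are: summand~1, $\epsilon(\lambda_0,\lambda_1)$ with $\epsilon_1 = \sign(\lambda_1-\lambda_0)$; summand~2, the term $\widetilde\theta_1(1-\psi_1)\mathring\phi_f(\lambda_1,\lambda_2)$ paired with $\epsilon(\lambda_1,\lambda_2)$ and $\epsilon_2 = \sign(\lambda_2-\lambda_1)$; summand~3, $\widetilde\theta_2\psi_2 \mathring\phi_f(\lambda_0,\lambda_2)$ with $\epsilon(\lambda_0,\lambda_2)$ and $\epsilon_3 = \sign(\lambda_2-\lambda_0)$; summand~4, $\widetilde\theta_2(1-\psi_2)\mathring\phi_f(\lambda_0,\lambda_1)$ with $\epsilon(\lambda_0,\lambda_1)$ and $\epsilon_1$; summand~5, $\widetilde\theta_3\psi_3 \phi_f(\lambda_1,\lambda_2)$ with $\epsilon(\lambda_1,\lambda_2)$ and $\epsilon_2$; summand~6, $\widetilde\theta_3(1-\psi_3)\phi_f(\lambda_0,\lambda_2)$ with $\epsilon(\lambda_0,\lambda_2)$ and $\epsilon_3$. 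Each pair multiplies to a product of the form $\sign(t)\sign(t) = 1$ (with the convention $\sign(0)=1$). So insert the factor $\epsilon_k \cdot \epsilon(\lambda_a,\lambda_b) = 1$ into each summand of \eqref{Eqn=DecomposeFirst} and the equality is preserved; after regrouping the two types of factor, the right-hand side is literally the displayed formula of the proposition.

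The only genuine point to check — and what I expect to be the sole (minor) obstacle — is that \eqref{Eqn=DecomposeFirst} itself is valid at \emph{all} points of $\mathbb{R}^3\setminus\Delta$, not just in the interiors of the $\Delta_{j,\epsilon}$; this is already handled in the text preceding the proposition, since $\theta_1+\theta_2+\theta_3 = 1$ with $\supp\widetilde\theta_j \subseteq \Delta_{j,\epsilon}$ (so that on $\supp\widetilde\theta_j$ the relevant denominator $\lambda_0-\lambda_2$, $\lambda_2-\lambda_1$, or $\lambda_0-\lambda_1$ is nonzero, making $\widetilde\theta_j\psi_j$ well-defined and each of \eqref{Eqn=Reduction1}--\eqref{Eqn=Reduction3} applicable), and Lemma~\ref{Lem=Decomposition} with $\mu$ the repeated variable (it even allows $\mu$ to coincide with one of the $\lambda_i$, which is precisely the degenerate situation hit by the substitutions $f^{[2]}(\lambda_0,\lambda_1,\lambda_1)$ etc.). With that in hand, multiplying each summand by the appropriate unit sign factor and collecting terms gives the stated decomposition; there is no further computation.
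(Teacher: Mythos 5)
Your proof is correct and is essentially the paper's own (implicit) argument: the proposition is exactly \eqref{Eqn=DecomposeFirst} with each summand multiplied by the matching unit factor $\epsilon_k(\lambda_0,\lambda_1,\lambda_2)\,\epsilon(\lambda_a,\lambda_b)=\sign(t)^2=1$, the convention $\sign(0)=1$ taking care of points such as $\lambda_0=\lambda_1$ (which, despite your parenthetical, can lie in $\supp\widetilde{\theta}_1$, but this costs nothing). Your justification that \eqref{Eqn=DecomposeFirst} holds on all of $\mathbb{R}^3\setminus\Delta$ — via $\supp\widetilde{\theta}_j\subseteq\Delta_{j,\epsilon}$ keeping the relevant denominators nonzero and Lemma~\ref{Lem=Decomposition} with $\mu$ the repeated variable — is likewise the paper's reasoning.
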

\begin{remark}
In the previous expression we separated the two-variable terms from the three-variable terms with a `$\cdot$'.
\end{remark}
For the corresponding Schur multipliers we find the following decomposition.

\begin{proposition}\label{Prop=SchurDecomposition}
Let $f \in C^2(\mathbb{R})$. For $x,y \in S_2$ we have
\begin{equation}\label{Eqn=MainDecompositionStart}
\begin{split}
 M_{f^{[2]}}(x,y) = &   M_{ \epsilon_1 \widetilde{\theta}_1  \psi_1 }(  M_{\epsilon \phi_f}(x), y ) +  M_{ \epsilon_2 \widetilde{\theta}_1  (1-\psi_1) }(  x, M_{\epsilon \mathring{\phi}_f }( y)  )  \\
 & +   M_{\epsilon \mathring{\phi}_f}(    M_{ \epsilon_3 \widetilde{\theta}_2  \psi_2 }( x, y )  ) +  M_{ \epsilon_1 \widetilde{\theta}_2  (1-\psi_2) }(  M_{ \epsilon \mathring{\phi}_f }( x),  y   )  \\
 & +      M_{ \epsilon_{2} \widetilde{\theta}_3  \psi_3 }( x, M_{\epsilon \phi_f}( y )  ) +  M_{\epsilon \phi_f}(  M_{ \epsilon_3 \widetilde{\theta}_3  (1-\psi_3) }( x,  y   )  ).  \\
\end{split}
\end{equation}
\end{proposition}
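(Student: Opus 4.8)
The plan is to translate the pointwise scalar identity of the previous Proposition directly into an identity of bilinear Schur multipliers acting on $S_2 \times S_2$, using the multiplicativity of the kernel formula. First I would recall that for $m \in L^\infty(\mathbb{R}^3)$ the bilinear Schur multiplier $M_m$ acts on a pair $(x,y) \in S_2 \times S_2$ through the kernel formula
\[
M_m(x,y)(s_0, s_2) = \int_{\mathbb{R}} m(s_0, s_1, s_2)\, x(s_0, s_1)\, y(s_1, s_2)\, ds_1 ,
\]
and that the linear Schur multiplier $M_\ell$ for $\ell \in L^\infty(\mathbb{R}^2)$ acts by $M_\ell(z)(s,t) = \ell(s,t) z(s,t)$. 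The key observation is that composing a bilinear Schur multiplier with linear Schur multipliers applied to its arguments or to its output just multiplies the symbols: for instance, for $g \in L^\infty(\mathbb{R}^2)$ and $n \in L^\infty(\mathbb{R}^3)$, one has $M_n(M_g(x), y) = M_{n \cdot (g \otimes 1)}(x,y)$ where $(g \otimes 1)(\lambda_0,\lambda_1,\lambda_2) = g(\lambda_0,\lambda_1)$, and similarly $M_\ell(M_n(x,y)) = M_{(\ell \circ \pi_{02}) \cdot n}(x,y)$ with $\pi_{02}(\lambda_0,\lambda_1,\lambda_2)=(\lambda_0,\lambda_2)$; these follow immediately from substituting the kernel formulas and using Fubini, which is justified since all symbols are bounded and $x,y \in S_2 = L^2(\mathbb{R}^2)$.

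Next I would identify each of the six terms on the right-hand side of \eqref{Eqn=MainDecompositionStart} with the corresponding summand of the pointwise decomposition in the preceding Proposition. Concretely: the first term $M_{\epsilon_1 \widetilde{\theta}_1 \psi_1}(M_{\epsilon \phi_f}(x), y)$ has symbol $\epsilon_1(\lambda_0,\lambda_1,\lambda_2)\,\widetilde{\theta}_1(\lambda_0,\lambda_1,\lambda_2)\,\psi_1(\lambda_0,\lambda_1,\lambda_2) \cdot \epsilon(\lambda_0,\lambda_1)\,\phi_f(\lambda_0,\lambda_1)$, which is exactly the first line of the pointwise Proposition; the third term $M_{\epsilon \mathring{\phi}_f}(M_{\epsilon_3 \widetilde{\theta}_2 \psi_2}(x,y))$ has symbol $\epsilon(\lambda_0,\lambda_2)\,\mathring{\phi}_f(\lambda_0,\lambda_2) \cdot \epsilon_3(\lambda_0,\lambda_1,\lambda_2)\,\widetilde{\theta}_2(\lambda_0,\lambda_1,\lambda_2)\,\psi_2(\lambda_0,\lambda_1,\lambda_2)$, matching the third line; and so on for the remaining four, where one must only be careful that the composition is in the argument slot (tensoring with $1$ in the appropriate variable) versus the output slot (precomposition with the projection onto the $(\lambda_0,\lambda_2)$ coordinates). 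Summing the six identities and invoking the pointwise Proposition gives that the total symbol equals $f^{[2]}$ on $\mathbb{R}^3 \setminus \Delta$, and since $\Delta$ is a null set for the integration over $s_1$ this determines $M_{f^{[2]}}(x,y)$ as an element of $S_2$.

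One point requiring a remark rather than a real obstacle: all the symbols appearing — the $\widetilde{\theta}_j$, the $\epsilon_j$, the $\epsilon$, and the $\widetilde{\theta}_j \psi_j$ (extended by zero off the support of $\widetilde{\theta}_j$, as discussed in the paragraph preceding \eqref{Eqn=Reduction1}), and $\phi_f, \mathring{\phi}_f$ — are bounded measurable functions, so each individual Schur multiplier is well-defined and bounded on the relevant $S_2$ spaces by $\cite[\text{Proposition 5}]{CLS-AIF}$, and hence all compositions make sense on $S_2 \times S_2$. The only genuine care needed is bookkeeping: verifying that in the pointwise Proposition the factor $(1-\psi_j)$ corresponds, after the sign-function manipulation, to the symbol written in the composed multiplier (e.g.\ that $\epsilon_2 \widetilde{\theta}_1 (1-\psi_1)$ multiplied by $\epsilon\,\mathring{\phi}_f$ on the $(\lambda_1,\lambda_2)$-variables reproduces the second line), which is a direct term-by-term comparison. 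I expect no analytic difficulty; the main (minor) obstacle is simply keeping the six permutation patterns and the placement of the two-variable factor in the correct coordinate slot consistent between the scalar identity and the operator identity.
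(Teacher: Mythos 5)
Your proposal is correct and follows essentially the same route as the paper: the paper's proof notes that all multipliers involved are bounded on $S_2$ and then invokes a mild variation of \cite[Lemma 3.2]{PSS-Inventiones} (remarking it ``can easily be verified directly''), which is exactly the symbol-multiplication rule for composing bilinear Schur multipliers with linear ones that you verify via the kernel formula. Your bookkeeping of the six terms and the almost-everywhere identification of symbols off the negligible set $\Delta$ matches the intended argument, so no gap.
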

\begin{proof}
Note by Section~\ref{Sect=SchurPrelim} (or~\cite[Proposition 5]{CLS-AIF}) that all linear and bilinear Schur multipliers appearing in~\eqref{Eqn=MainDecompositionStart} are bounded as maps on  $S_2 \rightarrow S_2$ or $S_2 \times S_2 \rightarrow S_2$.
The proposition is now a consequence of a mild variation of~\cite[Lemma 3.2]{PSS-Inventiones}, which can  easily be verified directly in the same way.
\end{proof}

Now we outline our proof strategy for the next sections. All the linear Schur multipliers appearing in the decomposition~\eqref{Eqn=MainDecompositionStart} shall be estimated in Section~\ref{sect: linear_bddness_hms}. Each of the six summands in~\eqref{Eqn=MainDecompositionStart} contains a bilinear Schur multiplier. The last four of these summands  shall be estimated in  Section~\ref{Sect=S1Estimate}. The first two summands shall be estimated  in   Section~\ref{Sect=Bilinear}. In fact, the methods of   Section~\ref{Sect=Bilinear} can be used to estimate all six bilinear terms in~\eqref{Eqn=MainDecompositionStart}. However, the constants obtained in Section~\ref{Sect=S1Estimate} have better asymptotics, which is particularly relevant for the asymptotics for $p \searrow 1$ (as in Theorem~\ref{Thm=TheoremA_intro}) for the third and sixth summand.  

Strictly speaking, the sign functions $\epsilon$, $\epsilon_i$ in the last four summands of~\eqref{Eqn=MainDecompositionStart} are not needed for the estimates in Section~\ref{Sect=S1Estimate}. We have included them to show that these terms can also be estimated with the methods of Section~\ref{Sect=Bilinear}.


\section{Bounding linear terms with the H\"ormander-Mikhlin-Schur multiplier theorem}\label{sect: linear_bddness_hms}

In this section, we show the boundedness of the linear Schur multipliers $M_{\phi_f}$ and $M_{\mathring{\phi}_f}$  defined in Section~\ref{sect: decomposition fn}. Note that while the majority of this paper is concerned with second order divided difference functions, we will prove the results in this section for general $n$-th order divided difference functions.

We will use the following theorem.
\begin{theorem}[{\cite[Theorem A]{ParcetAnnals}}]\label{thrm: CGPT}
Let $\phi\in C^{\lfloor\tfrac{d}{2}\rfloor+1}(\mathbb{R}^{2d}\setminus\{\lambda=\mu\})$, $p\in(1,\infty)$, and let~$M_\phi$ be the Schur multiplier associated with $\phi$. Then \begin{equation*}
    \|M_\phi\|_{S_p\to S_p} \lesssim  p p^\ast   \triplevert \phi\triplevert_{\mathrm{HMS}}
\end{equation*}
with $\triplevert \phi\triplevert_{\mathrm{HMS}}:=\sum_{|\gamma|\le\lfloor\tfrac{d}{2}\rfloor+1}\|(\lambda,\mu)\mapsto|\lambda-\mu|^{|\gamma|}(|\partial_\lambda^{\gamma}\phi(\lambda,\mu)|+|\partial_\mu^{\gamma}\phi(\lambda,\mu)|)\|_{\infty}$.
\end{theorem}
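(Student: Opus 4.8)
Theorem~\ref{thrm: CGPT} is not established here: it is quoted verbatim from \cite[Theorem A]{ParcetAnnals} and is used as a black box in what follows, so the ``proof'' in the present paper is a citation. Nonetheless, here is the route I would take, which is also essentially the one of \cite{ParcetAnnals}.

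The plan is to realise the Schur multiplier $M_\phi$ --- which \emph{a priori} carries no group structure --- as a Fourier (or pseudodifferential) multiplier on a group of homogeneous type, and then to invoke noncommutative Calder\'on--Zygmund theory there. First I would record the classical transference for \emph{Toeplitz} symbols: if $\phi(\lambda,\mu)=\dot\phi(\lambda-\mu)$, then $M_\phi$ on $S_p(\mathbb{R}^d)$ has the same norm as an associated Fourier multiplier on $L^p(\mathbb{R}^d)$, so that Theorem~\ref{thrm: CGPT} reduces in that case to the usual H\"ormander--Mikhlin theorem together with its sharp $pp^\ast$ constant (Calder\'on--Zygmund decomposition, $L^\infty\to\mathrm{BMO}$ endpoint bound, interpolation with $L^2$). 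The essential point for a \emph{general}, non-Toeplitz $\phi(\lambda,\mu)$ is to pass instead to a larger group --- a semidirect product $\mathbb{R}^d\rtimes\mathbb{R}^d$, or the Heisenberg group, together with a Weyl-type quantisation --- whose extra directions encode the dependence of $\phi$ on the base point $\lambda$ as well as on the difference $\xi=\lambda-\mu$; under the corresponding transference, $M_\phi$ becomes a convolution-type operator $T_{m_\phi}$ on (a corner of) the associated group von Neumann algebra, with convolution kernel $K_\phi$ supported on a $d$-dimensional stratum of the group. I would then verify that the stated differential bounds on $\phi$ --- involving only the non-mixed derivatives $\partial_\lambda^\gamma\phi$, $\partial_\mu^\gamma\phi$ for $|\gamma|\le\lfloor d/2\rfloor+1$ --- translate into the H\"ormander size and smoothness conditions for $K_\phi$ on that stratum; it is precisely because $K_\phi$ lives on a $d$-dimensional set rather than a $2d$-dimensional one that mixed derivatives are not needed. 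Finally I would apply the operator-valued H\"ormander--Mikhlin multiplier theorem on the group to obtain the $S_p$ bound $\lesssim pp^\ast\triplevert\phi\triplevert_{\mathrm{HMS}}$, transferring back.

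The main obstacle --- and what makes this a genuinely hard theorem rather than a routine transference --- is the second reduction: a general Schur multiplier has no underlying group, so one must \emph{construct} the right group, metric and quantisation so that (i) $M_\phi$ becomes a Calder\'on--Zygmund Fourier multiplier there, and (ii) the H\"ormander kernel conditions follow from the hypotheses on $\phi$ with doubling and Calder\'on--Zygmund constants that do not degrade. Obtaining the optimal $pp^\ast$ dependence (rather than a cruder power of $p$) is a further subtlety, as it forces one to run the noncommutative Calder\'on--Zygmund machinery carefully at the $\mathrm{BMO}$ endpoint and then interpolate, which is the technical core of \cite{ParcetAnnals}. For the purposes of the present paper, however, none of this is reproved: Theorem~\ref{thrm: CGPT} is simply applied in Section~\ref{sect: linear_bddness_hms} to the symbols $\phi_f$ and $\mathring\phi_f$.
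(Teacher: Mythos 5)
Your proposal is correct in the only sense that matters here: the paper does not prove Theorem~\ref{thrm: CGPT} either, but imports it verbatim from~\cite{ParcetAnnals} and uses it as a black box in Section~\ref{sect: linear_bddness_hms}, exactly as you say. Your sketch of the transference/noncommutative Calder\'on--Zygmund route of~\cite{ParcetAnnals} is a reasonable account of the original argument, but since neither you nor the paper reproves it, there is nothing further to compare.
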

We want to apply Theorem~\ref{thrm: CGPT} to multipliers with symbol $\phi_f(\lambda,\mu)=f^{[n]}(\lambda^{(k)},\mu^{(n+1-k)})$ for some $1\le k\le n$. Here, we use the notation introduced in Section~\ref{subsec: prelim_divdiff}. We need the following two lemmas.

\begin{lemma}\label{lemma: divdiff derivatives} Let $n\ge 1$, $0\le k \le n+1$, and let $f\in C^{n+1}(\mathbb{R})$. Then the partial derivatives of the map $(\lambda,\mu)\mapsto f^{[n]}(\lambda^{(k)},\mu^{(n+1-k)})$ are given by \begin{align*}
    &\partial_\lambda f^{[n]}(\lambda^{(k)},\mu^{(n+1-k)})=kf^{[n+1]}(\lambda^{(k+1)},\mu^{(n+1-k)}), \\
    &\partial_\mu f^{[n]}(\lambda^{(k)},\mu^{(n+1-k)})=(n+1-k)f^{[n+1]}(\lambda^{(k)},\mu^{(n+2-k)}).
\end{align*}
Furthermore, $\left((\lambda,\mu)\mapsto f^{[n]}(\lambda^{(k)},\mu^{(n+1-k)})\right)\in C^{1}(\mathbb{R}^2\setminus\{ \lambda=\mu \})$.
\end{lemma}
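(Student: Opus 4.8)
The plan is to prove the derivative formulas by an explicit computation using the recursive definition of $f^{[n]}$ together with its permutation invariance, and then to deduce the $C^1$-regularity as a consequence. First I would fix notation: write $g(\lambda,\mu) = f^{[n]}(\lambda^{(k)},\mu^{(n+1-k)})$, and assume $0 \le k \le n+1$. The two formulas to be shown are symmetric under the swap $(\lambda,k)\leftrightarrow(\mu,n+1-k)$ (using permutation invariance of $f^{[n]}$), so it suffices to prove the formula for $\partial_\lambda g$; the one for $\partial_\mu g$ then follows by applying it to the function $(\mu,\lambda)\mapsto f^{[n]}(\mu^{(n+1-k)},\lambda^{(k)})$ and relabelling. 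So the real content is a single identity.

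To compute $\partial_\lambda g$ at a point $(\lambda,\mu)$, I would distinguish the generic case $\lambda \ne \mu$ from the diagonal case $\lambda = \mu$. When $\lambda \ne \mu$ and $1 \le k \le n$, I would use the definition $f^{[n]}(\lambda^{(k)},\mu^{(n+1-k)}) = \frac{f^{[n-1]}(\lambda^{(k-1)},\mu^{(n+1-k)}) - f^{[n-1]}(\lambda^{(k)},\mu^{(n-k)})}{\lambda-\mu}$ (choosing $i,j$ to be one $\lambda$-slot and one $\mu$-slot), differentiate in $\lambda$ using the quotient rule and an induction hypothesis on $n$, and simplify; the telescoping structure of divided differences should collapse the resulting expression to $k\, f^{[n+1]}(\lambda^{(k+1)},\mu^{(n+1-k)})$. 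The base case $n=0$ (or a convenient small case) is immediate: $f^{[0]}(\lambda) = f(\lambda)$ has $\partial_\lambda f^{[0]}(\lambda) = f'(\lambda) = 1\cdot f^{[1]}(\lambda,\lambda)$. The extreme cases $k=0$ and $k=n+1$ (where $g$ does not actually depend on $\lambda$, respectively on $\mu$) are consistent with the formula since the prefactor vanishes. An alternative, possibly cleaner route I would consider is to use the Hermite--Genocchi integral representation $f^{[n]}(\lambda_0,\dots,\lambda_n) = \int_{\Sigma_n} f^{(n)}\bigl(\sum_i t_i \lambda_i\bigr)\,dt$ over the standard simplex, differentiate under the integral sign, and recognize the result; this bypasses the case analysis entirely, at the cost of invoking that representation.

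For the diagonal points $\lambda=\mu$, the cleanest approach is to establish the formula first on the open dense set $\{\lambda\ne\mu\}$, then argue continuity: since $f\in C^{n+1}$, both $(\lambda,\mu)\mapsto f^{[n]}(\lambda^{(k)},\mu^{(n+1-k)})$ and $(\lambda,\mu)\mapsto f^{[n+1]}(\lambda^{(k+1)},\mu^{(n+1-k)})$ are continuous on all of $\mathbb{R}^2$ (continuity of divided difference functions of $C^m$ functions follows from the Hermite--Genocchi representation or from the defining recursion, and is used implicitly in the paper, cf.\ \eqref{Eqn=DiffDiff}). Hence the candidate derivative extends continuously across the diagonal; combined with the fact that $g$ is continuous and is an antiderivative (in $\lambda$, for fixed $\mu$) of this continuous function off the diagonal, one concludes by a standard limiting argument that $g$ is $C^1$ on $\mathbb{R}^2\setminus\{\lambda=\mu\}$ — indeed on all of $\mathbb{R}^2$ — with the stated partial derivatives everywhere. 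Note that the lemma as stated only claims $C^1$ on $\mathbb{R}^2\setminus\{\lambda=\mu\}$, which makes the diagonal issue a non-issue for the final assertion, but it costs nothing to handle it.

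The main obstacle I anticipate is purely bookkeeping: carrying out the quotient-rule differentiation and the induction so that the combinatorial prefactors ($k$, $n+1-k$) come out exactly right, and correctly handling the boundary values $k=0$, $k=n+1$ and the choice of which variables play the roles of $i,j$ in the recursion (one must be careful that the recursion is applied with $i\ne j$ in slots holding \emph{different} values $\lambda\ne\mu$). Using Hermite--Genocchi would essentially eliminate this obstacle, so if the paper is willing to invoke it, that is the route I would take; otherwise the inductive computation is routine but needs care.
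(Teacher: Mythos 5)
Your proposal follows essentially the same route as the paper's proof: permutation invariance reduces everything to $\partial_\lambda$, the formula is established by induction on $n$ via the defining recursion and the quotient rule (the paper's base case is $n=1$ with the subcases $k=0,1,2$, and the extreme values of $k$ are handled exactly as you indicate), and the $C^1$ claim off the diagonal follows from continuity of $f^{(n+1)}$ and of the divided difference functions, so the Hermite--Genocchi detour is not needed. One small point of care: with the paper's convention the recursion reads $f^{[n]}(\lambda^{(k)},\mu^{(n+1-k)})=\bigl(f^{[n-1]}(\lambda^{(k)},\mu^{(n-k)})-f^{[n-1]}(\lambda^{(k-1)},\mu^{(n+1-k)})\bigr)/(\lambda-\mu)$, whereas your sketch writes the numerator in the opposite order over $\lambda-\mu$, a sign slip that the actual computation would immediately expose and fix.
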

\begin{proof} Since $f^{[n]}$ is invariant under permutation of its variables, it is sufficient to calculate the partial derivatives in $\lambda$. For $n=1$, there are three cases to consider:
    \begin{itemize}
        \item $k=0$: $\partial_\lambda f^{[1]}(\mu,\mu)=0$.
        \item $k=2$: $\partial_\lambda f^{[1]}(\lambda,\lambda)=\partial_\lambda f'(\lambda)=f''(\lambda)=2f^{[2]}(\lambda,\lambda,\lambda)$, where we used Definition~\ref{def: divdiff}.
        \item $k=1$: We use the product rule to show \begin{align*}
            \partial_\lambda f^{[1]}(\lambda,\mu)=\partial_\lambda \frac{f(\lambda)-f(\mu)}{\lambda-\mu} = \frac{f'(\lambda)}{\lambda-\mu}-\frac{f(\lambda)-f(\mu)}{(\lambda-\mu)^2} = \frac{f^{[1]}(\lambda,\lambda)-f^{[1]}(\lambda,\mu)}{\lambda-\mu}= f^{[2]}(\lambda,\lambda,\mu).
        \end{align*}
    \end{itemize}
    By definition, continuity of $f^{[1]}$ follows from continuity of $f$. Furthermore, its derivatives are continuous in $\lambda\ne \mu$ by continuity of $f''$ and $f^{[1]}$.

Now let $n\in\mathbb{N}$. For $k=0$, the statement is immediate. For $0<k\le n+1$, we use the product rule and induction to show \begin{align*}
        &\partial_\lambda f^{[n]}(\lambda^{(k)},\mu^{(n+1-k)})\\
        =&\frac{\partial_\lambda (f^{[n-1]}(\lambda^{(k)},\mu^{(n-k)}) -  f^{[n-1]}(\lambda^{(k-1)},\mu^{(n+1-k)}))}{\lambda-\mu}-\frac{f^{[n-1]}(\lambda^{(k)},\mu^{(n-k)}) - f^{[n-1]}(\lambda^{(k-1)},\mu^{(n+1-k)})}{(\lambda-\mu)^2} \\
        =& \frac{k f^{[n]}(\lambda^{(k+1)},\mu^{(n-k)}) - (k-1) f^{[n]}(\lambda^{(k)},\mu^{(n+1-k)})-f^{[n]}(\lambda^{(k)},\mu^{(n+1-k)})}{\lambda-\mu} \\
        =& kf^{[n+1]}(\lambda^{(k+1)},\mu^{(n+1-k)}).
    \end{align*}
    Continuity of $(\lambda,\mu)\mapsto f^{[n]}(\lambda^{(k)},\mu^{(n+1-k)})$ in $\lambda\ne \mu$ follows by induction from continuity of the corresponding $f^{[n-1]}$-terms. As in the base case, continuity of its first derivatives in $\lambda\ne \mu$ follows from continuity of $f^{(n+1)}$ and $f^{[n]}$. 
\end{proof}
\begin{lemma}\label{lemma: CGPT estimates}
    For $n\in\mathbb{N}$, $0\le k \le n+1$, $0\le\gamma\le\min\{k,n+1-k\}$, and $(\lambda,\mu)\in\mathbb{R}^2\setminus\{\lambda=\mu\}$, $$|\lambda-\mu|^{\gamma}|\partial_\lambda^{\gamma}f^{[n]}(\lambda^{(k)},\mu^{(n+1-k)})|\le 2^{\gamma}\frac{(k+\gamma-1)!}{(k-1)!}\frac{\|f^{(n)}\|_{\infty}}{n!}.$$
\end{lemma}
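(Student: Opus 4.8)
The plan is to prove the estimate by induction on $\gamma$, using the differentiation formula from Lemma~\ref{lemma: divdiff derivatives} to reduce $\partial_\lambda^\gamma$ applied to $f^{[n]}(\lambda^{(k)},\mu^{(n+1-k)})$ to a higher-order divided difference, and then controlling that divided difference by the sup-norm bound~\eqref{Eqn=DiffDiff}. For $\gamma=0$ the claimed inequality reads $|f^{[n]}(\lambda^{(k)},\mu^{(n+1-k)})|\le \|f^{(n)}\|_\infty/n!$, which is exactly~\eqref{Eqn=DiffDiff}. For the inductive step, I would iterate the first formula of Lemma~\ref{lemma: divdiff derivatives}: since $\partial_\lambda f^{[n]}(\lambda^{(k)},\mu^{(n+1-k)}) = k f^{[n+1]}(\lambda^{(k+1)},\mu^{(n+1-k)})$, applying $\partial_\lambda$ repeatedly $\gamma$ times (each time the divided difference gains one order and one extra $\lambda$-variable, with the constraint $\gamma \le \min\{k, n+1-k\}$ ensuring there are enough $\mu$-slots — actually only the first of these matters for this argument, see the obstacle below) gives
\[
\partial_\lambda^\gamma f^{[n]}(\lambda^{(k)},\mu^{(n+1-k)}) = \frac{(k+\gamma-1)!}{(k-1)!}\, f^{[n+\gamma]}(\lambda^{(k+\gamma)},\mu^{(n+1-k)}),
\]
where the product of constants $k(k+1)\cdots(k+\gamma-1)$ telescopes to $(k+\gamma-1)!/(k-1)!$. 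This identity itself is proven by a short induction on $\gamma$ using Lemma~\ref{lemma: divdiff derivatives}.

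\textbf{Closing the estimate.} Once this closed formula for the iterated derivative is in hand, I would bound
\[
|\lambda-\mu|^\gamma \, |\partial_\lambda^\gamma f^{[n]}(\lambda^{(k)},\mu^{(n+1-k)})| = |\lambda-\mu|^\gamma \frac{(k+\gamma-1)!}{(k-1)!}\, |f^{[n+\gamma]}(\lambda^{(k+\gamma)},\mu^{(n+1-k)})|,
\]
and then estimate $|f^{[n+\gamma]}(\lambda^{(k+\gamma)},\mu^{(n+1-k)})|$. Here there are two natural routes. One is to use~\eqref{Eqn=DiffDiff} directly for $f^{[n+\gamma]}$, giving $|f^{[n+\gamma]}| \le \|f^{(n+\gamma)}\|_\infty/(n+\gamma)!$ — but this involves $f^{(n+\gamma)}$, not $f^{(n)}$, so it is the wrong norm. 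Instead, the right approach is to keep one factor of the divided difference and unfold it: write $f^{[n+\gamma]}(\lambda^{(k+\gamma)},\mu^{(n+1-k)})$ in terms of $f^{[n+\gamma-1]}$ evaluated at one fewer $\lambda$ (or one fewer $\mu$), divided by $\lambda-\mu$; after $\gamma$ such unfoldings one lands back at a combination of $f^{[n]}$ terms divided by $(\lambda-\mu)^\gamma$, so the $|\lambda-\mu|^\gamma$ prefactor cancels. The number of terms produced is at most $2^\gamma$ (a binomial-type count coming from the two-term recursion at each unfolding), and each is a value of $f^{[n]}(\lambda^{(j)},\mu^{(n+1-j)})$ for various $j$, hence bounded by $\|f^{(n)}\|_\infty/n!$ via~\eqref{Eqn=DiffDiff}. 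Combining the $2^\gamma$ from the unfolding count with the constant $(k+\gamma-1)!/(k-1)!$ from the iterated differentiation yields the stated bound $2^\gamma \frac{(k+\gamma-1)!}{(k-1)!}\frac{\|f^{(n)}\|_\infty}{n!}$.

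\textbf{Main obstacle.} The delicate point is bookkeeping the interplay between differentiation (which raises the divided-difference order and accumulates the factorial constant) and the cancellation of $|\lambda-\mu|^\gamma$ (which requires unfolding the divided difference back down, accumulating the $2^\gamma$). A cleaner way to package both at once is to prove directly by induction on $\gamma$ the combined statement: for $0 \le \gamma \le \min\{k, n+1-k\}$, the quantity $(\lambda-\mu)^\gamma \partial_\lambda^\gamma f^{[n]}(\lambda^{(k)},\mu^{(n+1-k)})$ equals a signed sum of at most $2^\gamma$ terms of the form $c_j\, f^{[n]}(\lambda^{(j)},\mu^{(n+1-j)})$ with $\sum_j |c_j| \le 2^\gamma (k+\gamma-1)!/(k-1)!$; the constraint $\gamma \le n+1-k$ guarantees that throughout the induction there remain enough $\mu$-slots so that all divided differences that appear are genuinely $n$-th order divided differences of $f$ with at least one $\mu$, so~\eqref{Eqn=DiffDiff} applies uniformly. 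The base case $\gamma=0$ is trivial, and the inductive step applies $\partial_\lambda$ once (using Lemma~\ref{lemma: divdiff derivatives}) and one unfolding of the recursion, multiplying the term count by at most $2$ and the constant bound by the appropriate factor; then~\eqref{Eqn=DiffDiff} finishes. The only real care needed is checking that the constant $(k+\gamma-1)!/(k-1)!$ indeed absorbs the factors $k, k+1, \dots$ picked up at each differentiation step, which is the telescoping computation above and is routine.
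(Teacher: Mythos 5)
Your proposal is correct and follows essentially the same route as the paper: iterate Lemma~\ref{lemma: divdiff derivatives} to obtain $\partial_\lambda^{\gamma}f^{[n]}(\lambda^{(k)},\mu^{(n+1-k)})=\frac{(k+\gamma-1)!}{(k-1)!}f^{[n+\gamma]}(\lambda^{(k+\gamma)},\mu^{(n+1-k)})$, then unfold the divided-difference recursion $\gamma$ times to cancel $|\lambda-\mu|^{\gamma}$ and bound each resulting $f^{[n]}$-term by \eqref{Eqn=DiffDiff}. The paper simply makes your ``binomial-type count'' explicit, writing the unfolded expression as $\frac{1}{(\lambda-\mu)^{\gamma}}\sum_{j=0}^{\gamma}(-1)^{j}\binom{\gamma}{j}f^{[n]}(\lambda^{(k+\gamma-j)},\mu^{(n+1-k-(\gamma-j))})$, whose coefficient mass is $2^{\gamma}$.
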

\begin{proof} For $\gamma=0$, this statement is immediate from~\eqref{Eqn=DiffDiff}. Let now $0<\gamma\le\min\{k,n+1-k\}$.
    By repeatedly applying Lemma~\ref{lemma: divdiff derivatives}, we obtain\begin{equation*}
        \partial_\lambda^{\gamma}f^{[n]}(\lambda^{(k)},\mu^{(n+1-k)})=\frac{(k+\gamma-1)!}{(k-1)!}f^{[n+\gamma]}(\lambda^{(k+\gamma)},\mu^{(n+1-k)}).
    \end{equation*} We now decompose $f^{[n+\gamma]}$ by applying the definition of divided difference functions multiple times as \begin{align*}
         f^{[n+\gamma]}(\lambda^{(k+\gamma)},\mu^{(n+1-k)})&=\frac{1}{\lambda-\mu}\left(f^{[n+\gamma-1]}(\lambda^{(k+\gamma)},\mu^{(n-k)}) - f^{[n+\gamma-1]}(\lambda^{(k+\gamma-1)},\mu^{(n+1-k)})\right) \\
        &= \dotsc \\
        &= \frac{1}{(\lambda-\mu)^\gamma} \sum_{j=0}^{\gamma} (-1)^j \binom{\gamma}{j} f^{[n]}(\lambda^{(k+\gamma-j)},\mu^{(n+1-k-(\gamma - j))}).
    \end{align*}
    Using the estimate $\|f^{[n]}\|_{\infty}\le \tfrac{\|f^{(n)}\|_{\infty}}{n!}$ from~\eqref{Eqn=DiffDiff}, we conclude \begin{align*}
        |\lambda-\mu|^{\gamma}|\partial_\lambda^{\gamma}f^{[n]}(\lambda^{(k)},\mu^{(n+1-k)})| &\le \frac{(k+\gamma-1)!}{(k-1)!} \sum_{j=0}^{\gamma} \binom{\gamma}{j} |f^{[n]}(\lambda^{(k+\gamma-j)},\mu^{(n+1-k-(\gamma - j))})| \\ &\le \frac{(k+\gamma-1)!}{(k-1)!}\sum_{j=0}^{\gamma} \binom{\gamma}{j} \frac{\|f^{(n)}\|_{\infty}}{n!} =2^{\gamma} \frac{(k+\gamma-1)!}{(k-1)!}\frac{\|f^{(n)}\|_{\infty}}{n!}.
    \end{align*}
\end{proof}
Altogether we can now show the following.

\begin{theorem}\label{thrm: divdiff_two_variables_bdd_multiplier}
    Let $n\in\mathbb{N}$, $f\in C^{n}(\mathbb{R})$, $1\le k\le n$, and $p\in(1,\infty)$. Set $\phi_f(\lambda,\mu):=f^{[n]}(\lambda^{(k)},\mu^{(n+1-k)})$. Then $$\|M_{\phi_f}\|_{S_p\to S_p}\lesssim \frac{2n+3}{n!}   p p^\ast  \|f^{(n)}\|_{\infty}.$$
\end{theorem}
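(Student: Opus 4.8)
The strategy is to apply the H\"ormander-Mikhlin-Schur multiplier theorem (Theorem~\ref{thrm: CGPT}) in dimension $d=1$, for which we need $\lfloor d/2 \rfloor + 1 = 1$, so only the zeroth and first order derivatives of $\phi_f$ in each variable enter the $\triplevert \cdot \triplevert_{\mathrm{HMS}}$ seminorm. First I would note that by Lemma~\ref{lemma: divdiff derivatives}, $\phi_f = \big( (\lambda,\mu) \mapsto f^{[n]}(\lambda^{(k)},\mu^{(n+1-k)}) \big)$ lies in $C^1(\mathbb{R}^2 \setminus \{\lambda = \mu\})$, so the hypothesis of Theorem~\ref{thrm: CGPT} is met. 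It then remains to bound
\[
\triplevert \phi_f \triplevert_{\mathrm{HMS}} = \sum_{|\gamma| \le 1} \big\| (\lambda,\mu) \mapsto |\lambda - \mu|^{|\gamma|} \big( |\partial_\lambda^\gamma \phi_f(\lambda,\mu)| + |\partial_\mu^\gamma \phi_f(\lambda,\mu)| \big) \big\|_\infty.
\]

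\textbf{Main estimate.} The $\gamma = 0$ term contributes $2\|\phi_f\|_\infty \le 2 \|f^{(n)}\|_\infty / n!$ by~\eqref{Eqn=DiffDiff}. For $|\gamma| = 1$, by the permutation invariance of divided differences it suffices to treat the $\partial_\lambda$ derivative and then add the symmetric $\partial_\mu$ contribution; I would invoke Lemma~\ref{lemma: CGPT estimates} with $\gamma = 1$. One subtlety: Lemma~\ref{lemma: CGPT estimates} requires $1 \le \min\{k, n+1-k\}$, i.e.\ $1 \le k \le n$, which is exactly the hypothesis of the present theorem, so the lemma applies to the $\partial_\lambda$ derivative (which involves $k$) and likewise, using the $\partial_\mu$ version of Lemma~\ref{lemma: divdiff derivatives} together with the symmetric form of Lemma~\ref{lemma: CGPT estimates} in the variable $\mu$ (replacing $k$ by $n+1-k$), to the $\partial_\mu$ derivative. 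Thus
\[
|\lambda - \mu| \, |\partial_\lambda \phi_f(\lambda,\mu)| \le 2 \cdot \frac{k!}{(k-1)!} \cdot \frac{\|f^{(n)}\|_\infty}{n!} = 2k \, \frac{\|f^{(n)}\|_\infty}{n!},
\]
and symmetrically $|\lambda - \mu| \, |\partial_\mu \phi_f(\lambda,\mu)| \le 2(n+1-k) \, \|f^{(n)}\|_\infty / n!$. Adding these gives a $|\gamma|=1$ contribution of $2(k + (n+1-k)) \|f^{(n)}\|_\infty/n! = 2(n+1) \|f^{(n)}\|_\infty / n!$.

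\textbf{Conclusion.} Summing the $\gamma = 0$ and $|\gamma| = 1$ contributions yields
\[
\triplevert \phi_f \triplevert_{\mathrm{HMS}} \le \big( 2 + 2(n+1) \big) \frac{\|f^{(n)}\|_\infty}{n!} = \frac{2n+4}{n!} \|f^{(n)}\|_\infty,
\]
and then Theorem~\ref{thrm: CGPT} gives $\|M_{\phi_f}\|_{S_p \to S_p} \lesssim p p^\ast \triplevert \phi_f \triplevert_{\mathrm{HMS}} \lesssim \frac{2n+4}{n!} p p^\ast \|f^{(n)}\|_\infty$; absorbing the discrepancy with $2n+3$ into the implicit constant (or re-examining the $\gamma=0$ term, where $\|\phi_f\|_\infty$ alone, without the factor $2$, may be what is intended once one notes $\partial^\gamma_\mu \phi_f = \partial^\gamma_\lambda \phi_f$ up to relabelling and the seminorm is not double-counting) recovers the stated bound. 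I do not expect a genuine obstacle here: the only care needed is bookkeeping the constant — deciding precisely how the two terms $|\partial^\gamma_\lambda \phi_f| + |\partial^\gamma_\mu \phi_f|$ in the definition of $\triplevert\cdot\triplevert_{\mathrm{HMS}}$ are counted and ensuring $f \in C^n$ (rather than $C^{n+1}$) suffices, which it does because Lemma~\ref{lemma: CGPT estimates} ultimately expresses $\partial_\lambda^\gamma \phi_f$ as a finite difference of $f^{[n]}$-values divided by powers of $\lambda - \mu$, requiring only $f \in C^n$ for the relevant range of $\gamma$.
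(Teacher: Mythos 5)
Your proposal is correct and follows essentially the same route as the paper: verify $\phi_f\in C^{1}(\mathbb{R}^2\setminus\{\lambda=\mu\})$ via Lemma~\ref{lemma: divdiff derivatives}, bound the $\gamma=0$ and $|\gamma|=1$ contributions to $\triplevert\phi_f\triplevert_{\mathrm{HMS}}$ by Lemma~\ref{lemma: CGPT estimates} (using permutation invariance for the $\partial_\mu$ term), and invoke Theorem~\ref{thrm: CGPT}. The only difference is bookkeeping of the zeroth-order term ($2\|\phi_f\|_\infty$ versus the paper's single $\|\phi_f\|_\infty$, i.e.\ $2n+4$ versus $2n+3$), which is immaterial under the $\lesssim$ in the statement.
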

\begin{proof}
    We can apply Theorem~\ref{thrm: CGPT}, since $\phi_f\in C^{1}(\mathbb{R}^2\setminus\{\lambda=\mu\})$ by Lemma~\ref{lemma: divdiff derivatives}. From Lemma~\ref{lemma: CGPT estimates}, we conclude 
    \begin{align*}
        \triplevert \phi_f \triplevert_{\mathrm{HMS}}&\le \|\phi_f\|_{\infty}+\| (\lambda,\mu)\mapsto|\lambda-\mu|\partial_\lambda \phi_f(\lambda,\mu)\|_{\infty} +\|(\lambda,\mu)\mapsto |\lambda-\mu|\partial_\mu \phi_f(\lambda,\mu)\|_{\infty}\\
        &\le (1 + 2k + 2(n+1-k))\frac{\|f^{(n)}\|_{\infty}}{n!}=\frac{2n+3}{n!}\|f^{(n)}\|_{\infty}.
    \end{align*}
\end{proof}

\begin{remark}\label{Rmk=LinearCase}
Recall that we set $\epsilon(\lambda, \mu) = \sign(\mu - \lambda)$. Under the assumptions of  Theorem
\ref{thrm: divdiff_two_variables_bdd_multiplier} if follows also that
\[
\|M_{\epsilon \phi_f}\|_{S_p\to S_p}  \lesssim \frac{2n+3}{n!}   p p^\ast  \|f^{(n)}\|_{\infty}.
\]
Indeed, $\epsilon \phi_f $   satisfies the same H\"ormander-Mikhlin differentiability criteria as $\phi_f $,  so that we may appeal again to Theorem~\ref{thrm: CGPT}.
\end{remark}

\section{Bilinear Schur multipliers that map to $S_1$} \label{Sect=S1Estimate}

The aim of this section is to estimate the last four of the bilinear Schur multipliers occuring in the six summands of \eqref{Prop=SchurDecomposition}. It turns out that these Schur multipliers are special, as they admit an $S_1$-bound.

\begin{theorem} \label{Thm=S1Estimate}
Let  $m: \mathbb{R}^2 \setminus \{ 0 \} \rightarrow \mathbb{C}$ be smooth and homogeneous with support contained in one of the four  quadrants $\sigma_1 \mathbb{R}_{>0} \times  \sigma_2 \mathbb{R}_{>0} $,  where $\sigma_j \in \{ +, -\}$.  Define $\widetilde{m}$ as in~\eqref{eqn=theta_tilde}. Then for every $1 \leq p < \infty$, $1 < p_1, p_2 < \infty$ with $\frac{1}{p_1} +\frac{1}{p_2} = \frac{1}{p}$ we have
\[
\Vert M_{\widetilde{m}}:   S_{p_1}  \times S_{p_2} \rightarrow S_p \Vert \lesssim C(m) p_1 p_1^\ast p_2 p_2^\ast,
\]
for a constant  $C(m) > 0$ only depending on $m$. 
 \end{theorem}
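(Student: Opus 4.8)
The plan is to exploit the very special structure of $\widetilde m$: it is a Toeplitz-form symbol whose generating function $m$ is homogeneous of degree $0$, smooth away from the origin, and supported in a single closed quadrant $\sigma_1\mathbb{R}_{>0}\times\sigma_2\mathbb{R}_{>0}$. The quadrant-support is the crucial hypothesis, because it means $M_{\widetilde m}(x,y)$ really only ``sees'' the part of $x$ that is upper/lower triangular in a suitable sense and likewise for $y$, and in fact the Toeplitz multiplier factors through operators whose supports (in the sense of the ordering on $\mathbb{R}$) are nested. Concretely, after the transference reduction the associated bilinear Fourier multiplier has symbol supported in $\xi_1\in\sigma_1\mathbb{R}_{>0}$, $\xi_2\in\sigma_2\mathbb{R}_{>0}$; a homogeneous degree-$0$ function supported in a quadrant, multiplied against the two one-dimensional Hilbert-transform-type projections onto those half-lines, is a bounded bilinear Fourier multiplier, and the point is that composing with the half-line Fourier projections (triangular truncations after transference) is exactly what produces an $S_1$-valued, rather than merely $S_p$-valued, estimate.

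First I would set up the transference step: by the bilinear transference principle for Toeplitz-form Schur multipliers \cite{CKV,CJKM}, $\|M_{\widetilde m}:S_{p_1}\times S_{p_2}\to S_p\|$ is controlled by the norm of the bilinear Fourier multiplier $T_m$ acting $L^{p_1}(\mathbb{R};S_{p_1})\times L^{p_2}(\mathbb{R};S_{p_2})\to L^p(\mathbb{R};S_p)$ (or the scalar-valued version together with the UMD-valued extension, which is where the $S_p$-extension and the $p p^\ast$ factors enter). Second, I would decompose $m=m\cdot(\mathbf 1_{\sigma_1\mathbb{R}_{>0}}(\xi_1)\mathbf 1_{\sigma_2\mathbb{R}_{>0}}(\xi_2))$ and write $T_m(f_1,f_2)=\widetilde T_m(P_{\sigma_1}f_1,P_{\sigma_2}f_2)$, where $P_{\sigma_j}$ is the Fourier projection onto the half-line $\sigma_j\mathbb{R}_{>0}$ (a Riesz/Hilbert projection, bounded on $L^{p_j}(S_{p_j})$ with norm $\lesssim p_j p_j^\ast$ by the UMD property of $S_{p_j}$), and $\widetilde T_m$ is a bilinear Fourier multiplier with a \emph{smooth} homogeneous symbol — which, being a standard Coifman--Meyer symbol (all the required derivative bounds $|\partial^\alpha m(\xi)|\lesssim_m |\xi|^{-|\alpha|}$ follow from homogeneity and smoothness on the compact set $\mathbb{T}\cap\mathrm{supp}\,m$), is a bounded bilinear Calderón--Zygmund/Coifman--Meyer operator with a constant $C(m)$ depending only on finitely many such derivative bounds. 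The bilinear Coifman--Meyer bound on the $S_p$-valued scale then costs another factor of the form $p_i p_i^\ast$; transferring back, and multiplying the three polynomial-in-$p$ factors, gives the claimed $\lesssim C(m)\, p_1 p_1^\ast p_2 p_2^\ast$. To get the endpoint $p=1$ (i.e.\ the map into $S_1$ rather than $S_p$ with $p>1$), I would note that once both inputs have been projected by $P_{\sigma_1}$, $P_{\sigma_2}$ the remaining smooth multiplier maps $L^{p_1}(S_{p_1})\times L^{p_2}(S_{p_2})\to L^1(S_1)$ by Hölder in the operator variable combined with the Coifman--Meyer bound at the scalar level, using $1<p_1,p_2<\infty$, $1/p_1+1/p_2=1$; the Toeplitz-transference identity then descends the $L^1(S_1)$-bound to an $S_1$-bound for $M_{\widetilde m}$. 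Alternatively, and more in the spirit of the paper, I would observe that the four bilinear symbols in question, after transference, literally become (compositions of) triangular truncations composed with smooth multipliers, and the classical fact that a product of a ``lower-triangular'' Schatten-$p_1$ operator with an ``upper-triangular'' Schatten-$p_2$ operator lies in $S_p$ with the right norm, combined here with the \emph{nesting} forced by the quadrant support, upgrades $S_p$ to $S_1$.

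The main obstacle I anticipate is twofold. First, keeping the constant honest: the transference statements of \cite{CKV,CJKM} and the bilinear UMD-valued Coifman--Meyer theorem need to be invoked in a form that produces an explicit, polynomial-in-$(p_1,p_2)$ dependence of the type $\beta_{p_i}=p_i p_i^\ast$; for the smooth-symbol piece this should follow since the bilinear Calderón--Zygmund operator here is a \emph{Fourier} multiplier, so the paraproducts in the bilinear dyadic representation vanish (exactly as the paper emphasizes in its discussion of \cite{DiPlinioMathAnn}), and only the shift/cancellative parts contribute — those are the ones with the controlled $p_i p_i^\ast$ dependence. Second, and more delicate, is the endpoint $p=1$: the bilinear Coifman--Meyer operator does \emph{not} map $L^{p_1}\times L^{p_2}\to L^1$ in general with a symbol merely supported in a quadrant unless one genuinely uses the half-line Fourier projections to reduce to a one-sided situation; I would therefore be careful to extract the $S_1$-gain specifically from the quadrant hypothesis (writing $m=\sigma\cdot m$ where $\sigma$ is a product of half-line indicators and absorbing the indicators into the inputs), rather than hoping for it from an abstract multilinear CZ theorem. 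Handling the subtlety that $\mathrm{supp}\,m$ is in a \emph{closed} quadrant — so it may touch the coordinate axes where $\xi_1=0$ or $\xi_2=0$, on which the half-line projections are discontinuous — requires a small mollification/limiting argument (or using that $m$ is smooth and homogeneous, hence its restriction to a neighbourhood of the axis is irrelevant after the projection), which is routine but worth stating explicitly.
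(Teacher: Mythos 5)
Your route (transference to a bilinear Fourier multiplier, half-line Fourier projections, then a vector-valued Coifman--Meyer/Calder\'on--Zygmund bound) has a genuine gap exactly at the point that makes this theorem special: the case $p=1$ and, more generally, the absence of any $\beta_p$-factor in the constant. The vector-valued bilinear CZ theorem you would invoke (the result of Di Plinio et al., Theorem~\ref{thrm: diplinio_statement_bilinear}) requires \emph{all} exponents, including the target, to lie in $(1,\infty)$; $S_1$ is not UMD and there is no $L^1(\mathbb{R},S_1)$-valued version to transfer. Your proposed fix --- ``H\"older in the operator variable combined with the Coifman--Meyer bound at the scalar level'' --- is not an argument: a bilinear Fourier multiplier does not act pointwise as a product of two linear operators applied separately to $f_1$ and $f_2$, so there is nothing to apply H\"older to unless the symbol is first \emph{factorized} into products $a(\xi_1)b(\xi_2)$. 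The same problem undermines your triangular-truncation alternative: $M_{\widetilde m}(x,y)$ is not of the form ``(Schur multiplier of $x$)$\cdot$(Schur multiplier of $y$)'' for a general Toeplitz symbol, and triangular truncations would anyway cost extra $\beta$-factors. Moreover, even for $p>1$ your bookkeeping does not produce the claimed bound: the projections contribute $\beta_{p_1}\beta_{p_2}$ and the vector-valued CM/CZ bound contributes the full $C(p,p_1,p_2)$ of \eqref{Eqn=CFunction} (which contains $\beta_p$ and blows up as $p\searrow 1$), so you would get strictly more than $C(m)\,\beta_{p_1}\beta_{p_2}$ --- precisely the loss that Section~\ref{Sect=S1Estimate} is designed to avoid.

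The missing idea is the factorization itself, and it is where the quadrant support and homogeneity are really used. Since $m$ is homogeneous with closed support inside an open quadrant, $\rho(\lambda):=m(\lambda,1)$ is a compactly supported smooth function on $\sigma_1(0,\infty)$, so $t\mapsto\rho(\sigma_1 e^t)$ is Schwartz and Fourier inversion in the logarithmic variable gives
\begin{equation*}
m(\xi_1,\xi_2)=\int_{\mathbb{R}} g(s)\,\vert\xi_1\vert^{is}\,\xi_2^{-is}\,ds ,
\end{equation*}
with $g$ Schwartz. This writes $\widetilde m$ as an integral of \emph{product-form} Toeplitz symbols, hence
\begin{equation*}
M_{\widetilde m}(x,y)=\int_{\mathbb{R}} g(s)\, M_{k_s^1}(x)\, M_{k_{-s}^2}(y)\,ds ,
\end{equation*}
where each $k_s^j$ is a linear symbol with $\triplevert k_s^j\triplevert_{\mathrm{HMS}}=1+2\vert s\vert$. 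Now operator H\"older $S_{p_1}\cdot S_{p_2}\subseteq S_p$ (valid down to $p=1$) plus the linear H\"ormander--Mikhlin--Schur theorem (Theorem~\ref{thrm: CGPT}) for each factor yields the bound $C(m)\,\beta_{p_1}\beta_{p_2}$ with $C(m)\approx\int\vert g(s)\vert(1+2\vert s\vert)^2\,ds$; no bilinear transference and no bilinear CZ machinery is needed for this theorem at all. I encourage you to rework the proof along these lines.
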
 
 \begin{proof}
For simplicity assume that $\sigma_2 = +$, as the other case can be treated similarly. Set then  $\rho(\lambda) = m(\lambda, 1)$, $\lambda \in \mathbb{R}$. Then $\rho(\xi_1/\xi_2) = m(\xi_1/\xi_2, 1) = m(\xi_1, \xi_2)$, where the last equality follows as~$m$ is homogeneous and supported on $(\xi_1, \xi_2)$ with $\xi_2$ positive. Further, note once more that  $m$ is homogeneous and thus constant on rays. Since its support is a closed set contained in the quadrant  $\sigma_1 \mathbb{R}_{>0} \times    \mathbb{R}_{>0}$, it must thus be a proper radial subsector of that quadrant.  Therefore,  it follows that $\rho$ has compact support contained in $\sigma_1 (0, \infty)$. In particular, $\rho$ is a compactly supported  Schwartz function. 
 
It follows that the function $t \mapsto \rho(\sigma_1 e^t)$, $t \in \mathbb{R}$ is Schwartz. So using  Fourier inversion we write 
\[
\rho( \sigma_1 e^t ) = \int_\mathbb{R} g(s) e^{ist} ds
\]
with $g$ a Schwartz function.  Substitute $t = \log(\sigma_1 \xi_1/\xi_2)$, where $(\xi_1, \xi_2) \in  \sigma_1 \mathbb{R}_{>0} \times    \mathbb{R}_{>0} $. This gives 
\[
m(\xi_1, \xi_2) = \rho( \frac{\xi_1}{\xi_2}) = \int_\mathbb{R} g(s) \vert \xi_1 \vert^{is}  \xi_2^{-is} ds, \quad (\xi_1, \xi_2) \in  \sigma_1 \mathbb{R}_{>0} \times    \mathbb{R}_{>0}. 
\]
Let 
\[
\begin{split}
k_s^1(\lambda_0, \lambda_1) = &  \left\{
\begin{array}{ll}
\vert \lambda_1 - \lambda_0 \vert^{is}, &  \textrm{ if }   \sigma_1 (\lambda_1 - \lambda_0) >0, \\
0, & \textrm{ otherwise.}
\end{array} 
\right. \\
k_s^2(\lambda_1, \lambda_2) = &  \left\{
\begin{array}{ll}
(\lambda_2 - \lambda_1)^{is}, & \textrm{ if }    (\lambda_2 - \lambda_1) >0, \\
0, & \textrm{ otherwise.}
\end{array} 
\right.
\end{split}
\] 
Hence
\[
\widetilde{m}(\lambda_0, \lambda_1, \lambda_2) = \int_\mathbb{R} g(s)  k_s^1(\lambda_0, \lambda_1) k_{-s}^2(\lambda_1, \lambda_2)    ds.
\]
It then follows that 
\[
M_{\widetilde{m} }(x,y) = \int_\mathbb{R} g(s) M_{k_s^1  }(x)   M_{  k_{-s}^2 }(y) ds. 
\]
Note that $\triplevert k_s^1 \triplevert_{{\rm HMS}} = \triplevert k_s^2 \triplevert_{{\rm HMS}} = 1+2 \vert s \vert$.  
Thus  by Theorem~\ref{thrm: CGPT},
\[
\begin{split}
\Vert M_{\widetilde{m}}:   S_{p_1}  \times S_{p_2} \rightarrow S_p \Vert
\lesssim & \int_\mathbb{R}  g(s) (1+ 2\vert s \vert)^2  ds  \: p_1 p_1^\ast p_2 p_2^\ast. 
\end{split}
\]
This concludes the proof. 
 \end{proof}

 For the following corollary we recall the notation from Proposition~\ref{Prop=SchurDecomposition}. 

\begin{corollary} \label{Cor=S1Estimate}
Let  $a_3 := \epsilon_3 \widetilde{\theta}_2 \psi_2$, $a_4 := \epsilon_1 \widetilde{\theta}_2 (1 - \psi_2)$, $a_5 := \epsilon_2 \widetilde{\theta}_3 \psi_3$ and $a_6 := \epsilon_3 \widetilde{\theta}_3(1- \psi_3)$.  Then for every $1 \leq p < \infty, 1 < p_1, p_2 < \infty$ with $\frac{1}{p_1} +\frac{1}{p_2} = \frac{1}{p}$ we have
\[
\Vert M_{a_j}:   S_{p_1}  \times S_{p_2} \rightarrow S_p \Vert \lesssim C(a_j) p_1 p_1^\ast p_2 p_2^\ast, \quad 3 \leq j \leq 6,
\]
for a constant $C(a_j) >0$ only depending on $a_j$.
 \end{corollary}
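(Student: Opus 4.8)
\textbf{Proof plan for Corollary~\ref{Cor=S1Estimate}.}

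The plan is to deduce this directly from Theorem~\ref{Thm=S1Estimate} by verifying that each of the four symbols $a_3, a_4, a_5, a_6$ satisfies the hypotheses of that theorem, namely that (after passing to the associated two-variable function via the substitution in~\eqref{eqn=theta_tilde}) each $a_j$ is of the form $\widetilde{m}$ for some smooth homogeneous function $m$ on $\mathbb{R}^2\setminus\{0\}$ whose support is contained in a single open quadrant $\sigma_1\mathbb{R}_{>0}\times\sigma_2\mathbb{R}_{>0}$. First I would recall that each $a_j$ is, by its very definition in Proposition~\ref{Prop=SchurDecomposition}, a product of three functions: a sign factor $\epsilon_i$, one of the cutoff functions $\widetilde{\theta}_2$ or $\widetilde{\theta}_3$, and either $\psi_2$, $\psi_3$, $1-\psi_2$ or $1-\psi_3$. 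Writing $\lambda = (\lambda_0,\lambda_1,\lambda_2)$ and $\xi = (\xi_1,\xi_2) = (\lambda_1-\lambda_0,\lambda_2-\lambda_1)$, every one of these factors is a function of $\xi$ alone: indeed $\widetilde{\theta}_j(\lambda) = \theta_j(\xi)$ by~\eqref{eqn=theta_tilde}, the functions $\psi_2,\psi_3$ are Toeplitz and homogeneous (e.g.\ $\psi_2 = \tfrac{\lambda_2-\lambda_0}{\lambda_2-\lambda_1} = \tfrac{\xi_1+\xi_2}{\xi_2}$ and $\psi_3 = \tfrac{\lambda_1-\lambda_2}{\lambda_1-\lambda_0} = \tfrac{-\xi_2}{\xi_1}$), and the sign factors $\epsilon_i$ are likewise functions of $\xi$ (e.g.\ $\epsilon_1 = \sign(\lambda_1-\lambda_0) = \sign(\xi_1)$, $\epsilon_2 = \sign(\xi_2)$, $\epsilon_3 = \sign(\xi_1+\xi_2)$). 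Hence $a_j = \widetilde{m}_j$ with $m_j$ the corresponding product of functions of $\xi$.

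The key step is then to check, for each $j\in\{3,4,5,6\}$, that $m_j$ is smooth and homogeneous of degree $0$ on its support and that this support sits inside one quadrant. Homogeneity of degree $0$ is clear: $\theta_j$ is homogeneous by construction, the quotients $\psi_2,\psi_3$ and their complements $1-\psi_2,1-\psi_3$ are ratios of homogeneous degree-one functions hence homogeneous of degree $0$, and the sign factors are homogeneous of degree $0$ as well; so each product is homogeneous of degree $0$. For the support and smoothness, I would use that $\supp\theta_2\subseteq\Delta_{2,\epsilon}$ and $\supp\theta_3\subseteq\Delta_{3,\epsilon}$, where (recalling~\eqref{Eqn=Regions} and the definitions of $A_{2,\epsilon}, A_{3,\epsilon}$) $\Delta_{2,\epsilon}$ and $\Delta_{3,\epsilon}$ are narrow radial sectors around the arguments $\tfrac{5\pi}{8}$ and $\tfrac{7\pi}{8}$ (together with their antipodes), both of which lie strictly inside the open second quadrant $\mathbb{R}_{<0}\times\mathbb{R}_{>0}$ (and its antipode $\mathbb{R}_{>0}\times\mathbb{R}_{<0}$) for $\epsilon$ small enough. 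So $\theta_2,\theta_3$ are already supported in one pair of opposite quadrants; it remains to observe that multiplying by the sign factor $\epsilon_i$ restricts the support further to a single quadrant. Concretely, for $a_3 = \epsilon_3\widetilde{\theta}_2\psi_2$ the factor $\epsilon_3 = \sign(\xi_1+\xi_2)$ is $\pm 1$, but the point is that $a_j$ is even (as explained in Section~\ref{sect: decomposition fn}, $\theta_j$ is even and the product $f^{[2]} = \sum a_j\cdot(\text{two-variable term})$ decomposes into even pieces times even pieces) so we may split $m_j = m_j^+ + m_j^-$ into its restrictions to $\{\xi_1 < 0\}$ and $\{\xi_1 > 0\}$; each of these restrictions is supported in a single quadrant, is smooth there (on the support of $\theta_2$ the denominators $\xi_2$ of $\psi_2$ and $\xi_1$ of $\psi_3$ are bounded away from $0$, as noted in the paragraph following the definition of $\psi_i$ in Section~\ref{sect: decomposition fn}, so $\psi_2,\psi_3,1-\psi_2,1-\psi_3$ are smooth on the relevant sectors, and $\epsilon_i$ is constant there), and homogeneous of degree $0$. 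Applying Theorem~\ref{Thm=S1Estimate} to $m_j^+$ and $m_j^-$ separately and adding the two resulting estimates (with $C(a_j) := C(m_j^+) + C(m_j^-)$) gives the claim.

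I expect the only genuine obstacle to be the bookkeeping: one must confirm, region by region, that on the support of $\widetilde{\theta}_2$ the relevant sign functions $\epsilon_3$ (for $a_3$) and $\epsilon_1$ (for $a_4$) are in fact \emph{constant}, and similarly that $\epsilon_2$ (for $a_5$) and $\epsilon_3$ (for $a_6$) are constant on the support of $\widetilde{\theta}_3$ — in which case the sign factor merely contributes a global $\pm 1$ and no splitting is even needed, or if it is not constant, that the support still decomposes into finitely many subsectors each inside a single quadrant on which the sign is constant. Since $\Delta_{2,\epsilon}$ and $\Delta_{3,\epsilon}$ are explicitly described sectors, this is a finite check with the angle conditions, and the constants $C(a_j)$ come out in terms of $C(m_j^\pm)$, hence depend only on $a_j$ as claimed. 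This finishes the proof modulo these elementary geometric verifications.
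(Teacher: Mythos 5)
Your overall route is the same as the paper's: view each $a_j$ through~\eqref{eqn=theta_tilde} as a function $m_j$ of $\xi=(\lambda_1-\lambda_0,\lambda_2-\lambda_1)$, check homogeneity, smoothness and support in the off-diagonal quadrants, split the antipodal support into its two single-quadrant components, and apply Theorem~\ref{Thm=S1Estimate} to each piece; the paper's proof is exactly this, stated in one line. Your identifications $\psi_2=(\xi_1+\xi_2)/\xi_2$, $\psi_3=-\xi_2/\xi_1$, $\epsilon_1=\sign(\xi_1)$, $\epsilon_2=\sign(\xi_2)$, $\epsilon_3=\sign(\xi_1+\xi_2)$ are correct, and for $a_4$ and $a_5$ the deferred ``bookkeeping'' does come out as you hope: $\sign(\xi_1)$ is constant on each component of $\Delta_{2,\epsilon}$ and $\sign(\xi_2)$ on each component of $\Delta_{3,\epsilon}$, so those two symbols are smooth on their supports and the argument closes. (Minor slip: the $a_j$ are odd, not even --- the $\epsilon_i$ were inserted precisely to make them odd --- but parity plays no role in the splitting.)

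The gap is in the remaining two cases, $a_3$ and $a_6$. The sign $\epsilon_3=\sign(\xi_1+\xi_2)$ is \emph{not} constant on the components of $\Delta_{2,\epsilon}$ or $\Delta_{3,\epsilon}$: it jumps across the ray $\arg\xi=3\pi/4$ (and its antipode), which by~\eqref{Eqn=Regions} lies in the interior of both $A_{2,\epsilon}$ and $A_{3,\epsilon}$; moreover $\theta_1$ vanishes on that ray, so $\theta_2+\theta_3=1$ there and at least one of $\theta_2,\theta_3$ is nonzero near it. Near that ray one has $m_3=\theta_2\,\vert\xi_1+\xi_2\vert/\xi_2$ and $m_6=\theta_3\,\vert\xi_1+\xi_2\vert/\xi_1$, which are Lipschitz but not $C^1$, because $\psi_2$ and $1-\psi_3$ vanish only to first order exactly where $\epsilon_3$ jumps. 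Your fallback --- cutting the support further into subsectors on which the sign is constant --- keeps each piece inside a single quadrant but destroys smoothness at the cut for the same reason, and Theorem~\ref{Thm=S1Estimate} cannot be invoked verbatim for such pieces: its proof needs $\rho(\lambda)=m(\lambda,1)$ to be Schwartz so that $\int_{\mathbb{R}}\vert g(s)\vert(1+2\vert s\vert)^2\,ds<\infty$, while a first-order kink only yields $g(s)=O(\vert s\vert^{-2})$. So for $a_3,a_6$ an additional argument is required, for instance treating the factor $\epsilon_3$ separately or using that in~\eqref{Eqn=MainDecompositionStart} it cancels against the sign $\epsilon(\lambda_0,\lambda_2)$ in the accompanying linear factor (which is why the paper remarks the signs are ``not needed'' for the Section~\ref{Sect=S1Estimate} estimates). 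To be fair, the paper's own proof simply asserts that all four $a_j$ are smooth and does not address this point either; but since your plan explicitly reduces everything to the sign-constancy check, its failure for $a_3,a_6$, together with the non-smoothness of the pieces produced by your fallback, is a genuine hole in the proposal as written.
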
 
 \begin{proof}
Each of the functions $a_j$ is smooth, homogeneous, of Toepliz form, and supported on one of the two quadrants $-\sigma \mathbb{R}_{>0} \times \sigma \mathbb{R}_{>0}$ with $\sigma  \in \{+, -\}$.\ Therefore the conclusion follows from Theorem~\ref{Thm=S1Estimate}. 
 \end{proof}

\begin{remark}
The constant $C(a_j)$ depends in particular on the choice of $\epsilon >0$ in Section~\ref{sect: decomposition fn}, see~\eqref{Eqn=Regions}. Note that we cannot expect a  bound as in  Corollary~\ref{Cor=S1Estimate}  that is  uniform as $\epsilon \searrow 0$, since in~\cite[Theorem 5.3]{CKV} and its proof it is shown that  such Schur multipliers do not map to $S_1$. 
\end{remark}

\section{Bilinear transference}\label{Sect=Bilinear}
The aim of this section is to estimate the remaining bilinear terms occuring in Proposition~\ref{Prop=SchurDecomposition}. The crucial observation is that these multipliers are of Toeplitz form and therefore, using bilinear transference techniques, can be estimated by Fourier multipliers and Calder\'on-Zygmund operators.

\subsection{Bilinear Calder\'on-Zygmund operators and Fourier multipliers}

We say $K: \mathbb{R}^2 \rightarrow \mathbb{C}$ satisfies the \emph{size condition} if for some constant $C_1>0$ we have
\begin{equation}\label{Eqn=Size}
\vert K(z) \vert \leq \frac{C_1}{\vert z \vert^2}, \quad z \in \mathbb{R}^2 \setminus \{ 0 \}.
\end{equation}
We say that $K$ satisfies the \emph{smoothness condition} if $K$ is continuously differentiable on $\mathbb{R}^2 \setminus \{ 0 \}$ and there exists some constant $C_2 > 0$ such that
\begin{equation}\label{Eqn=Smooth}
\vert \nabla K(z) \vert \leq \frac{C_2}{\vert z \vert^3}, \quad z \in \mathbb{R}^2 \setminus \{ 0 \}.
\end{equation}
Set $\widetilde{K}(x,y,z) = K(x-y, x-z)$, $x,y,z \in \mathbb{R}$. If $K$ satisfies~\eqref{Eqn=Size} and~\eqref{Eqn=Smooth}, then
\begin{equation}\label{Eqn=Size2}
\vert \widetilde{K}(x,y,z) \vert \leq \frac{C_1}{ (\vert x-y  \vert + \vert x - z \vert)^2}, \quad (x,y,z) \in \mathbb{R}^3 \setminus \Delta,
\end{equation}
and it follows  from the chain rule that
\begin{equation}\label{Eqn=Smooth2}
\vert \nabla \widetilde{K}(x,y,z) \vert \leq \frac{C_2}{ (\vert x-y  \vert + \vert x - z \vert)^3}, \quad (x,y,z) \in \mathbb{R}^3 \setminus \Delta.
\end{equation}
It is assumingly well-known  (see e.g.\ the introduction of~\cite{GrafakosTorres}) that Condition~\ref{Eqn=Smooth2} implies the following more general condition. We provide a proof for completeness as we did not find it in the literature.

\begin{lemma}\label{Lemma=KernelEasierCondition}
Suppose that $K$ satisfies  the smoothness condition.
Let $(x_1,x_2,x_3) \in \mathbb{R}^3 \setminus \Delta$.  Let  $\widetilde{x}_j \in \mathbb{R}$, $j =1,2,3$, be such that
\begin{equation}\label{Eqn=CZDomainCondition}
\vert x_j - \widetilde{x}_j \vert \leq \frac{1}{2} \max(  \vert x_1 - x_2 \vert, \vert x_1 - x_3 \vert ).
\end{equation} 
Then,
\[
\begin{split}
\vert  \widetilde{K}(x_1, x_2, x_3) -   \widetilde{K}(\widetilde{x}_1, x_2, x_3 ) \vert \lesssim &  \frac{  \vert x_1 - \widetilde{x}_1 \vert } { (\vert x_1 - x_2  \vert + \vert x_1 - x_3 \vert )^3 },\\
\vert   \widetilde{K}(x_1, x_2, x_3) -   \widetilde{K}(x_1, \widetilde{x}_2, x_3 ) \vert \lesssim    &  \frac{  \vert x_2 - \widetilde{x}_2 \vert } { (\vert x_1 - x_2  \vert + \vert x_1 - x_3 \vert )^3 },\\
  \vert   \widetilde{K}(x_1, x_2, x_3) -   \widetilde{K}(x_1, x_2, \widetilde{x}_3 ) \vert  \lesssim &  \frac{  \vert x_3 - \widetilde{x}_3 \vert } { (\vert x_1 - x_2  \vert + \vert x_1 - x_3 \vert )^3 }.
\end{split}
\]
\end{lemma}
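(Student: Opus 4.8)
The plan is to prove each of the three inequalities by reducing to the mean value theorem applied to $\widetilde{K}$ along a single coordinate, using the smoothness bound~\eqref{Eqn=Smooth2} and a standard comparison argument for the denominators. First I would treat the third inequality, perturbing the $x_3$-variable, since this is the cleanest case; the other two follow the same pattern up to relabeling (and for the $x_1$-variable one must perturb in the first argument of $\widetilde{K}$, which enters both slots of $K$, but since~\eqref{Eqn=Smooth2} already controls $\nabla \widetilde{K}$ directly this makes no real difference).

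For the third inequality: set $x = x_1$, $y = x_2$, $z = x_3$, $\widetilde{z} = \widetilde{x}_3$, and consider the function $t \mapsto \widetilde{K}(x,y,t)$ on the segment between $z$ and $\widetilde{z}$. By the condition~\eqref{Eqn=CZDomainCondition} we have $|z - \widetilde{z}| \le \tfrac12 \max(|x-y|,|x-z|)$, which I want to use to ensure that for every $t$ on this segment the point $(x,y,t)$ stays away from the diagonal $\Delta$ and moreover $|x - y| + |x - t|$ is comparable to $|x-y| + |x-z|$. Concretely, for $t$ between $z$ and $\widetilde z$ we have $|x - t| \ge |x - z| - |z - \widetilde z| \ge |x-z| - \tfrac12\max(|x-y|,|x-z|)$; combining this with the trivial bound $|x-y|+|x-t| \ge |x-y|$ and splitting into the cases $\max(|x-y|,|x-z|) = |x-y|$ and $=|x-z|$, one obtains $|x-y| + |x-t| \gtrsim |x-y| + |x-z| > 0$ in both cases (the point is never on the diagonal since $(x_1,x_2,x_3) \notin \Delta$ forces $|x-y| + |x-z| > 0$, and this lower bound is preserved). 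Then by the fundamental theorem of calculus along the segment,
\[
|\widetilde{K}(x,y,z) - \widetilde{K}(x,y,\widetilde z)| \le |z - \widetilde z| \sup_{t} |\partial_3 \widetilde{K}(x,y,t)| \le |z - \widetilde z| \sup_t \frac{C_2}{(|x-y| + |x-t|)^3} \lesssim \frac{|z - \widetilde z|}{(|x-y| + |x-z|)^3},
\]
using~\eqref{Eqn=Smooth2} and the comparability just established.

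For the second inequality one perturbs the $y = x_2$-variable instead; since $|x - y|$ can itself be small or even zero, one uses $|x-y| + |x-z| > 0$ together with~\eqref{Eqn=CZDomainCondition} (now with $|y - \widetilde y| \le \tfrac12 \max(|x-y|, |x-z|)$) to show $|x - \widetilde y| + |x - z| \gtrsim |x - y| + |x - z|$ along the segment: if $\max = |x-z|$ this is clear since the $|x-z|$ term alone dominates, and if $\max = |x-y|$ then $|x - t| \ge |x-y| - \tfrac12|x-y| = \tfrac12|x-y|$ for $t$ on the segment. The first inequality perturbs $x_1$; here one applies the mean value theorem directly to $s \mapsto \widetilde{K}(s, x_2, x_3)$ and bounds $|\partial_1 \widetilde K| \le |\nabla \widetilde K| \le C_2/(|x-y|+|x-z|)^3$ via~\eqref{Eqn=Smooth2}, with the same kind of elementary estimate showing $|s - x_2| + |s - x_3| \gtrsim |x_1 - x_2| + |x_1 - x_3|$ for $s$ between $x_1$ and $\widetilde x_1$. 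The only mildly delicate point — and the one I would be most careful about — is this denominator comparison near the diagonal, i.e. checking in each of the cases that the perturbation hypothesis~\eqref{Eqn=CZDomainCondition} really does prevent $|x_1 - x_2| + |x_1 - x_3|$ (or its perturbed analogue) from degenerating; everything else is a routine application of the mean value theorem to~\eqref{Eqn=Smooth2}.
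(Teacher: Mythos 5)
Your proposal is correct and follows essentially the same route as the paper: the mean value theorem (or fundamental theorem of calculus) along the perturbed coordinate, the gradient bound~\eqref{Eqn=Smooth2}, and an elementary case analysis on $\max(|x_1-x_2|,|x_1-x_3|)$ to show the denominator along the segment is comparable to $|x_1-x_2|+|x_1-x_3|$. The only cosmetic difference is that you verify directly that the segment stays off the diagonal $\Delta$ via this comparison, whereas the paper instead reduces to the dense set $\{x_2 \neq x_3\}$ by continuity and only writes out the first estimate.
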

\begin{proof}
We only prove the first estimate, the other two are proved in a similar way. It suffices to prove the case  $x_2 \not = x_3$, since $\mathbb{R}^3  \setminus \{ x_2 = x_3 \}$ is a dense subset of $\mathbb{R}^3 \setminus \Delta$ and  $K$ is continuous.  Take $x_1'$ in the interval $[x_1, \widetilde{x}_1]$ (or in $[ \widetilde{x}_1, x_1]$ in case $x_1 > \widetilde{x}_1$ ) such that
\[
\begin{split}
\vert  \widetilde{K}(x_1, x_2, x_3) -   \widetilde{K}(\widetilde{x}_1, x_2, x_3 ) \vert =  &
 \vert x_1 - \widetilde{x}_1 \vert \vert \partial_1  \widetilde{K} (x_1', x_2, x_3)  \vert.
\end{split}
\]
But then the assumptions~ \eqref{Eqn=Size2} and~\eqref{Eqn=CZDomainCondition} imply that
\begin{align*}
    \vert  \widetilde{K}(x_1, x_2, x_3) -   \widetilde{K}(\widetilde{x}_1, x_2, x_3 ) \vert \lesssim 
  \frac{  \vert x_1 - \widetilde{x}_1 \vert } { (\vert {x_1'} - x_2  \vert + \vert x_1 - x_3 \vert )^3 } 
\lesssim  \frac{  \vert x_1 - \widetilde{x}_1 \vert } { (\vert x_1 - x_2  \vert + \vert x_1 - x_3 \vert )^3 },
\end{align*}
 where the second inequality follows from~\eqref{Eqn=CZDomainCondition} since \begin{equation*}
    |x_1-x_2| \le |x_1-x_1'|+|x_1'-x_2| \le |x_1-\widetilde{x}_1|+|x_1'-x_2| \le \frac{1}{2}|x_1-x_2| + |x_1'-x_2|.
\end{equation*}
\end{proof}

Lemma~\ref{Lemma=KernelEasierCondition} shows that the conditions~\eqref{Eqn=Size} and~\eqref{Eqn=Smooth} imply that the kernel $\widetilde{K}$ satisfies the size and smoothness conditions appearing in~\cite{DiPlinioMathAnn}. Next, we show that for bilinear Fourier multipliers with odd homogeneous symbols, their associated Calder\'on-Zygmund kernels satisfy these criteria. Recall that the Fourier transform $\mathcal{F}$ was defined in the preliminaries~\eqref{Eqn=FourierTransform} in a distributional sense.

\begin{proposition}\label{Prop=SizeSmooth}
Let $m: \mathbb{R}^2 \setminus \{ 0 \} \rightarrow \mathbb{R}$ be smooth and odd homogeneous, and set ${m(0,0) = 0}$.  Then $\mathcal{F}m: \mathbb{R}^2  \rightarrow \mathbb{C}$ is a function satisfying conditions~\eqref{Eqn=Size},~\eqref{Eqn=Smooth}, and ${(\mathcal{F}m)(0,0)=0}$.
\end{proposition}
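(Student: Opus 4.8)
The plan is to compute $\mathcal{F}m$ explicitly using the homogeneity-degree $0$ and oddness of $m$, and read off the two kernel bounds from the resulting formula. Recall the classical fact (see e.g. Grafakos) that the Fourier transform of a function on $\mathbb{R}^d$ that is homogeneous of degree $0$ and, say, smooth away from the origin is homogeneous of degree $-d$ away from the origin, with a possible additional Dirac mass at $0$; the oddness of $m$ forces that Dirac mass to vanish and also forces $\mathcal{F}m$ to be locally integrable near $0$ rather than a genuine principal-value distribution. So the first step is to write $m$ in polar form, $m(\xi) = \Omega(\xi/|\xi|)$ with $\Omega: S^1 \to \mathbb{R}$ smooth and odd, decompose $\Omega$ into its Fourier series on the circle (only odd harmonics $e^{ik\theta}$, $k$ odd, appear), and recall that each homogeneous harmonic $Y_k(\xi/|\xi|)$ (degree $0$) has Fourier transform a constant times $Y_k(x/|x|)|x|^{-2}$ — this is the two-dimensional instance of the Bochner–Hecke formula / the computation behind higher Riesz transforms. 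Summing over $k$ (with the rapid decay of the Fourier coefficients of the smooth function $\Omega$ controlling convergence and differentiation term by term) gives $\mathcal{F}m(x) = \Psi(x/|x|)|x|^{-2}$ on $\mathbb{R}^2 \setminus \{0\}$ for some smooth $\Psi$ on $S^1$.

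From that explicit form the three claims are immediate. The size condition~\eqref{Eqn=Size} holds with $C_1 = \|\Psi\|_\infty$. For the smoothness condition~\eqref{Eqn=Smooth}, differentiating $\Psi(x/|x|)|x|^{-2}$ via the chain rule produces terms bounded by $C(\|\Psi\|_\infty + \|\nabla_{S^1}\Psi\|_\infty)|x|^{-3}$, giving~\eqref{Eqn=Smooth}. The condition $(\mathcal{F}m)(0,0) = 0$ is really the statement that there is no Dirac mass at the origin: this follows because the $k=0$ (radial) Fourier coefficient of $\Omega$ is zero by oddness, and the constant/Dirac contribution to $\mathcal{F}m$ comes precisely from that radial part. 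Equivalently, one notes $m(-\xi) = -m(\xi)$ implies $(\mathcal{F}m)(-x) = -(\mathcal{F}m)(x)$, so $\mathcal{F}m$ is an odd tempered distribution, hence cannot have a nonzero multiple of $\delta_0$ in it, and its homogeneous-degree-$(-2)$ part is locally integrable away from $0$; since away from the origin it agrees with the locally integrable function $\Psi(x/|x|)|x|^{-2}$ (which is genuinely in $L^1_{loc}$ near $0$ in $\mathbb{R}^2$ — wait, it is not; $|x|^{-2}$ is not locally integrable in $\mathbb{R}^2$), one must be slightly more careful: the correct statement is that $\mathcal{F}m$ is the principal-value distribution associated to $\Psi(x/|x|)|x|^{-2}$ plus a constant, and oddness kills both the constant and makes the principal value well-defined (the mean-zero condition $\int_{S^1}\Psi = 0$, which follows again from oddness, is exactly what is needed). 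For the purposes of this proposition, however, only the pointwise behaviour of $\mathcal{F}m$ as a function on $\mathbb{R}^2\setminus\{0\}$ matters, together with the absence of a point mass at $0$, both of which the polar-coordinate computation delivers.

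I would organize the write-up as: (i) reduce to $m = Y_k$ a single homogeneous harmonic by Fourier series on $S^1$, invoking rapid decay of coefficients for the interchange of summation with $\mathcal{F}$ and with differentiation; (ii) cite or quickly derive that $\mathcal{F}Y_k(x) = c_k Y_k(x/|x|)|x|^{-2}$ with $|c_k|$ growing at most polynomially in $k$ (Bochner's relation for the Fourier transform of solid harmonics times radial functions, specialized to the degenerate radial profile); (iii) assemble $\mathcal{F}m(x) = \Psi(x/|x|)|x|^{-2}$ and verify~\eqref{Eqn=Size},~\eqref{Eqn=Smooth} by the chain rule; (iv) dispatch $(\mathcal{F}m)(0,0)=0$ via the oddness/no-point-mass argument. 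The main obstacle I anticipate is purely bookkeeping: making the harmonic-decomposition argument rigorous as an identity of tempered distributions rather than just of functions — in particular justifying that the distribution $\mathcal{F}m$ has no singular part supported at $\{0\}$ and coincides on $\mathbb{R}^2\setminus\{0\}$ with the claimed smooth homogeneous function. A clean alternative that sidesteps the harmonic expansion is to write $m$ as a finite linear combination of (iterated) Riesz-transform-type symbols $\xi_1^a\xi_2^b/|\xi|^{a+b}$ modulo a smooth compactly-supported-near-$S^1$ correction handled by Schwartz theory, but the harmonic route is the most transparent and is the one I would present.
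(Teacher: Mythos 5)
Your proposal is correct and follows essentially the same route as the paper: both expand the symbol in circular harmonics $e^{ik\theta}$ (the paper writes these as $g_k(z)=z^k/|z|^k$), use that $\mathcal{F}g_k$ is $\frac{|k|}{2\pi i^k}\,z^k/|z|^{k+2}$ with $(\mathcal{F}g_k)(0)=0$ for $k\neq 0$ (your Bochner--Hecke step, cited in the paper from~\cite[Lemma 4.3]{CPSZ}), note that oddness kills the $k=0$ coefficient, and sum using the rapid decay of the Fourier coefficients to get the $O(|z|^{-2})$ and $O(|z|^{-3})$ bounds. Your extra care about the principal-value/no-point-mass interpretation at the origin is a refinement the paper glosses over, but it does not change the argument.
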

\begin{proof}
The proof is essentially  the same as~\cite[Lemma 4.3]{CPSZ} but for the convenience of the reader we give it here. We identify $\mathbb{R}^2$ with $\mathbb{C}$. Since $m$ is smooth on the circle, we may write
\[
m(e^{i \theta}) = \sum_{k \in \mathbb{Z}} \alpha_k e^{ik \theta}, \quad \theta \in [0, 2\pi),
\]
where the Fourier coefficients $\alpha_k$ decay faster than any polynomial. As $m$ is odd, it has mean zero on the circle, and thus $\alpha_0 = 0$.  It follows that
\[
m  = \sum_{0 \not = k \in \mathbb{Z}} \alpha_k g_k, \quad g_k(z) = \frac{z^k}{\vert z \vert^k}, \quad 0 \not = z \in \mathbb{C}.
\]
We have for $k \not = 0$ that $(\mathcal{F} g_k)(0) = 0$, and as in~\cite[Lemma 4.3]{CPSZ} one can show that
\[
(\mathcal{F} g_k)(z) = \frac{\vert k \vert}{2 \pi i^k} \frac{z^k}{\vert z \vert^{k+2}}, \quad 0 \not = z \in \mathbb{C}.
\]
Hence $(\mathcal{F}m)(0) = 0$ and
\[
(\mathcal{F}m)(z) =   \sum_{0 \not = k \in \mathbb{Z}} \frac{\vert k \vert \alpha_k  }{  2 \pi i^k}  \frac{z^k}{\vert z \vert^{k+2}},  \quad 0 \not = z \in \mathbb{C}.
\]
As the coefficients $\vert k \vert \alpha_k$ are summable it follows therefore that
\[
\vert (\mathcal{F}m)(z) \vert \approx O(\vert z \vert^{-2}), \quad \quad
  \vert \nabla (\mathcal{F}m)(z) \vert \approx O(\vert z \vert^{-3}),
  \]
  which finishes the proof.

\end{proof}

\begin{proposition}\label{Prop=FourierCZ}
Let $m: \mathbb{R}^2 \setminus \{ 0 \} \rightarrow \mathbb{R}$ be smooth, odd,  homogeneous, and set ${m(0,0) = 0}$. Then the Fourier multiplier~$T_m$ is a bilinear Calder\'on-Zygmund operator with kernel $-(2\pi)^{-1}\widetilde{\mathcal{F}m}$, see the definition  below~\eqref{Eqn=Smooth}. More precisely, for Schwartz functions $f_1, f_2$ we have  
\begin{equation}\label{Eqn=KernelExpansion}
   T_m(f_1, f_2)(x) = -\frac{1}{2\pi}  \int_\mathbb{R} \int_\mathbb{R}  (\mathcal{F}m)(x - y, x- z)  f_1(y) f_2(z) dy dz, \quad x \in \mathbb{R} \setminus (\supp(f_1) \cap  \supp(f_2)).
\end{equation}
\end{proposition}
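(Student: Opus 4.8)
The statement to be proven is Proposition~\ref{Prop=FourierCZ}: given $m$ smooth, odd, homogeneous with $m(0,0)=0$, the bilinear Fourier multiplier $T_m$ is a Calder\'on--Zygmund operator whose kernel is $-(2\pi)^{-1}\widetilde{\mathcal F m}$, with the kernel representation~\eqref{Eqn=KernelExpansion} valid off the common support. My plan has three parts: (i) verify the kernel representation~\eqref{Eqn=KernelExpansion}; (ii) deduce the size and smoothness conditions; (iii) verify the boundedness condition.

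**Step (i): the kernel formula.** First I would note that by Proposition~\ref{Prop=SizeSmooth}, $\mathcal Fm$ is an honest locally integrable function (away from the origin it satisfies the size bound $O(|z|^{-2})$) with $(\mathcal Fm)(0,0)=0$. The natural approach is to start from the definition of $T_m$ in Section~\ref{subsec: fm_czo}, write $(\mathcal F f_1)(\xi_1)(\mathcal F f_2)(\xi_2)$ in terms of $f_1,f_2$, and formally interchange the order of integration: the $\xi$-integral then produces (an inverse Fourier transform of) $m$ evaluated at $x-y$ and $x-z$. The clean way to make this rigorous without worrying about the local singularity of $\mathcal Fm$ at $0$ is to restrict to $x \notin \supp(f_1)\cap\supp(f_2)$, so that on the region of integration one never has $y=z=x$ simultaneously, hence $(x-y,x-z)$ stays away from the origin and $\mathcal Fm$ is smooth there; this is exactly the hypothesis in~\eqref{Eqn=KernelExpansion}. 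One should phrase this as pairing $T_m(f_1,f_2)$ against a test function $f_3$ supported away from $\supp(f_1)\cap\supp(f_2)$ and using Fubini/Parseval in the form $\langle \mathcal F^{-1}(m\cdot \mathcal F f_1 \otimes \mathcal F f_2), \cdots\rangle$; the constant $-(2\pi)^{-1}$ and the sign come from tracking the normalization in~\eqref{Eqn=FourierTransform} and the fact that $\mathcal F$ composed with itself gives a reflection. This identifies the kernel as $K = -(2\pi)^{-1}\mathcal Fm$ and $\widetilde K = -(2\pi)^{-1}\widetilde{\mathcal Fm}$.

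**Step (ii): size and smoothness.** This is essentially immediate from the work already done. By Proposition~\ref{Prop=SizeSmooth}, $\mathcal Fm$ satisfies the size condition~\eqref{Eqn=Size} and the smoothness condition~\eqref{Eqn=Smooth}; hence so does $K=-(2\pi)^{-1}\mathcal Fm$ (up to the constant, which is absorbed into $C_K$). Then~\eqref{Eqn=Size2} and~\eqref{Eqn=Smooth2} give the size and gradient bounds for $\widetilde K$, and Lemma~\ref{Lemma=KernelEasierCondition} upgrades the gradient bound to the H\"older-type smoothness condition (with $\alpha=1$) appearing in the definition of a Calder\'on--Zygmund operator in Section~\ref{subsec: fm_czo}. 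So I would simply cite Proposition~\ref{Prop=SizeSmooth} and Lemma~\ref{Lemma=KernelEasierCondition}.

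**Step (iii): boundedness.** It remains to check that $T_m$ maps $L^{p_1}\times L^{p_2}\to L^{q_3}$ for some (hence all) admissible triple. I would expand $m=\sum_{k\neq 0}\alpha_k g_k$ as in Proposition~\ref{Prop=SizeSmooth}, with $g_k(z)=z^k/|z|^k$ and rapidly decaying coefficients $\alpha_k$. Each $g_k$ is a homogeneous symbol of degree $0$ that is smooth away from $0$; the associated bilinear Fourier multiplier $T_{g_k}$ is a Coifman--Meyer type multiplier (the symbol and all its derivatives satisfy the Coifman--Meyer bounds, with constants polynomial in $k$), hence bounded $L^{p_1}\times L^{p_2}\to L^{q_3}$ with norm growing at most polynomially in $|k|$. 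Summing against the rapidly decaying $\alpha_k$ gives boundedness of $T_m$. Alternatively, since we have already verified the size and smoothness conditions and the kernel representation, one could invoke that $T_m$ extends the bilinear Hilbert-transform-free part — but the cleanest self-contained route is the Coifman--Meyer estimate on each $g_k$; I would cite the standard multilinear Calder\'on--Zygmund / Coifman--Meyer theorem from~\cite{GrafakosTorres}.

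**Main obstacle.** The only genuinely delicate point is the rigorous justification of the interchange of integration in Step (i) — ensuring that the formal Fubini computation producing~\eqref{Eqn=KernelExpansion} is legitimate despite $\mathcal Fm$ being only a tempered distribution a priori. The restriction $x\notin\supp(f_1)\cap\supp(f_2)$ together with Proposition~\ref{Prop=SizeSmooth} (which tells us $\mathcal Fm$ is a smooth function away from $0$ with a mild $O(|z|^{-2})$ singularity) is what makes it work, so I would organize the argument to exploit that separation of supports from the outset rather than trying to prove an identity valid everywhere. Everything else is bookkeeping: tracking the $(2\pi)$ normalization and the sign, and quoting the already-established Proposition~\ref{Prop=SizeSmooth} and Lemma~\ref{Lemma=KernelEasierCondition}.
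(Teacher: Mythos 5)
Your proposal is correct and follows essentially the same route as the paper: the kernel formula is obtained by exploiting the separation of supports together with the distributional Fourier pairing (the sign ultimately coming from the oddness of $\mathcal{F}m$), and the size and smoothness conditions are quoted from Proposition~\ref{Prop=SizeSmooth} (with Lemma~\ref{Lemma=KernelEasierCondition} supplying the H\"older form). The only deviation is the boundedness step, where you expand $m=\sum_k \alpha_k g_k$ and apply Coifman--Meyer termwise, whereas the paper simply cites~\cite[Theorem 8]{KenigStein}; your variant works (and the decomposition is in fact unnecessary, since a smooth homogeneous degree-zero symbol already satisfies the required Mikhlin-type bounds directly), so this is a cosmetic rather than substantive difference.
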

\begin{proof}
We have for $x \in \mathbb{R} \setminus (\supp(f_1) \cap \supp(f_2))$ that
\[
\begin{split}
   T_m(f_1, f_2)(x)   = &  \frac{1}{2\pi}    \int_{\mathbb{R}}  \int_{\mathbb{R}} m(\xi_1, \xi_2) (\mathcal{F} f_1)(\xi_1)  ( \mathcal{F} f_2 )(\xi_2) e^{i (\xi_1 + \xi_2) x}    d\xi_1 d\xi_2 \\
   = & \frac{1}{2\pi} \int_{\mathbb{R}}  \int_{\mathbb{R}} m(\xi_1, \xi_2) ((\mathcal{F} f_1)(\xi_1)  e^{i \xi_1  x} ) ( ( \mathcal{F} f_2 )(\xi_2) e^{i \xi_2  x})    d\xi_1 d\xi_2 \\
   = & \frac{1}{2\pi} \int_{\mathbb{R}}  \int_{\mathbb{R}} m(\xi_1, \xi_2) (\mathcal{F} f_1( \: \cdot \: + x))(\xi_1)     ( \mathcal{F} f_2( \: \cdot \: + x) )(\xi_2)     d\xi_1 d\xi_2 \\
   = & \frac{1}{2\pi} \int_{\mathbb{R}}  \int_{\mathbb{R}}(\mathcal{F} m)(\xi_1, \xi_2) f_1( \xi_1 + x)      f_2( \xi_2   + x)     d\xi_1 d\xi_2 \\
   = & \frac{1}{2\pi} \int_{\mathbb{R}}  \int_{\mathbb{R}}(\mathcal{F} m)(\xi_1-x, \xi_2-x) f_1( \xi_1)      f_2( \xi_2  )    d\xi_1 d\xi_2. \\
  \end{split}
\] 
As $m$ is odd so is $\mathcal{F}m$, hence we conclude~\eqref{Eqn=KernelExpansion}.

To show that $T_m$ is indeed a Calder\'on-Zygmund operator as defined in Section~\ref{subsec: fm_czo}, it remains to show conditions~\eqref{Eqn=Size}, \eqref{Eqn=Smooth}, and boundedness of $T_m$. The first two of these conditions hold by Proposition~\ref{Prop=SizeSmooth}. Finally, the boundedness condition follows from~\cite[Theorem 8]{KenigStein}.
\end{proof}

\begin{remark}\label{rem: T(1)=0}
For Calder\'on-Zygmund operators $T$ on $\mathbb{R}$ with a convolution kernel $$\widetilde{K}(x,y_1,\dotsc,y_n)=K(x-y_1,\dotsc,x-y_n), \quad x, y_1, \ldots, y_n \in \mathbb{R},$$it holds that $\langle T(1,\dotsc,1),\phi \rangle =0$ for all $\phi\in L_c^{\infty}(\mathbb{R})$ with $\int_{\mathbb{R}} \phi dx = 0$, i.e.\ $T(1,\dotsc,1)$ vanishes in $\mathrm{BMO}$. As is common in the literature, we will refer to this as ``$T(1,\dotsc,1)=0$''. We decided to omit the detailed proof of this fact as it is commonly used in the literature. We refer the reader to  the last equation in the proof of~\cite[Proposition 6]{GrafakosTorres} which applies to our situation; though we note that the proof there is only formal. Similarly, all partial adjoints $T^{\ast 1}$ and $T^{\ast 2}$ of $T$  (defined via $\langle T^{\ast 1} (f,g),h\rangle := \langle T(h,g),f\rangle$, $\langle T^{\ast 2} (f,g),h\rangle := \langle T(f,h),g\rangle$, see~\cite{diplinio_multilinear_czo}) vanish for these operators. See e.g.\ \cite{bilinrepthrm,diplinio_multilinear_czo} for well-defined constructions of these expressions. 
Hence in particular, for a bilinear Calder\'on-Zygmund operator with convolution kernel, it holds that $\langle T(1, 1),\phi \rangle=\langle T^{\ast 1}(1, 1),\phi \rangle = \langle T^{\ast 2}(1, 1),\phi \rangle  =0$ for all $\phi\in L_c^{\infty}(\mathbb{R})$ with $\int_{\mathbb{R}} \phi dx = 0$. 
\end{remark}

\begin{auxproof}
For a bounded bilinear map $T: X_1 \times X_2 \rightarrow Y$  between Banach spaces $X_1, X_2$ and $Y$ we set the partial adjoints $T^{\ast 1}$ and $T^{\ast 2}$ as the maps $T^{\ast 1}: Y^\ast \times X_2 \rightarrow X_1^\ast$  and   $T^{\ast 2}: X_1 \times Y^\ast \rightarrow X_2^\ast$  determined by $T^{\ast 1}(f, x_2)(x_1) = f(T(x_1, x_2))$ and $T^{\ast 2}(x_1, f)(x_2) = f(T(x_1, x_2))$. That is, $T^{\ast 1}( \: \cdot \: , x_2)$  is the adjoint of  $T( \: \cdot \:  , x_2)$ for every $x_2 \in X_2$.

\begin{definition}\label{def: t11}[reference: bilinear rep thrm sect 2.2]
Let $T$ be a bilinear Calder\'on-Zygmund operator with kernel $K$. Fix $\varepsilon>0$. Let $Q$ be a cube in $\mathbb{R}$ and let $\phi\in L^{\infty}(\mathbb{R})$ with $\mathrm{supp}\;\phi\subset Q$ and $\int_{\mathbb{R}}\phi(x)dx=0$. Let $C=C_{\varepsilon}\ge 3$ be sufficiently large such that $|x-y|>\varepsilon$ for all $x\in Q$, $y\notin CQ$. Choose $c_Q\in\mathbb{R}$ to be the midpoint of the one-dimensional cube $Q$. Finally, let $Q_{\varepsilon}(x):=\{(y,z)\in\mathbb{R}\mid \max(|x-y|,|x-z|)\le\varepsilon\}$. We may then define
\begin{multline}\label{eqn=t(1,1)_def}
    \langle T_{\varepsilon,C}(1,1),\phi\rangle := \int_{\mathbb{R}} \int_{\mathbb{R}^2\setminus Q_{\varepsilon}(x)} K(x,y,z)1_{CQ}(y)1_{CQ}(z)\phi(x)dydzdx \\ +  \int_{\mathbb{R}} \int_{\mathbb{R}^2\setminus Q_{\varepsilon}(x)} (K(x,y,z)-K(c_Q,y,z))1_{(CQ\times CQ)^c}(y,z)\phi(x)dydzdx. 
\end{multline}

We say that $T(1,1)=0$ if $\langle T_{\varepsilon,C}(1,1),\phi\rangle=0$ for all $\varepsilon>0$ and all $Q$, $\phi$, $C$, $c_Q$ as above. 
\end{definition}
\begin{proposition}\label{Prop=VanishingParaproducts}
In Proposition~\ref{Prop=FourierCZ} we have $T_m(1,1) = T_m^{\ast 1}(1,1) = T_m^{\ast 2}(1,1) =  0$.
\end{proposition}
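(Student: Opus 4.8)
The plan is to reduce the vanishing of $T_m(1,1)$, $T_m^{\ast 1}(1,1)$, $T_m^{\ast 2}(1,1)$ to the general fact stated in Remark~\ref{rem: T(1)=0}, namely that a bilinear Calder\'on--Zygmund operator whose kernel $\widetilde K(x,y,z)$ is of convolution type, i.e.\ $\widetilde K(x,y,z)=K(x-y,x-z)$, satisfies $T(1,1)=0$ (and similarly for its partial adjoints) in the $\mathrm{BMO}$ sense. First I would recall from Proposition~\ref{Prop=FourierCZ} that $T_m$ is a bilinear Calder\'on--Zygmund operator whose kernel is precisely $-(2\pi)^{-1}\widetilde{\mathcal F m}$, where $\widetilde{\mathcal F m}(x,y,z) = (\mathcal F m)(x-y, x-z)$. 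This is visibly of convolution form with $K = -(2\pi)^{-1}\mathcal F m$, so Remark~\ref{rem: T(1)=0} applies verbatim and yields $\langle T_m(1,1),\phi\rangle = \langle T_m^{\ast 1}(1,1),\phi\rangle = \langle T_m^{\ast 2}(1,1),\phi\rangle = 0$ for every $\phi \in L^\infty_c(\mathbb{R})$ with $\int_{\mathbb{R}}\phi\,dx = 0$.

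If one wants a self-contained argument rather than a citation to Remark~\ref{rem: T(1)=0}, I would verify the convolution-kernel vanishing directly using the off-diagonal integral representation: for $\phi$ supported in a cube $Q$ with mean zero, and for a dilate $CQ$ large enough that the diagonal is avoided, one writes $\langle T_{\varepsilon,C}(1,1),\phi\rangle$ as in Definition~\ref{def: t11}, splitting $\mathbb{R}^2 = (CQ\times CQ) \cup (CQ\times CQ)^c$. On the local part $CQ\times CQ$ one uses that $\int \phi = 0$ to subtract $K(c_Q, y, z)$, and on the global part the subtraction is already built into the formula; in both regions one then exploits that $K$ depends only on $x-y$ and $x-z$, so that translating the $x$-variable and using the mean-zero property of $\phi$, together with the size bound $|K(x-y,x-z)| \lesssim (|x-y|+|x-z|)^{-2}$ from~\eqref{Eqn=Size} and the smoothness bound~\eqref{Eqn=Smooth} for the difference $K(x,\cdot,\cdot)-K(c_Q,\cdot,\cdot)$, makes the inner integrals combine to zero after a Fubini rearrangement. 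The partial adjoints $T_m^{\ast 1}$ and $T_m^{\ast 2}$ have kernels obtained from $\widetilde K$ by permuting the three variables, and these permuted kernels are again of convolution type (e.g.\ $K^{\ast 1}(u,v) = \widetilde K$-type expression in $x_1-x_0,\, x_1-x_2$), so the same argument applies to them without change.

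I expect the main obstacle to be purely one of \emph{bookkeeping with distributional pairings}: the symbol $m$ is merely bounded (odd homogeneous), so $T_m(1,1)$ and its adjoints only make sense modulo constants, i.e.\ as elements of $\mathrm{BMO}$, and one must be careful that the off-diagonal kernel representation~\eqref{Eqn=KernelExpansion} is only valid for $x \notin \supp(f_1)\cap\supp(f_2)$ — so testing against a mean-zero $\phi$ and truncating via $CQ$ is essential to make every integral absolutely convergent before invoking Fubini. The analytic estimates themselves are routine given Proposition~\ref{Prop=SizeSmooth} (which supplies~\eqref{Eqn=Size} and~\eqref{Eqn=Smooth}), so the real content is just checking that the convolution structure of $\mathcal F m$ — together with the oddness of $m$, hence of $\mathcal F m$ — is preserved under forming partial adjoints, and then quoting Remark~\ref{rem: T(1)=0}. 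Since the paper has explicitly chosen to phrase this as a citable fact, the cleanest writeup is the short one: invoke Proposition~\ref{Prop=FourierCZ} to identify the convolution kernel, then invoke Remark~\ref{rem: T(1)=0}.
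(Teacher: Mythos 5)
Your first, ``clean'' route --- identify the convolution kernel via Proposition~\ref{Prop=FourierCZ} and then quote Remark~\ref{rem: T(1)=0} --- is circular in the context of this statement: that remark records precisely the fact to be proven and explicitly omits its proof (pointing only to a formal computation in the Grafakos--Torres reference), while the proposition you are asked to prove is the rigorous substitute for that omitted argument. So the citation cannot carry the burden; all the content must come from your self-contained sketch.

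Your sketch does follow the actual strategy (truncated pairing $\langle T_{\varepsilon,C}(1,1),\phi\rangle$ over a dilate $CQ$, convolution structure, mean-zero $\phi$, size and smoothness bounds), but it has a genuine gap in the global region. The local term (both $y,z\in CQ$) does vanish identically: after the change of variables $\tilde y=x-y$, $\tilde z=x-z$ the truncated kernel integral becomes independent of $x$ for $x\in Q$ once $C$ is large enough that the indicators are identically $1$, and Fubini together with $\int\phi=0$ kills it (no subtraction of $K(c_Q,\cdot,\cdot)$ is involved there). But the global term, $\int\!\!\int \bigl(K(x-y,x-z)-K(c_Q-y,c_Q-z)\bigr)1_{(CQ\times CQ)^c}(y,z)\phi(x)\,dy\,dz\,dx$, does not ``combine to zero after a Fubini rearrangement'': the region $(CQ\times CQ)^c$ is not translation-compatible with $x$, so no change of variables renders the integrand $x$-independent. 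The argument in the paper instead proceeds in two steps that are absent from your outline: first, a separate Fubini/mean-zero computation showing that the truncated pairing is independent of the dilation parameter $C$ (as long as it is sufficiently large); second, the smoothness bound~\eqref{Eqn=Smooth2} gives the quantitative estimate that the global term is $\lesssim \Vert\phi\Vert_1/R_C$ with $R_C\to\infty$ as $C\to\infty$, whence by the $C$-independence it is exactly zero. Without these two steps your argument only shows the global contribution is small, not that it vanishes. Your final observation --- that the kernels of the partial adjoints are again of convolution type (permuting the variables of $K(x-y,x-z)$ yields kernels of the same form satisfying the same size and smoothness bounds), so the identical argument applies to $T_m^{\ast 1}$ and $T_m^{\ast 2}$ --- is correct.
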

\begin{proof}
First note that the definition above is independent of the choice of $C$, as long as it is sufficiently large. Indeed, let $C$ be such a constant and let $\Tilde{C}>C$. Then by subtracting the corresponding terms, we have\begin{multline*}
    \int_{\mathbb{R}} \int_{\mathbb{R}^2\setminus Q_{\varepsilon}(x)} K(x,y,z)(1_{\Tilde{C}Q}(y)1_{\Tilde{C}Q}(z)-1_{CQ}(y)1_{CQ}(z))\phi(x)dydzdx\\ = \int_{\mathbb{R}} \int_{\mathbb{R}^2\setminus Q_{\varepsilon}(x)} K(x,y,z)1_{(\Tilde{C}Q\times\Tilde{C}Q)\setminus (CQ\times CQ)}(y,z)\phi(x)dydzdx 
\end{multline*}
as well as \begin{multline*}
    \int_{\mathbb{R}} \int_{\mathbb{R}^2\setminus Q_{\varepsilon}(x)} (K(x,y,z)-K(c_Q,y,z))(1_{(\Tilde{C}Q\times \Tilde{C}Q)^c}(y,z)-1_{(CQ\times CQ)^c}(y,z))\phi(x)dydzdx \\=
    - \int_{\mathbb{R}} \int_{\mathbb{R}^2\setminus Q_{\varepsilon}(x)} (K(x,y,z)-K(c_Q,y,z))1_{((\Tilde{C}Q\times\Tilde{C}Q)\setminus (CQ\times CQ))^c}(y,z)\phi(x)dydzdx \\
    = - \int_{\mathbb{R}} \int_{\mathbb{R}^2\setminus Q_{\varepsilon}(x)} K(x,y,z)1_{((\Tilde{C}Q\times\Tilde{C}Q)\setminus (CQ\times CQ))^c}(y,z)\phi(x)dydzdx,
\end{multline*}
where in the last equality we used Fubini's theorem and $\int_{\mathbb{R}}\phi(x)dx=0$.
Hence we conclude $\langle T_{\varepsilon,\Tilde{C}}(1,1),\phi\rangle-\langle T_{\varepsilon,C}(1,1),\phi\rangle=0$. From now we will therefore suppress the subscript $C$ from the notation.

We will now show that both integral terms in Definition~\ref{def: t11} vanish if $K$ is of convolution type and satisfies~\eqref{Eqn=Size2} and~\eqref{Eqn=Smooth2}. Here, we follow~\cite{bilinrepthrm} in further approximating $T_{\varepsilon}$ by considering $T_{\varepsilon_1,\varepsilon_2}:=T_{\varepsilon_1}-T_{\varepsilon_2}$ for $\varepsilon_2>\varepsilon_1$ and letting $\varepsilon_2\to\infty$. 

For the first integral term in~\eqref{def: t11}, choose $C$ large enough such that $Q_{\varepsilon_2}(x)\subset CQ$ for all $x\in Q$. In this case, using $K(x,y,z)=k(x-y,x-z)$ for some $k$, a coordinate transformation yields

\begin{align*}
    &\int_{\mathbb{R}} \int_{Q_{\varepsilon_2}(x)\setminus Q_{\varepsilon_1}(x)} K(x,y,z)1_{CQ}(y)1_{CQ}(z)\phi(x)dydzdx \\
    &=\int_{\mathbb{R}} \int_{Q_{\varepsilon_2}(x)\setminus Q_{\varepsilon_1}(x)} k(x-y,x-z)1_{CQ}(y)1_{CQ}(z)\phi(x)dydzdx  \\
    &= \int_{\mathbb{R}} \int_{Q_{\varepsilon_2}(0)\setminus Q_{\varepsilon_1}(0)} k(\Tilde{y},\Tilde{z})1_{CQ}(x-\Tilde{y})1_{CQ}(x-\Tilde{z})\phi(x)d\Tilde{y}d\Tilde{z}dx.  
\end{align*}

By our choice of $C$, the indicator functions $1_{CQ}(x-\cdot)$ are equal to $1$ for all $x\in Q$, hence 

\begin{equation*}
    \int_{\mathbb{R}} \int_{Q_{\varepsilon_2}(0)\setminus Q_{\varepsilon_1}(0)} k(\Tilde{y},\Tilde{z})1_{CQ}(x-\Tilde{y})1_{CQ}(x-\Tilde{z})\phi(x)d\Tilde{y}d\Tilde{z}dx = \int_{\mathbb{R}} \int_{Q_{\varepsilon_2}(0)\setminus Q_{\varepsilon_1}(0)} k(\Tilde{y},\Tilde{z})\phi(x)d\Tilde{y}d\Tilde{z}dx. 
\end{equation*}

By the size condition~\eqref{Eqn=Size2}, we may use Fubini's theorem and obtain 
\begin{equation*}
    \int_{\mathbb{R}} \int_{Q_{\varepsilon_2}(0)\setminus Q_{\varepsilon_1}(0)} k(\Tilde{y},\Tilde{z})\phi(x)d\Tilde{y}d\Tilde{z}dx =  \int_{Q_{\varepsilon_2}(0)\setminus Q_{\varepsilon_1}(0)} k(\Tilde{y},\Tilde{z})d\Tilde{y}d\Tilde{z}\int_{\mathbb{R}}\phi(x)dx.
\end{equation*}
The integral on the right vanishes by the definition of $\phi$. The integral on the left is bounded for all $\varepsilon_2>\varepsilon_1$ since \begin{equation*}
    \int_{Q_{\varepsilon_2}(0)\setminus Q_{\varepsilon_1}(0)} k(\Tilde{y},\Tilde{z})d\Tilde{y}d\Tilde{z}  \le C_1 \int_{Q_{\varepsilon_2}(0)\setminus Q_{\varepsilon_1}(0)} \frac{1}{(|\Tilde{y}|+|\Tilde{z}|)^2}d\Tilde{y}d\Tilde{z} \le C_1\frac{\varepsilon_2^2-\varepsilon_1^2}{\varepsilon_1^2} < \infty.
\end{equation*}
Hence $$\int_{\mathbb{R}} \int_{Q_{\varepsilon_2}(x)\setminus Q_{\varepsilon_1}(x)} K(x,y,z)1_{CQ}(y)1_{CQ}(z)\phi(x)dydzdx=0$$ for all $\varepsilon_2>\varepsilon_1$, allowing us to conclude $$\int_{\mathbb{R}} \int_{\mathbb{R}^2\setminus Q_{\varepsilon}(x)} K(x,y,z)1_{CQ}(y)1_{CQ}(z)\phi(x)dydzdx=0$$ for all $\varepsilon>0$.

To estimate the second integral in~\eqref{eqn=t(1,1)_def}, we use the smoothness condition~\eqref{Eqn=Smooth2}. Note that by construction, if $(y,z)\in (CQ\times CQ)^c$, then $$|x-c_Q|\le \frac{|Q|}{2} \le \frac{1}{2}\max(|x-y|,|x-z|).$$ We obtain \begin{align*}
    \quad&\left|\int_{\mathbb{R}} \int_{\mathbb{R}^2\setminus Q_{\varepsilon}(x)} (K(x,y,z)-K(c_Q,y,z))1_{(CQ\times CQ)^c}(y,z)\phi(x)dydzdx\right| \\
    &\le \int_{\mathbb{R}} \int_{\mathbb{R}^2\setminus Q_{\varepsilon}(x)} \frac{|x-c_Q|}{(|x-y|+|x-z|)^3}1_{(CQ\times CQ)^c}(y,z)|\phi(x)|dydzdx \\
    &\le \frac{|Q|}{2} \int_{\mathbb{R}} \int_{\mathbb{R}^2\setminus Q_{\varepsilon}(x)} \frac{1}{(|x-y|+|x-z|)^3}1_{(CQ\times CQ)^c}(y,z)|\phi(x)|dydzdx.
\end{align*}
By a coordinate transformation, we now have \begin{multline*}
    \int_{\mathbb{R}} \int_{\mathbb{R}^2\setminus Q_{\varepsilon}(x)} \frac{1}{(|x-y|+|x-z|)^3}1_{(CQ\times CQ)^c}(y,z)|\phi(x)|dydzdx \\
    = \int_{\mathbb{R}} \int_{\mathbb{R}^2\setminus Q_{\varepsilon}(0)} \frac{1}{(|y|+|z|)^3}1_{(x,x)-(CQ\times CQ)^c}(y,z)|\phi(x)|dydzdx.
\end{multline*}
Note that by construction, we have $Q_{\varepsilon}(x)\subset (CQ\times CQ)$ for all $x\in Q$, hence we can always find $R_C>0$ such that\begin{multline*}
    \int_{\mathbb{R}} \int_{\mathbb{R}^2\setminus Q_{\varepsilon}(0)} \frac{1}{(|y|+|z|)^3}1_{(x,x)-(CQ\times CQ)^c}(y,z)|\phi(x)|dydzdx \\ 
    \le \int_{\mathbb{R}} \int_{\mathbb{R}^2\setminus B_{R_C}(0)} \frac{1}{(|y|+|z|)^3}|\phi(x)|dydzdx.
\end{multline*}
In particular, we may choose $R_C$ such that $R_C\to\infty$ as $C\to \infty$. We may now calculate the integrals by transforming to polar coordinates and obtain \begin{align*}
     &\int_{\mathbb{R}} \int_{\mathbb{R}^2\setminus B_{R_C}(0)} \frac{1}{(|y|+|z|)^3}|\phi(x)|dydzdx \\
     &= \|\phi\|_1\int_{\mathbb{R}^2\setminus B_{R_C}(0)} \frac{1}{(|y|+|z|)^3}|dydz \\
     &\le \|\phi\|_1\int_{\mathbb{R}^2\setminus B_{R_C}(0)} \frac{1}{\sqrt(y^2+z^2)^3}|dydz \\
     & = 2\pi \|\phi\|_1\int_{R_C}^{\infty}\frac{1}{r^2}dr \\
     &= \frac{2\pi \|\phi\|_1}{R_C}.
\end{align*}
As~\eqref{eqn=t(1,1)_def} is independent of our choice of $C$, we conclude \begin{equation*}
    \int_{\mathbb{R}} \int_{\mathbb{R}^2\setminus Q_{\varepsilon}(x)} (K(x,y,z)-K(c_Q,y,z))1_{(CQ\times CQ)^c}(y,z)\phi(x)dydzdx = 0.
\end{equation*}
\end{proof}
\end{auxproof}

\subsection{Completely bounded estimates and constants for bilinear multipliers}

The following is a special case of the main theorem of~\cite{DiPlinioMathAnn}, specialised to our setting of Proposition~\ref{Prop=FourierCZ} and Schatten classes. Unfortunately~\cite{DiPlinioMathAnn} does not keep track of the constants, though they can be made explicit by following the proof. We have outlined the proof of~\eqref{multiplier_on_R bound} in  Appendix~\ref{Sect=AppendixConstants}. Note that Remark~\ref{rem: T(1)=0} implies the vanishing of the paraproduct terms in~\cite{DiPlinioMathAnn}, which allows for a significantly better bound of~\eqref{multiplier_on_R bound} compared to general Calder\'on-Zygmund operators, see Remark~\ref{Rmk=ParaVanish}.

\begin{theorem}[{Special case of~\cite[Theorem 1.1]{DiPlinioMathAnn}}]\label{thrm: diplinio_statement_bilinear}
    Let $T$ be a bilinear Calderon-Zygmund operator on~$\mathbb{R}$. Then the bilinear operator $$T_{cb}(\sum_{j=1}^Jf_j\otimes y_j,\sum_{k=1}^Kg_k\otimes z_k):= \sum_{j,k}T(f_j,g_k)\otimes y_jz_k$$ with $f_j,g_k\in L^{\infty}_c(\mathbb{R})$, $y_j\in S_{p_1}$, $z_k\in S_{p_2}$, extends to a bounded operator  
    \[
        T_{cb}:L^{p_1}(\mathbb{R},S_{p_1})\times L^{p_2}(\mathbb{R},S_{p_2})\to L^{p}(\mathbb{R},S_{p})
    \]
    for $p_1,p_2,p\in(1,\infty)$ such that $1/p_1+1/p_2=1/p$. Moreover, if for every $\phi\in L_c^{\infty}(\mathbb{R})$  with $\int_{\mathbb{R}} \phi dx = 0$, we have
    \begin{equation}\label{Eqn=T1}
   \langle  T(1,1), \phi \rangle  = \langle T^{\ast 1}(1,1), \phi \rangle =  \langle T^{\ast 2}(1,1), \phi \rangle  =  0,
    \end{equation}
     then
   \begin{equation}\label{multiplier_on_R bound}
        \Vert T_{cb}:L^{p_1}(\mathbb{R},S_{p_1})\times L^{p_2}(\mathbb{R},S_{p_2})\to L^{p}(\mathbb{R},S_{p}) \Vert \lesssim   C(p, p_1, p_2),
    \end{equation}
    where
    \begin{equation}\label{Eqn=CFunction}
          C({p,p_1,p_2})= \beta_{p}\beta_{p_1}\beta_{p_2}   
    +\min(\beta_{p_1}^2\beta_{p},\beta_{p}^2\beta_{p_1})
 + \min(\beta_{p_2}^2\beta_{p},\beta_{p}^2\beta_{p_2}) +\min(\beta_{p_2}^2\beta_{p_1},\beta_{p_1}^2\beta_{p_2}),
     \end{equation}
    and $\beta_q = q q^\ast$, $1 < q < \infty$.
\end{theorem}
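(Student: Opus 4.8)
The plan is to derive both assertions from the \emph{bilinear dyadic representation theorem} together with operator-valued square-function, maximal-function and martingale-transform inequalities on the Schatten classes, keeping explicit track of every $p$-dependency; the full bookkeeping is the content of Appendix~\ref{Sect=AppendixConstants}, so here I only indicate the shape of the argument.

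\textbf{Step 1 (reduction to dyadic model operators).} By the bilinear dyadic representation theorem there is an exponent $\delta>0$ (proportional to the kernel regularity $\alpha$ of the Calderón--Zygmund operator) such that, tested against finite tensors, $T$ equals a finite average over shifted dyadic lattices $\mathcal{D}$ of operators of the form
\[
\sum_{\mathbf{i}\in\mathbb{N}_0^{3}} 2^{-\delta\max(\mathbf{i})}\,\mathbb{S}^{\mathbf{i}}_{\mathcal{D}}\;+\;\Pi_{\mathcal{D}},
\]
where each $\mathbb{S}^{\mathbf{i}}_{\mathcal{D}}$ is either a cancellative bilinear dyadic shift of complexity $\mathbf{i}=(i_1,i_2,i_3)$ or a partial paraproduct (cancellative in two of the three slots), normalised so that its scalar $L^{2}\times L^{2}\to L^{1}$ norm is $\lesssim 1$, and $\Pi_{\mathcal{D}}$ collects the three classical paraproducts whose symbols are $T(1,1)$, $T^{\ast 1}(1,1)$ and $T^{\ast 2}(1,1)$. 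Since every $S_q$ with $q\in(1,\infty)$ is a UMD space, all these model operators tensor to bounded operators on the $S_q$-valued Lebesgue spaces; this already gives the first, unconditional assertion.

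\textbf{Step 2 (using the cancellation and summing).} Under hypothesis~\eqref{Eqn=T1} the symbols $T(1,1)$, $T^{\ast 1}(1,1)$, $T^{\ast 2}(1,1)$ vanish in $\mathrm{BMO}$, hence $\Pi_{\mathcal{D}}=0$, and only the cancellative shifts and partial paraproducts remain. (In the application to Proposition~\ref{Prop=FourierCZ} this hypothesis is exactly Remark~\ref{rem: T(1)=0}.) Expanding a model operator over the relevant Haar functions and dualising the output against $L^{p^{\ast}}(\mathbb{R},S_{p^{\ast}})$ turns the estimate into a trilinear Haar form: for a cancellative shift one applies an $S_q$-valued Haar square-function (UMD) inequality in each of the three slots, each costing a factor comparable to $\beta_{q}=qq^{\ast}$ for the relevant exponent (recall $\beta_{p^{\ast}}=\beta_{p}$), which produces the term $\beta_{p}\beta_{p_1}\beta_{p_2}$ of~\eqref{Eqn=CFunction}; for a partial paraproduct, of which there is essentially one for each unordered pair $\{a,b\}\subset\{p,p_1,p_2\}$, one may expand the wavelet in either of the two active slots, which yields the two competing bounds $\beta_{a}^{2}\beta_{b}$ and $\beta_{b}^{2}\beta_{a}$ (the square reflecting that on an averaged-type slot one pays for both a square-function and a maximal/Carleson estimate), whence the minimum. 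Each bound carries an extra polynomial factor $(1+\max(\mathbf{i}))^{A}$ with $A$ an \emph{absolute} constant, dominated by the weight $2^{-\delta\max(\mathbf{i})}$; since $\sum_{\mathbf{i}\in\mathbb{N}_0^{3}}2^{-\delta\max(\mathbf{i})}(1+\max(\mathbf{i}))^{A}<\infty$ and averaging over $\mathcal{D}$ preserves bounds, \eqref{multiplier_on_R bound} follows with $C(p,p_1,p_2)$ as in~\eqref{Eqn=CFunction}.

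\textbf{Main obstacle.} The delicate part is the quantitative estimate for a single model operator on the $S_q$-valued spaces: one must fix the order in which Hölder's inequality, the $S_q$-valued Haar square-function (UMD) inequality, Stein's inequality and the Doob/Fefferman--Stein maximal inequality are applied so that the total constant is exactly the sum-and-minimum expression~\eqref{Eqn=CFunction} rather than a cruder product, \emph{and} so that the complexity exponent $A$ remains independent of $p,p_1,p_2$ — otherwise the decay $2^{-\delta\max(\mathbf{i})}$ could not absorb it and the sum over $\mathbf{i}$ would reintroduce $p$-dependence. Checking in addition that none of the constants hidden inside the scalar representation theorem secretly depend on $p$ is precisely what forces us to reproduce the argument in detail in Appendix~\ref{Sect=AppendixConstants}.
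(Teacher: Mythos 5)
Your overall strategy coincides with the paper's: reduce $T$ through the bilinear dyadic representation theorem, note that hypothesis~\eqref{Eqn=T1} makes the paraproducts vanish (this is exactly the content of Remark~\ref{rem: T(1)=0} combined with the formula $a_Q=C_T\langle T(1,1),h_Q\rangle$), and then bound the surviving dyadic model operators on the $S_q$-valued $L^q$-spaces with UMD tools, keeping track of constants and absorbing the complexity dependence (which indeed only costs a factor linear in $\max_i k_i$) into the weight $2^{-\max_i k_i/2}$. That is the architecture of Appendix~\ref{Sect=AppendixConstants}.

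However, your account of the decisive quantitative step --- where the three minima in~\eqref{Eqn=CFunction} come from --- does not match the actual argument and, as written, has a gap. In the representation used here the model operators surviving after~\eqref{Eqn=T1} are dyadic shifts whose coefficients satisfy the size condition~\eqref{eqn=shiftscalars}; after rewriting a shift into the sub-shifts $S^k_i$, the configurations with one non-cancellative Haar function are handled by decoupling (applied twice) together with Stein's inequality, giving $\beta_p^2\beta_{p_j}$, and the competing bound --- hence the minimum --- is obtained by a cyclic permutation of the trilinear form $\Lambda_{S^k_i}$, i.e.\ by dualising in a different slot and using $\beta_{p^\ast,S_{p^\ast}}=\beta_{p,S_p}$. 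You instead attribute these terms to partial paraproducts estimated via a square function plus a maximal/Carleson inequality, and you assert without justification that ``expanding the wavelet in either of the two active slots'' produces both competing bounds $\beta_a^2\beta_b$ and $\beta_b^2\beta_a$. That route is not obviously compatible with the stated constant: a genuine Carleson-coefficient paraproduct estimate brings in the John--Nirenberg constant and leads to the strictly worse bounds recorded in Remark~\ref{Rmk=ParaVanish} (the $C''$-terms with a cube of a UMD constant and $C_{\mathrm{BMO}}$), so the minima cannot simply be read off from a choice of expansion slot there. The missing ingredients are thus precisely the decoupling/Kahane--Khintchine/Stein estimates with size-condition coefficients and the permutation--duality trick that the appendix carries out; without them your sketch yields at best a cruder constant than~\eqref{Eqn=CFunction}.
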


\begin{remark}\label{Rmk=ParaVanish}
Without the condition~\eqref{Eqn=T1} the paraproducts in the representation theorem  described in Section~\ref{Sect=AppendixDefs} do not vanish. Theorem~\ref{thrm: diplinio_statement_bilinear} remains true but with a worse constant $C'(p,p_1, p_2)$ given by 
\begin{align*}
    C'(p,p_1, p_2) &=  C(p,p_1, p_2) + \min(C''(p,p_1),C''(p,p_2)) \\
    & \quad +
    \min(C''(p_1,p_2),C''(p_1,p))+ 
    \min(C''(p_2,p_1),C''(p_2,p)) , \\
     C''(p,q) &= \beta_p^3\beta_q^2 C_{\mathrm{BMO}_{q}},
\end{align*}  
where $C_{\mathrm{BMO}_{p}}=2e(ep\Gamma(p))^{1/p}$ refers to the constant in the John-Nirenberg inequality, see e.g.\ \cite{GrafakosOldNew}. For $p\to\infty$, we have $C_{\mathrm{BMO}_{p}}=O(p)$.
 The constant $C'$ is derived through a combination of the permutation argument that we present at the end of Appendix~\ref{Sect=AppendixConstants}, and explicit calculations found in~\cite{JesseThesis}. The facts we present in this remark shall not be used in this paper.
\end{remark}

Next, we translate this statement to Fourier multipliers. This allows us to use transference to estimate bilinear Schur multipliers such as the ones in Proposition~\ref{Prop=SchurDecomposition} by their corresponding Fourier multipliers.

\begin{theorem}\label{Thm=TmEstimate}
Let $m: \mathbb{R}^2 \setminus \{ 0 \} \rightarrow \mathbb{R}$ be smooth, odd, homogeneous, and set $m(0,0) = 0$.  Then for $1<p,p_1, p_2 < \infty$ with $\frac{1}{p} = \frac{1}{p_1} + \frac{1}{p_2}$ we have
\[
\Vert T_m: L^{p_1}(\mathbb{R}^2, S_{p_1}) \times   L^{p_2}(\mathbb{R}^2, S_{p_2})  \rightarrow L^{p}(\mathbb{R}^2, S_{p})   \Vert \lesssim C(p, p_1, p_2),
\]
where $C(p, p_1, p_2)$ is as in~\eqref{Eqn=CFunction}.
\end{theorem}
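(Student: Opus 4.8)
The plan is to deduce Theorem~\ref{Thm=TmEstimate} from Theorem~\ref{thrm: diplinio_statement_bilinear} by verifying that the Fourier multiplier $T_m$ with symbol $m$ fits into the framework of bilinear Calder\'on-Zygmund operators satisfying the cancellation condition~\eqref{Eqn=T1}. First I would invoke Proposition~\ref{Prop=FourierCZ}: under the standing hypotheses on $m$ (smooth, odd, homogeneous, $m(0,0)=0$), $T_m$ is a bilinear Calder\'on-Zygmund operator on $\mathbb{R}$ whose kernel is the convolution-type kernel $\widetilde K(x,y,z) = -(2\pi)^{-1}(\mathcal{F}m)(x-y,x-z)$, and the size and smoothness conditions needed in~\cite{DiPlinioMathAnn} hold by Proposition~\ref{Prop=SizeSmooth} together with Lemma~\ref{Lemma=KernelEasierCondition} (which upgrades the gradient bound~\eqref{Eqn=Smooth2} to the full H\"older-type smoothness condition of Section~\ref{subsec: fm_czo}).

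Next I would check the cancellation hypothesis~\eqref{Eqn=T1}. Since the kernel of $T_m$ is of convolution type, Remark~\ref{rem: T(1)=0} applies directly and gives $\langle T_m(1,1),\phi\rangle = \langle T_m^{\ast 1}(1,1),\phi\rangle = \langle T_m^{\ast 2}(1,1),\phi\rangle = 0$ for every $\phi \in L_c^\infty(\mathbb{R})$ with $\int_\mathbb{R}\phi\,dx = 0$; that is, all three ``$T(1,1)$-type'' quantities vanish in $\mathrm{BMO}$. (The detailed justification of this is the content of the \texttt{auxproof} block; for the purposes of this proof I simply cite Remark~\ref{rem: T(1)=0}.) With both the Calder\'on-Zygmund structure and the full cancellation verified, Theorem~\ref{thrm: diplinio_statement_bilinear} applies verbatim and yields that $(T_m)_{cb}$ extends boundedly $L^{p_1}(\mathbb{R},S_{p_1}) \times L^{p_2}(\mathbb{R},S_{p_2}) \to L^{p}(\mathbb{R},S_{p})$ with norm $\lesssim C(p,p_1,p_2)$, the constant from~\eqref{Eqn=CFunction}.

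The one genuine point requiring care is that Theorem~\ref{Thm=TmEstimate} is stated for functions on $\mathbb{R}^2$ (i.e.\ $L^{p_i}(\mathbb{R}^2, S_{p_i})$), whereas Theorem~\ref{thrm: diplinio_statement_bilinear} is stated on $\mathbb{R}$, and moreover $T_m$ as just discussed is the operator acting on functions of one variable while the symbol $m$ lives on $\mathbb{R}^2 = \mathbb{R}_{\xi_1} \times \mathbb{R}_{\xi_2}$. I would resolve this by noting that the ``$\mathbb{R}^2$'' in the statement refers to the product of the two input copies of $\mathbb{R}$: the bilinear Fourier multiplier $T_m$ with symbol $m \in L^\infty(\mathbb{R}\times\mathbb{R})$ takes a pair of functions on $\mathbb{R}$ and, after tensoring as in $(T_m)_{cb}$, a pair of elements of $L^{p_i}(\mathbb{R}, S_{p_i})$; the notation $L^{p_i}(\mathbb{R}^2, S_{p_i})$ in the theorem should be read accordingly (or, if one insists on a literal product-domain reading, one applies the one-dimensional result coordinatewise/via a standard tensorization of Calder\'on-Zygmund theory to each factor). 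The substantive content is entirely carried by the one-dimensional Theorem~\ref{thrm: diplinio_statement_bilinear}, so I expect no real obstacle here beyond bookkeeping; the only mild subtlety is making sure the operator-valued (completely bounded) formulation $(T_m)_{cb}$ of Theorem~\ref{thrm: diplinio_statement_bilinear} is the one being quoted, since that is exactly what is needed to later transfer to Schur multipliers on Schatten classes.

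In summary, the proof is short: apply Proposition~\ref{Prop=FourierCZ} to identify $T_m$ as a bilinear Calder\'on-Zygmund operator of convolution type; apply Remark~\ref{rem: T(1)=0} to get the vanishing of $T_m(1,1)$, $T_m^{\ast1}(1,1)$, $T_m^{\ast2}(1,1)$ in $\mathrm{BMO}$; and conclude by Theorem~\ref{thrm: diplinio_statement_bilinear} that the completely bounded extension has norm $\lesssim C(p,p_1,p_2)$. I anticipate the ``hard part'' — to the extent there is one — to be purely notational: reconciling the domains $\mathbb{R}$ versus $\mathbb{R}^2$ and making the completely bounded vector-valued statement match precisely what Theorem~\ref{thrm: diplinio_statement_bilinear} provides.
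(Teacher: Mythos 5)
Your proposal is correct and follows essentially the same route as the paper: cite Proposition~\ref{Prop=FourierCZ} to identify $T_m$ as a bilinear Calder\'on-Zygmund operator with convolution-type kernel, invoke Remark~\ref{rem: T(1)=0} to verify the cancellation condition~\eqref{Eqn=T1}, and conclude via Theorem~\ref{thrm: diplinio_statement_bilinear}. Your observation that the ``$\mathbb{R}^2$'' in the statement should be read as the one-variable setting of Theorem~\ref{thrm: diplinio_statement_bilinear} (as the paper's own proof and the subsequent Theorem~\ref{Thm=Transference} implicitly do) is an accurate piece of bookkeeping, not a gap.
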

\begin{proof}
By Proposition~\ref{Prop=FourierCZ}, $T_m$ is a bilinear Calder\'on-Zygmund operator. 
 By Remark~\ref{rem: T(1)=0}, we see that~\eqref{Eqn=T1} holds. Therefore, the statement follows directly from Theorem~\ref{thrm: diplinio_statement_bilinear}.
\end{proof}

\begin{theorem}\label{Thm=Transference}
Let $m: \mathbb{R}^2 \setminus \{ 0 \} \rightarrow \mathbb{R}$ be smooth, odd, homogeneous, and set $m(0,0) = 0$. Set
\[
\widetilde{m}(\lambda_0, \lambda_1, \lambda_2) = m(\lambda_1 - \lambda_0, \lambda_2 - \lambda_1), \quad  (\lambda_0, \lambda_1, \lambda_2) \in \mathbb{R}^3.
\]
 Then
\[
\Vert  M_{\widetilde{m}}: S_{p_1} \times S_{p_2} \rightarrow S_p \Vert \leq \Vert T_m: L^{p_1}(\mathbb{R}, S_{p_1}) \times   L^{p_2}(\mathbb{R}, S_{p_2})  \rightarrow L^{p}(\mathbb{R}, S_{p})   \Vert \lesssim  C(p, p_1, p_2),
\]
with $C(p, p_1, p_2)$ as given in~\eqref{Eqn=CFunction}.
\end{theorem}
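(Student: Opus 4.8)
The statement combines a transference inequality (first inequality) with the Calder\'on--Zygmund estimate already established (second inequality, which is precisely Theorem~\ref{Thm=TmEstimate}). Since the second inequality is immediate from Theorem~\ref{Thm=TmEstimate}, the real content is the transference bound
\[
\Vert M_{\widetilde{m}}: S_{p_1} \times S_{p_2} \rightarrow S_p \Vert \leq \Vert T_m: L^{p_1}(\mathbb{R}, S_{p_1}) \times L^{p_2}(\mathbb{R}, S_{p_2}) \rightarrow L^{p}(\mathbb{R}, S_{p}) \Vert.
\]
The plan is to invoke the bilinear transference principle for Toeplitz-form Schur multipliers; this is exactly the content of the recent extensions of the Bo\.zejko--Fendler method to the multilinear setting \cite{CKV,CJKM}. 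The point is that $\widetilde{m}$ has \emph{Toeplitz form}, i.e.\ $\widetilde{m}(\lambda_0,\lambda_1,\lambda_2) = m(\lambda_1-\lambda_0,\lambda_2-\lambda_1)$ depends only on the consecutive differences, and this is precisely the structural hypothesis under which a Schur multiplier on $S_p$ is dominated by the corresponding Fourier (convolution) multiplier acting on the vector-valued $L^p$ space.

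First I would recall the mechanism of bilinear transference concretely. Given a trigonometric-polynomial-type element, one embeds $S_p$-valued functions on $\mathbb{R}$ into $S_p$ via a family of modulation/translation operators: for $t\in\mathbb{R}$ let $\pi_t$ be multiplication by $e^{it\lambda}$ on $L^2(\mathbb{R})$ (or, in the approximating discrete picture, on $\ell^2$), and observe that conjugation by $\pi_t$ implements a shift in the Fourier/difference variable. The key algebraic identity is that for $x,y$ with nice kernels,
\[
\pi_{\xi_1}^{\ast}\, M_{\widetilde m}(\pi_{\xi_1} x \pi_{\xi_2}^{\ast},\ \pi_{\xi_2} y \pi_{\xi_3}^{\ast})\, \pi_{\xi_3}
\]
reproduces, after integrating against the symbol, the action of the bilinear Fourier multiplier $T_m$ on the corresponding $S_p$-valued functions $\xi\mapsto \pi_\xi x \pi_\xi^{\ast}$ etc. Averaging over a large box and using Fubini together with the translation-invariance of the operator norm (i.e.\ $\|\pi_t\| = 1$ and $\pi_t$ preserves each $S_p$-norm) yields the claimed domination. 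I would cite \cite{CKV,CJKM} for the precise formulation rather than reproving it, since the excerpt explicitly allows assuming earlier results and flags this transference step as one of the paper's four main ingredients; the argument there is standard once one checks $m$ (hence $\widetilde m$) is bounded and measurable, which holds since $m$ is smooth and homogeneous of degree $0$, so $m\in L^\infty(\mathbb{R}^2)$.

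With the transference inequality in hand, the second inequality is just Theorem~\ref{Thm=TmEstimate}: the hypotheses there ($m$ smooth, odd, homogeneous, $m(0,0)=0$) are identical to those assumed here, so $\|T_m\| \lesssim C(p,p_1,p_2)$ with $C$ as in \eqref{Eqn=CFunction}. Chaining the two estimates finishes the proof. The main obstacle, or rather the only non-bookkeeping point, is verifying that the bilinear transference theorem of \cite{CKV,CJKM} applies verbatim to our $\widetilde m$: one must confirm that $\widetilde m$ is genuinely of Toeplitz form in their sense (it is, by construction, $\widetilde m(\lambda_0,\lambda_1,\lambda_2)=m(\lambda_1-\lambda_0,\lambda_2-\lambda_1)$) and that the target $S_p$ and intermediate spaces $S_{p_1},S_{p_2}$ satisfy the integrability range $1<p_1,p_2,p<\infty$, $1/p=1/p_1+1/p_2$ required there — which is exactly our hypothesis. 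No regularity of $m$ beyond boundedness is needed for the transference step itself; the Calder\'on--Zygmund structure (oddness, homogeneity, smoothness) is only used inside Theorem~\ref{Thm=TmEstimate} to control $\|T_m\|$.
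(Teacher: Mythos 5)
There is a genuine gap, and it is exactly the point where you wrote that ``no regularity of $m$ beyond boundedness is needed for the transference step itself.'' The bilinear transference theorem you want to quote, \cite[Theorem A]{CKV}, is formulated for symbols that are continuous, and your $m$ is \emph{not}: a nonzero odd homogeneous function on $\mathbb{R}^2\setminus\{0\}$ is necessarily discontinuous at the origin, so $\widetilde m$ is discontinuous along the diagonal $\Delta$, and the cited theorem does not apply verbatim to $M_{\widetilde m}$. This is not mere bookkeeping: the paper's proof is organized precisely around this obstruction. It first mollifies, setting
\[
m_f(\lambda_1,\lambda_2)=\int_{\mathbb{R}}\int_{\mathbb{R}} f(\mu_1)f(\mu_2)\, m(\lambda_1-\mu_1,\lambda_2-\mu_2)\,d\mu_1\,d\mu_2 ,
\]
which \emph{is} continuous, applies \cite[Theorem A]{CKV} to $\widetilde m_f$, and then uses translation invariance of the Fourier multiplier norm (\cite[Lemma 4.3]{CJKM}) to get $\Vert T_{m_f}\Vert \le \Vert T_m\Vert$, so that $\Vert M_{\widetilde m_f}\Vert \le \Vert T_m\Vert \lesssim C(p,p_1,p_2)$ by Theorem~\ref{Thm=TmEstimate}.

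The second missing ingredient is how to pass from $m_f$ back to $m$. The paper takes $f_j\in C_c(\mathbb{R})$, $f_j\ge 0$, $\Vert f_j\Vert_1=1$, with supports shrinking to $\{0\}$, and shows for rank-one operators with continuous compactly supported kernels that the Schatten duality pairings converge,
\[
\langle M_{\widetilde m_{f_j}}(x_1,x_2),x_3\rangle_{p,p^\ast}\;\longrightarrow\;\langle M_{\widetilde m}(x_1,x_2),x_3\rangle_{p,p^\ast},
\]
using an $L^1$-convergence of the translated kernel products; by linearity, density, and the uniform $S_2\times S_2\to S_2$ bounds of Section~\ref{Sect=SchurPrelim}, this extends to all admissible $x_1,x_2,x_3$, whence $\Vert M_{\widetilde m}\Vert \le \limsup_j \Vert M_{\widetilde m_{f_j}}\Vert \le \Vert T_m\Vert$. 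Your proposal skips both the mollification and this limiting/duality argument, so as written it rests on an application of \cite{CKV} outside its hypotheses. Your heuristic description of the transference mechanism (conjugation by modulations and averaging) is the right picture of what happens inside \cite{CKV}, and your use of Theorem~\ref{Thm=TmEstimate} for the second inequality is fine; what needs to be added is the regularization-and-limit scheme above (or an independent proof of transference for merely bounded measurable Toeplitz symbols, which is not available in the cited references).
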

\begin{proof}
We will apply~\cite[Theorem A]{CKV} to a modification of~$m$ that is continuous at zero. Define $m(\lambda_1, \lambda_2; \mu_1, \mu_2) = m(\lambda_1 - \mu_1, \lambda_2 - \mu_2)$, $\lambda_i, \mu_i \in \mathbb{R}$. 
Let $f \in C_b(\mathbb{R})$  with compact support be such that $f \geq 0$ and $\Vert f \Vert_1 = 1$. We set
\[
m_f(\lambda_1, \lambda_2) = \int_{\mathbb{R}} \int_{\mathbb{R}}  f(\mu_1) f(\mu_2)  m(\lambda_1, \lambda_2; \mu_1, \mu_2)  d \mu_1 d\mu_2,
\]
which is continuous.
Set again $\widetilde{m}_f(\lambda_0, \lambda_1, \lambda_2) = m_f(\lambda_1 - \lambda_0, \lambda_2 - \lambda_1)$.
It now follows from~\cite[Theorem A]{CKV} that    
\[
\Vert  M_{\widetilde{m}_f}: S_{p_1} \times S_{p_2} \rightarrow S_p \Vert \leq   \Vert T_{m_f}: L^{p_1}(\mathbb{R}, S_{p_1}) \times   L^{p_2}(\mathbb{R}, S_{p_2})  \rightarrow L^{p}(\mathbb{R}, S_{p})   \Vert. 
\]
Next, observe that~\cite[Lemma 4.3]{CJKM} shows that $T_m$ and $T_{m(\: \cdot \:, \: \cdot \: ; \mu_1, \mu_2 )}$ have the same norm as bilinear maps. Therefore, it follows that  
\[
\begin{split}
& \Vert T_{m_f}: L^{p_1}(\mathbb{R}; S_{p_1}) \times   L^{p_2}(\mathbb{R}; S_{p_2})  \rightarrow L^{p}(\mathbb{R}; S_{p})   \Vert \\
\leq & 
 \int_{\mathbb{R}} \int_{\mathbb{R}}  f(\mu_1) f(\mu_2) 
\Vert T_{m}: L^{p_1}(\mathbb{R}; S_{p_1}) \times   L^{p_2}(\mathbb{R}; S_{p_2})  \rightarrow L^{p}(\mathbb{R}; S_{p})   \Vert d\mu_1 d\mu_2\\
= & \Vert T_{m}: L^{p_1}(\mathbb{R}; S_{p_1}) \times   L^{p_2}(\mathbb{R}; S_{p_2})  \rightarrow L^{p}(\mathbb{R}; S_{p})   \Vert.
\end{split}
\]
 Combining the previous two estimates with  Theorem~\ref{Thm=TmEstimate} yields that 
\begin{equation}\label{Eqn=ImportantEstimate}
\begin{split}
\Vert  M_{\widetilde{m}_f}: S_{p_1} \times S_{p_2} \rightarrow S_p \Vert 
\lesssim   C(p, p_1, p_2).
\end{split}
\end{equation} 
Now replace $f$ by  functions $f_j  \in C_c(\mathbb{R})$ satisfying $f_j \geq 0$,  $\Vert f_j \Vert_1 = 1$, $\supp(f_j)\subset \supp(f_{j-1})$, and~$\bigcap_j \supp(f_j)=\{0\}$.  
Take $x_1 \in S_{p_1} \cap S_2, x_2 \in S_{p_2}\cap S_2$, and $x_3 \in S_{p^\ast} \cap S_2$. Assume that each of these operators is rank one with respective kernels  $A_i(s,t) = \xi_i(s) \eta_i(t)$ (see Section~\ref{Sect=SchurPrelim}), where we assume $ \xi_i, \eta_i \in C_c(\mathbb{R})$, $ i=1,2,3$.  Then as $j \rightarrow \infty$ we get for the Schatten class duality pairing
\begin{equation}\label{Eqn=ConvergeKernel0} 
\begin{split}
  & \langle M_{ \widetilde{m}_{f_j} }(x_1, x_2), x_3 \rangle_{p, p^\ast} \\
= &
\int_{\mathbb{R}^3}  \widetilde{m}_{f_j}(s_0, s_1,  s_2) \xi_1(s_0) \xi_2(s_1) \xi_3(s_2) \eta_1(s_1) \eta_2(s_2) \eta_3(s_0) ds_0 ds_1 ds_2  \\
= &
\int_{\mathbb{R}^3} \int_{\mathbb{R}^2}  f_j(\mu_1) f_j(\mu_2) m(s_1 - s_0 - \mu_1, s_2 - s_1 - \mu_2) \\ 
& \qquad  \qquad \times\xi_1(s_0) \xi_2(s_1) \xi_3(s_2) \eta_1(s_1) \eta_2(s_2) \eta_3(s_0) d\mu_1 d\mu_2 ds_0 ds_1 ds_2  \\
= &
\int_{\mathbb{R}^3} \int_{\mathbb{R}^2}  f_j(\mu_1) f_j(\mu_2) m(s_1 - s_0 , s_2 - s_1 ) \\
& \qquad \qquad\times  \xi_1(s_0 - \mu_1) \xi_2(s_1) \xi_3(s_2 +\mu_2) \eta_1(s_1) \eta_2(s_2 + \mu_2) \eta_3(s_0 - \mu_1) d\mu_1 d\mu_2 ds_0 ds_1 ds_2. 
\end{split}
\end{equation}
  As each $\xi_j$ and $\eta_j$ is continuous and compactly supported, we have 
\begin{equation}\label{Eqn=InsertExtra}
\begin{split}
& \int_{\mathbb{R}^2}  f_j(\mu_1) f_j(\mu_2) \xi_1(s_0 - \mu_1)  \xi_2(s_1) \xi_3(s_2 +\mu_2) \eta_1(s_1) \eta_2(s_2 + \mu_2) \eta_3(s_0 - \mu_1) d\mu_1 d\mu_2 \\
&  \qquad \xrightarrow{j\to\infty}  \;\xi_1(s_0 ) \xi_2(s_1) \xi_3(s_2) \eta_1(s_1) \eta_2(s_2) \eta_3(s_0 ),
\end{split}
\end{equation}
in the~$L^1(\mathbb{R}^3)$-norm, where we see the expressions in~\eqref{Eqn=InsertExtra} as functions of  $(s_0, s_1, s_2) \in \mathbb{R}^3$.
Therefore, taking the limit  $j \rightarrow \infty$ in  \eqref{Eqn=ConvergeKernel0} gives
\begin{equation}\label{Eqn=ConvergeKernel}
\begin{split}
& \langle M_{ \widetilde{m}_{f_j} }(x_1, x_2), x_3 \rangle_{p, p^\ast} \\
&\qquad\xrightarrow{j\to\infty} 
\int_{\mathbb{R}^3}     m(s_1 - s_0 , s_2 - s_1 ) \xi_1(s_0 ) \xi_2(s_1) \xi_3(s_2) \eta_1(s_1) \eta_2(s_2) \eta_3(s_0)  ds_0 ds_1 ds_2  \\
 & \qquad= \langle M_{ \widetilde{m} }(x_1, x_2), x_3 \rangle_{p, p^\ast} \\
\end{split}
 \end{equation}
By linearity, density, and uniform boundedness  of $M_{\widetilde{m}}$ and $M_{\widetilde{m}_{f_j}}$  as maps $S_2 \times S_2 \rightarrow S_2$ (see Section~\ref{Sect=SchurPrelim}) the convergence \eqref{Eqn=ConvergeKernel} holds for any $x_1 \in S_2 \cap S_{p_1}, x_2 \in S_2 \cap S_{p_2}, x_3 \in S_2 \cap S_{p^\ast}$. 
Hence, 
\[
\begin{split}
\Vert  M_{\widetilde{m}}: S_{p_1} \times S_{p_2} \rightarrow S_p \Vert = &
\sup_{\substack{x_1 \in S_2 \cap S_{p_1}, x_2 \in S_2 \cap S_{p_2}, x_3 \in S_2 \cap S_{p^\ast}, \\ \Vert x_1 \Vert_{p_1} = \Vert x_2 \Vert_{p_2} = \Vert x_3 \Vert_{p^\ast} = 1}} \vert \langle M_{ \widetilde{m}  }(x_1, x_2), x_3 \rangle_{p, p^\ast}  \vert \\
= &  \sup_{\substack{x_1 \in S_2 \cap S_{p_1}, x_2 \in S_2 \cap S_{p_2}, x_3 \in S_2 \cap S_{p^\ast}, \\ \Vert x_1 \Vert_{p_1} = \Vert x_2 \Vert_{p_2} = \Vert x_3 \Vert_{p^\ast} = 1}} \lim_{j \rightarrow \infty} \vert \langle M_{ \widetilde{m}_{f_j} }(x_1, x_2), x_3 \rangle_{p, p^\ast}  \vert \\
\leq  & \limsup_{j \rightarrow \infty} 
\Vert  M_{\widetilde{m}_{f_j}}: S_{p_1} \times S_{p_2} \rightarrow S_p \Vert,
\end{split}
\]
which concludes the proof.
 \end{proof}

\section{Proof of Theorem A and extrapolation}\label{sect: proof_thrm_A}

\subsection{Main result}  We now collect all estimates we have obtained so far in this paper.

\begin{theorem}[Theorem A]\label{Thm=TheoremA}
For every $f \in C^2(\mathbb{R})$ and for every $1<p, p_1, p_2 < \infty$ such that $\frac{1}{p} = \frac{1}{p_1} + \frac{1}{p_2}$ we have that
\[
\Vert M_{f^{[2]}}: S_{p_1} \times S_{p_2} \rightarrow S_{p} \Vert \lesssim D(p, p_1, p_2) \Vert f'' \Vert_\infty,
\]
where
\[
\begin{split}
    D(p, p_1, p_2)  =  &  C(p, p_1, p_2) ( \beta_{p_1} + \beta_{p_2} ) + \beta_{p_1} \beta_{p_2}   (\beta_p  + \beta_{p_1}  + \beta_{p_2} )  \\
  \end{split}
\]
where $C(p, p_1,p_2)$ was defined in~\eqref{Eqn=CFunction} and $\beta_q = q q^\ast$. 
\end{theorem}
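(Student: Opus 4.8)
The plan is to feed the six-term decomposition of $M_{f^{[2]}}$ from Proposition~\ref{Prop=SchurDecomposition} into the estimates assembled in Sections~\ref{sect: linear_bddness_hms}--\ref{Sect=Bilinear}, and to use that the Schatten classes form an operator ideal so that the norm of each composition is controlled by the product of the norms of its factors. Concretely, fix $x\in S_2\cap S_{p_1}$, $y\in S_2\cap S_{p_2}$, bound each of the six summands in~\eqref{Eqn=MainDecompositionStart} separately, and then sum. For a term of the shape $M_b(M_\ell(x),y)$ with $b$ a bilinear symbol and $\ell$ a linear symbol one has
\[
\Vert M_b(M_\ell(x),y)\Vert_p \le \Vert M_b: S_{p_1}\times S_{p_2}\to S_p\Vert\,\Vert M_\ell: S_{p_1}\to S_{p_1}\Vert\,\Vert x\Vert_{p_1}\Vert y\Vert_{p_2},
\]
and symmetrically when the linear factor sits on the second leg; a term of the shape $M_\ell(M_b(x,y))$ is bounded by $\Vert M_\ell: S_p\to S_p\Vert\,\Vert M_b: S_{p_1}\times S_{p_2}\to S_p\Vert\,\Vert x\Vert_{p_1}\Vert y\Vert_{p_2}$.

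For the linear factors there are, up to permutation of variables, only the two symbols $\phi_f$ and $\mathring{\phi}_f$, both of the form $f^{[2]}(\lambda^{(k)},\mu^{(3-k)})$ with $1\le k\le 2$. Hence Theorem~\ref{thrm: divdiff_two_variables_bdd_multiplier} with $n=2$, together with Remark~\ref{Rmk=LinearCase} to absorb the extra $\sign$ factor $\epsilon$, gives $\Vert M_{\epsilon\phi_f}: S_q\to S_q\Vert,\ \Vert M_{\epsilon\mathring{\phi}_f}: S_q\to S_q\Vert \lesssim \beta_q\,\Vert f''\Vert_\infty$ for every $q\in(1,\infty)$, and this is applied with $q\in\{p,p_1,p_2\}$ as dictated by the six terms. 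For the bilinear factors I split into two groups. The symbols $a_3=\epsilon_3\widetilde{\theta}_2\psi_2$, $a_4=\epsilon_1\widetilde{\theta}_2(1-\psi_2)$, $a_5=\epsilon_2\widetilde{\theta}_3\psi_3$, $a_6=\epsilon_3\widetilde{\theta}_3(1-\psi_3)$ occurring in the last four summands are handled by Corollary~\ref{Cor=S1Estimate}, which yields $\Vert M_{a_j}: S_{p_1}\times S_{p_2}\to S_p\Vert\lesssim \beta_{p_1}\beta_{p_2}$. The two remaining symbols $\epsilon_1\widetilde{\theta}_1\psi_1$ and $\epsilon_2\widetilde{\theta}_1(1-\psi_1)$ are, by the construction in Section~\ref{sect: decomposition fn}, smooth, homogeneous and odd (an even homogeneous factor $\widetilde{\theta}_1\psi_1$, resp. $\widetilde{\theta}_1(1-\psi_1)$, times an odd $\epsilon_i$) and vanish at the origin, so Theorem~\ref{Thm=Transference} applies and gives $\Vert M_{\,\cdot\,}: S_{p_1}\times S_{p_2}\to S_p\Vert\lesssim C(p,p_1,p_2)$ with $C(p,p_1,p_2)$ as in~\eqref{Eqn=CFunction}.

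Combining term by term: the first two summands contribute $C(p,p_1,p_2)\beta_{p_1}\Vert f''\Vert_\infty$ and $C(p,p_1,p_2)\beta_{p_2}\Vert f''\Vert_\infty$; the third and sixth each contribute $\beta_p\beta_{p_1}\beta_{p_2}\Vert f''\Vert_\infty$; the fourth contributes $\beta_{p_1}^2\beta_{p_2}\Vert f''\Vert_\infty$ and the fifth $\beta_{p_1}\beta_{p_2}^2\Vert f''\Vert_\infty$. Summing, and absorbing the factor $2$ in front of $\beta_p\beta_{p_1}\beta_{p_2}$ into the $\lesssim$, gives
\[
\Vert M_{f^{[2]}}: S_{p_1}\times S_{p_2}\to S_p\Vert \lesssim \left(C(p,p_1,p_2)(\beta_{p_1}+\beta_{p_2}) + \beta_{p_1}\beta_{p_2}(\beta_p+\beta_{p_1}+\beta_{p_2})\right)\Vert f''\Vert_\infty = D(p,p_1,p_2)\Vert f''\Vert_\infty .
\]
Since the identity~\eqref{Eqn=MainDecompositionStart} holds on $S_2\times S_2$, it holds in particular on $(S_2\cap S_{p_1})\times(S_2\cap S_{p_2})$, and the displayed bound extends to all of $S_{p_1}\times S_{p_2}$ by density of $S_2\cap S_{p_i}$ in $S_{p_i}$.

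The heavy lifting having been done in the preceding sections, the real content here is bookkeeping: matching each of the six summands to the correct linear exponent $q\in\{p,p_1,p_2\}$, and checking that the two Toeplitz symbols feeding the first two summands genuinely satisfy the smoothness, oddness and homogeneity hypotheses of Theorem~\ref{Thm=Transference} — this is the one point that needs care, since it requires that multiplying by $\epsilon_i$ converts the even homogeneous symbol into an odd one while the supports arranged in Section~\ref{sect: decomposition fn} keep the product smooth away from the diagonal. Conceptually, the point worth stressing is that the split of the six bilinear symbols into the two groups is precisely what keeps the bound sharp: Corollary~\ref{Cor=S1Estimate} supplies the better factor $\beta_{p_1}\beta_{p_2}$ (rather than $C(p,p_1,p_2)$) for four of the six terms, and this is exactly what makes $D(p,2p,2p)$ of order $O(p^\ast)$ as $p\searrow 1$.
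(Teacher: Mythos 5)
Your proposal is correct and follows essentially the same route as the paper's own proof: it feeds the decomposition of Proposition~\ref{Prop=SchurDecomposition} into Theorem~\ref{thrm: divdiff_two_variables_bdd_multiplier}/Remark~\ref{Rmk=LinearCase} for the linear factors, Theorem~\ref{Thm=Transference} for the first two bilinear symbols, and Corollary~\ref{Cor=S1Estimate} for the last four, then combines via the ideal property of Schatten norms exactly as the paper does. Your term-by-term bookkeeping (contributions $C\beta_{p_1}$, $C\beta_{p_2}$, $\beta_p\beta_{p_1}\beta_{p_2}$ twice, $\beta_{p_1}^2\beta_{p_2}$, $\beta_{p_1}\beta_{p_2}^2$) and the closing density remark simply make explicit what the paper states more tersely.
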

\begin{proof}
Consider the decomposition of $M_{f^{[2]}}$ given in~\eqref{Eqn=MainDecompositionStart} in terms of bilinear Schur multipliers of Toeplitz form and linear Schur multipliers. It is sufficient to show that each of these maps are bounded on the corresponding Schatten classes. Each of the functions $a_1 := \epsilon_1 \widetilde{\theta}_1 \psi_1$, $a_2:= \epsilon_2 \widetilde{\theta}_1 (1 - \psi_1)$, $a_3 := \epsilon_3 \widetilde{\theta}_2 \psi_2$, $a_4 := \epsilon_1 \widetilde{\theta}_2 (1 - \psi_2)$, $a_5 := \epsilon_2 \widetilde{\theta}_2 \psi_2$ and $a_6 := \epsilon_3 \widetilde{\theta}_3(1- \psi_3)$ is smooth,  odd, homogeneous, and has value zero at zero. Note that we added the $\epsilon_i$ terms to assure that the functions are odd. Therefore by Theorem~\ref{Thm=Transference} we get the bounds
\[
\begin{split}
\Vert   M_{a_i}  : S_{p_1} \times S_{p_2} \rightarrow S_{p} \Vert \lesssim  C(p, p_1, p_2), \quad 1 \leq i \leq 6.
\end{split}
\]
We shall only use this fact for $i=1,2$. 
By Corollary~\ref{Cor=S1Estimate} we also get 
\[
\begin{split}
\Vert   M_{a_i}  : S_{p_1} \times S_{p_2} \rightarrow S_{p} \Vert \lesssim  p_1p_1^\ast p_2 p_2^\ast = \beta_{p_1} \beta_{p_2}, \quad 3 \leq i \leq 6.
\end{split}
\]
For the linear term $M_{\epsilon \phi_f}$
, we apply Remark~\ref{Rmk=LinearCase} to see that for any $1 < q < \infty$,
\[
\begin{split}
\Vert   M_{\epsilon \phi_f}: S_{q}  \rightarrow S_{q} \Vert \lesssim \Vert f'' \Vert_{\infty} q   q^\ast = \beta_q,
\end{split}
\]
and similarly for $\epsilon \mathring{\phi}_f$. These estimates together with the decomposition~\eqref{Eqn=MainDecompositionStart}  allow us to conclude
\[
\begin{split}
   \Vert M_{f^{[2]}}: S_{p_1} \times S_{p_2} \rightarrow S_{p} \Vert    \lesssim   C(p, p_1, p_2) ( \beta_{p_1} + \beta_{p_2} ) + \beta_{p_1} \beta_{p_2}   (\beta_p  + \beta_{p_1}  + \beta_{p_2} ).
\end{split}
\]
\end{proof}
 
\begin{remark}\label{rem: constant_asymptotics}
We examine the constant $D(p, 2p, 2p)$  with   $1 < p < \infty$ and its asymptotics for $p$ going either to $\infty$ or 1. Note that if $p \searrow 1$ then  $(2p)^\ast \nearrow 2$. In fact,  $(2p)^\ast$ is   uniformly bounded for $1 < p < \infty$. We therefore find for  $1 < p < \infty$ that
\[
D(p, 2p, 2p)  \approx   p^4 p^\ast.
\]
\end{remark}

\begin{remark}\label{Remark=OldOrder}
The $p$-dependence of the norm of the  triple operator integral appearing in~\cite[Remark 5.4]{PSS-Inventiones} is not made explicit in~\cite{PSS-Inventiones}. Following the proof of~\cite{PSS-Inventiones} in the bilinear case we find that $D(p, 2p, 2p) = O( p^{12})$ as $p\to\infty$. This is justified as follows.
\begin{enumerate}
\item The three triangular truncations used on~\cite[p. 533]{PSS-Inventiones} yield a factor of order~$O(p^3)$.
\item Estimating the linear terms in decomposition~\cite[Eqn. (4.3)]{PSS-Inventiones} yields a factor of order~$O(p^3)$, arising from the application of~\cite[Lemma 4.5]{PSS-Inventiones}, which is of order $O(p^3)$.
\item Estimating the bilinear terms in decomposition~\cite[Eqn. (4.5)]{PSS-Inventiones} yields a factor of order~$O(p^6)$. As shown on~\cite[p. 519]{PSS-Inventiones}, these estimates require two applications of~\cite[Lemma 4.5]{PSS-Inventiones}, which is of order~$O(p^3)$, to estimate the operator $R_s$ of~\cite{PSS-Inventiones}.
\end{enumerate}
A detailed account of these facts is contained in~\cite{JesseThesis}. Our proof thus gives a significant improvement of estimate for $D(p, 2p, 2p)$ from $O( p^{12})$ to $O(p^4)$ in case $p \rightarrow \infty$. In Section~\ref{Sect=LowerBound} we show that the order of  $D(p, 2p, 2p)$ is at least $O(p^2)$ for $p \rightarrow \infty$.
\end{remark}

 \subsection{Extrapolation} Let  $x \in B(H)$ be a compact operator. We set the decreasing rearrangement of $t \in [0, \infty)$ as
 \[
 \mu_t(x) = \inf \{ \Vert x p \Vert \mid p \in B(H) \textrm{ projection  with } {\rm Tr}(p) \leq t  \}.
 \]
 We define $M_{1, \infty}$ as the Marcinkiewicz space of all compact operators $x$ such that
 \[
 \Vert x  \Vert_{M_{1, \infty}} :=  \sup_{t \in [0, \infty)}  \log(1+t)^{-1} \int_0^t \mu_s(x) ds < \infty.
 \]
Theorem~\ref{Thm=TheoremA} now yields the following extrapolation result, which should be compared to~\cite[Corollary 5.6]{CMPS}. 

\begin{theorem}
For every $f \in C^2(\mathbb{R})$  we have
\[
\Vert M_{f^{[2]}}: S_{2} \times S_{2} \rightarrow M_{1, \infty}  \Vert < \infty.
\]
\end{theorem}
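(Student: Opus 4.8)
The plan is to run the standard extrapolation argument that upgrades the family of bounds from Theorem~\ref{Thm=TheoremA} to an endpoint estimate, in the spirit of~\cite[Corollary 5.6]{CMPS}. Fix $x, y \in S_2$ with $\|x\|_2 = \|y\|_2 = 1$ and put $z := M_{f^{[2]}}(x,y)$; the goal is to show $\|z\|_{M_{1,\infty}} = \sup_{t>0}\log(1+t)^{-1}\int_0^t\mu_s(z)\,ds \lesssim \|f''\|_\infty$. The key input is that Theorem~\ref{Thm=TheoremA} applied with $p_1=p_2=2p$ gives $\|M_{f^{[2]}}\colon S_{2p}\times S_{2p}\to S_p\|\lesssim D(p,2p,2p)\|f''\|_\infty$, and, by Remark~\ref{rem: constant_asymptotics}, $D(p,2p,2p)\approx p^4p^\ast$ for all $1<p<\infty$, so in particular $D(p,2p,2p)\lesssim p^\ast$ uniformly for $p\in(1,2]$. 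Crucially the domain exponents $2p$ stay $\ge 2$, so that $\|x\|_{2p}\le\|x\|_2$ and $\|y\|_{2p}\le\|y\|_2$, while the codomain $S_p$ degenerates to $S_1$ precisely as $p\searrow1$, with the constant blowing up only of order $p^\ast$.

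First I would dispose of small $t$, say $0<t\le e-1$, trivially: since $M_{f^{[2]}}\colon S_2\times S_2\to S_2$ has norm at most $\|f^{[2]}\|_\infty\le\tfrac12\|f''\|_\infty$ by~\eqref{Eqn=DiffDiff} and Section~\ref{Sect=SchurPrelim}, we get $\mu_s(z)\le\|z\|_{S_\infty}\le\|z\|_{S_2}\le\tfrac12\|f''\|_\infty$ for every $s$, whence $\log(1+t)^{-1}\int_0^t\mu_s(z)\,ds\le\tfrac{t}{2\log(1+t)}\|f''\|_\infty\lesssim\|f''\|_\infty$ because $s\mapsto s/\log(1+s)$ is bounded on $(0,e-1]$.

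For $t>e-1$ I would choose $p=p(t):=1+\tfrac{1}{\log(1+t)}\in(1,2]$, so that $p^\ast=1+\log(1+t)$ and hence $t^{1/p^\ast}=\exp\!\big(\tfrac{\log t}{1+\log(1+t)}\big)<e$. H\"older's inequality for the decreasing rearrangement gives $\int_0^t\mu_s(z)\,ds\le\big(\int_0^t\mu_s(z)^p\,ds\big)^{1/p}t^{1/p^\ast}\le\|z\|_p\,t^{1/p^\ast}$, and bounding $\|z\|_p\lesssim D(p,2p,2p)\|f''\|_\infty\|x\|_{2p}\|y\|_{2p}\lesssim p^\ast\|f''\|_\infty$ via Theorem~\ref{Thm=TheoremA} yields $\log(1+t)^{-1}\int_0^t\mu_s(z)\,ds\lesssim\tfrac{e\,p^\ast}{\log(1+t)}\|f''\|_\infty\lesssim\|f''\|_\infty$, since $p^\ast\le2\log(1+t)$ for $t>e-1$. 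Taking the supremum over $t>0$ and combining the two regimes finishes the proof.

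There is no deep obstacle here. The only points that require attention are: verifying that $D(p,2p,2p)$ is genuinely controlled by a constant multiple of $p^\ast$ on all of $(1,2]$ (not merely asymptotically), which is the content of Remark~\ref{rem: constant_asymptotics}; noting that $\|x\|_{2p}\le\|x\|_2$ uses $2p\ge2$; and the elementary bookkeeping between the small-$t$ and large-$t$ regimes and the choice of $p(t)$. Alternatively, one could invoke a general real-interpolation extrapolation lemma to reach the same conclusion, but the explicit computation above is self-contained and transparent.
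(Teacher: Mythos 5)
Your proof is correct and follows essentially the same route as the paper: H\"older's inequality for the decreasing rearrangement with a $t$-dependent exponent tending to $1$ (the paper takes $q=p^\ast$ with $p=\log s$, you take $p(t)=1+1/\log(1+t)$, which is the same choice up to normalization), the contractive embedding $S_2\hookrightarrow S_{2q}$, and Theorem~\ref{Thm=TheoremA} together with $D(q,2q,2q)\lesssim q^\ast$ near $q=1$. The only difference is your explicit (and harmless) treatment of the small-$t$ regime, which the paper leaves implicit.
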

\begin{proof}
Let $s > 0$ be large, set $p = \log(s)$ and set $q = p^*= p (p-1)^{-1}$ to be the H\"older conjugate of $p$. Note that as $s \rightarrow \infty$ we thus have $q \searrow 1$.  Let $x,y \in  S_{2}$   and set $T =  M_{f^{[2]}}(x,y)$.
 Then by H\"older's inequality, Theorem~\ref{Thm=TheoremA}, and the fact that the embedding $S_{2  }\hookrightarrow  S_{2q}$ is contractive, we have
 
\[
 \begin{split}
   \int_0^s \mu_t(T)  dt \leq s^{\frac{1}{p}}  \left(   \int_0^s \mu_t(T)^q dt \right)^{\frac{1}{q}} \leq s^{\frac{1}{p}} \Vert T \Vert_q \lesssim &
    s^{\frac{1}{p}} D(q, 2q, 2q)  \Vert x \Vert_{2q}  \Vert y \Vert_{2q}\\
     \leq &
s^{\frac{1}{p}} D(q, 2q, 2q)  \Vert x \Vert_{2}  \Vert y \Vert_{2}.
\end{split}
\]
We have 
\[
s^{\frac{1}{p}} D(q, 2q, 2q) \leq 100 s^{\frac{1}{p}} q^\ast = 100 e^{\frac{1}{p} \log(s)} p = 
100 e^1 \log(s). 
\]
 So  we see that
\[
\int_0^s \mu_t(T)  dt \lesssim \log(s)  \Vert x \Vert_{2}  \Vert y \Vert_{2}.
\]
This proves the extrapolation result.
\end{proof}

\begin{remark}
The question what the best recipient space for triple operator integrals of second order divided difference functions is remains open. In particular we do not know whether for  $f \in C^2(\mathbb{R})$ we have
\[
\Vert M_{f^{[2]}}: S_{2} \times S_{2} \rightarrow S_{1, \infty} \Vert < \infty,
\]
where $S_{1, \infty}$ is the weak $S_1$-space. Only in case $f(s) = s \vert s \vert$, as well as some simple modifications of this function,  this question is answered in the affirmative~\cite{CSZ-Israel}.  In Section~\ref{Sect=LowerBound} we prove lower bounds for Schur multipliers associated with the latter function.
\end{remark}

\section{Lower bounds and proof of Theorem~\ref{Thm=TheoremB_intro}}\label{Sect=LowerBound} 
In this section we investigate the lower bounds of Schur multipliers of second order divided difference functions. In~\cite{CLPST} it was already shown that for general $f \in C^2(\mathbb{R})$ we do not necessarily have that $M_{f^{[2]}}$ maps $S_2 \times S_2$ to $S_1$. The counterexample of~\cite{CLPST} is given by the function~$f(s) = s \vert s\vert, s \in \mathbb{R}$ (or in fact a perturbation of this function around zero that makes the function $C^2$).   Here we improve on this result by providing explicit lower bounds for the corresponding problem on Schatten classes. Our proof gives in fact better asymptotics for $p \rightarrow \infty$ than~\cite{CLPST}, as we explain in   Remark~\ref{Rmk=Comparison}.

\begin{theorem}[Theorem B, Part 1]\label{Thm=LowerBound1}
Let  $f(s) = s \vert s\vert, s \in \mathbb{R}$. Then for every  $1 < p < \infty$ we have
\[
\Vert M_{f^{[2]}}: S_{2p} \times S_{2p} \rightarrow S_{p}  \Vert \gtrsim  p^2.
\]
\end{theorem}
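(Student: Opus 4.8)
The strategy is to exploit the restriction theorem for Schur multipliers with continuous symbol combined with the sharp lower bounds for the triangular truncation. Concretely, since $f(s) = s|s|$ is $C^2$ away from the origin, the symbol $f^{[2]}(\lambda_0, \lambda_1, \lambda_2)$ is continuous on the open set where the $\lambda_i$ are all strictly positive (or all strictly negative). On this region one computes $f^{[2]}$ explicitly: since $f(s) = s^2$ for $s > 0$, the second order divided difference of a quadratic is the constant $1$. So on $(0, \infty)^3$ we have $f^{[2]} \equiv 1$, which is useless directly; the nontrivial behaviour comes from straddling the origin. The plan is therefore to look at the region where two of the variables are positive and one is negative (or vice versa), where $f^{[2]}$ is a genuine nonconstant rational function, and to choose a discrete set $X \times Y \times Z$ inside an open set where $f^{[2]}$ is continuous so that the restricted Schur multiplier reproduces (a scalar multiple of) the triangular truncation pattern, up to a controlled error.

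\textbf{Key steps.} First I would recall from \cite[Theorem 1.19]{LafforgueDelaSalle} that for a symbol $m$ continuous on an open $\Omega \subseteq \mathbb{R}^3$ and any countable $X \times Y \times Z \subseteq \Omega$, the discrete Schur multiplier $M_{m|_{X \times Y \times Z}}$ (acting between the appropriate $S_p(\ell^2)$-spaces as in~\eqref{Eqn=DiscMult}) has norm at most $\|M_m : S_{2p} \times S_{2p} \to S_p\|$. Second, following Davies \cite[Lemma 10]{Davies} but adapted to the second order divided difference function, I would construct points $x_i \to 0^+$, $y_j \to 0^-$, $z_k \to 0^-$ (arranged so the triples stay in a region where $f^{[2]}$ is continuous) such that the matrix of values $f^{[2]}(x_i, y_j, z_k)$ converges, after rescaling, to the kernel of (a tensor amplification of) the triangular truncation projection $T(\xi)_{ij} = \xi_{ij} \mathbf{1}_{i \le j}$. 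The point is that $f^{[2]}$ has a jump-type singularity as the variables cross zero — this is exactly the mechanism Davies used for $f^{[1]}$ of $|s|$, and the second-order version should similarly encode a truncation in the remaining "free" index. Third, I would invoke the sharp lower bound for the norm of the triangular truncation on $S_p$, which by Krein's analysis of the Volterra operator (see \cite{GohbergKrein}) grows like $p$ as $p \to \infty$; applied to the bilinear situation with exponents $2p, 2p \to p$, composing truncations in two independent directions should produce the square, giving the order $p^2$.

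\textbf{Main obstacle.} The hard part will be the explicit construction of the discrete sampling points and the verification that the restricted symbol really does converge (in the appropriate sense — pointwise plus uniform boundedness suffices to pass the norm estimate to the limit, since discrete Schur multipliers are lower semicontinuous under pointwise convergence of uniformly bounded symbols) to a bilinear kernel whose norm we can bound below by a product of two triangular-truncation norms. One must be careful that the "cross term" in the decomposition~\eqref{Eqn=IntroDec} — the factor $\frac{\lambda_0 - \lambda_1}{\lambda_0 - \lambda_2}$ times $f^{[2]}(\lambda_0, \lambda_1, \lambda_1)$ — does not cancel the singular behaviour; tracking which of the six terms in Proposition~\ref{Prop=SchurDecomposition} survives in the chosen regime, or better, working directly with $f^{[2]}$ on a region bounded away from all the diagonals $\{\lambda_i = \lambda_j\}$ except the one responsible for the singularity, is where the care is needed. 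Once the reduction to an honest (bilinear) triangular-truncation lower bound is in place, the $p^2$ asymptotic is immediate from the known Volterra-operator estimates, and a more careful bookkeeping of the same construction — keeping $p^\ast$ rather than only the large-$p$ behaviour — should yield the full $p^2 p^\ast$ claimed in Theorem~\ref{Thm=TheoremB_intro} and stated as Part~2.
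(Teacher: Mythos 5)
Your plan follows the same route as the paper's proof: restrict the (continuous, away from the diagonal issues and the origin) symbol $f^{[2]}$ to discrete sets accumulating at $0$, identify the limit of the sampled symbol with a truncation-type pattern in the spirit of Davies, and feed in Krein's sharp lower bound for the triangular truncation on $S_{2p}$ (the paper takes $\phi_k(i,j,l)=(q^{ki},-q^{kj},q^{kl})$, and the discretization step is done with the \emph{bilinear} restriction result of \cite{CKV}, not the linear statement of \cite{LafforgueDelaSalle} that you quote — see Lemma~\ref{Lem=Discretization}). However, what you set aside as "the hard part" is precisely the mathematical content, and your description of how it should come out is off in two concrete places. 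First, the sampled symbol does \emph{not} converge to (an amplification of) a tensor product of two triangular kernels. In the configuration with the middle variable negative, the limit computed in Lemma~\ref{Lem=Limit} is $-1$ exactly when the middle variable dominates in modulus and $+1$ otherwise, and this $\pm 1$ pattern does not factor as $A(\lambda_0,\lambda_1)B(\lambda_1,\lambda_2)$ with $A,B$ sign/truncation kernels; the usable reading is that the limiting bilinear map has the form $(a,b)\mapsto ab-2\,L(a)U(b)$, i.e.\ the trivial multiplier minus twice a composition of a strictly lower and a strictly upper truncation. So the verification you defer is not a routine adaptation of Davies' first-order computation; it requires the explicit algebraic identity for $f^{[2]}$ on the chosen sign configuration and a correct identification of the limiting operator.

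Second, and more seriously, the step you call "immediate" — bounding the norm "below by a product of two triangular-truncation norms" so that the two directions "produce the square" — is not a valid principle: the $S_p$-norm of a composition $L(a)U(b)$ is in general not bounded below by the product of the two truncation norms (a product of two operators each of large norm can be arbitrarily small). The paper's device is to evaluate the bilinear form at the pair $(x^\ast,x)$ for a \emph{single} near-maximizer $x$ of the truncation on $S_{2p}$: then the truncation contribution is of the form $T^\ast T$ with $T$ a truncation of $x$, and the identity $\Vert T^\ast T\Vert_p=\Vert T\Vert_{2p}^2\gtrsim p^2\Vert x\Vert_{2p}^2$ produces the square, while the remaining term is controlled by $\Vert x\Vert_{2p}^2$. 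Without this positivity/squaring trick the claimed $p^2$ lower bound does not follow from your outline. (Your limiting argument — pointwise convergence of uniformly bounded discrete symbols plus density — and the use of Krein's bound from \cite{GohbergKrein} are fine in spirit; note only that one also needs a small argument, via the diagonal projection and the reverse triangle inequality, to convert the bound for the one-sided truncations $T^{\pm}$ into one for the symmetrized multiplier that actually appears in the limit.)
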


We prove Theorem \ref{Thm=LowerBound1} through a couple of lemmas. 

\begin{lemma}\label{Lem=Limit}
Let  $f(s) = s \vert s\vert, s \in \mathbb{R}$. Let $q \in (0,1)$ and let $i,j,l \in \mathbb{N}$ be such that $i \not = j$ and $j \not = l$. Then 
\begin{equation}\label{Eqn=Limit}
\lim_{k \rightarrow \infty} f^{[2]}(q^{ki}, -q^{kj},  q^{kl})  =
\left\{
\begin{array}{ll}
-1 & \textrm{if }  j< i,  \textrm{ and }  j < l,   \\ 
1  & \textrm{otherwise.}
\end{array}
\right.
\end{equation}
\end{lemma}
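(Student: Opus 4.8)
The strategy is a direct computation of the divided difference function $f^{[2]}$ at the triple $(q^{ki}, -q^{kj}, q^{kl})$ followed by a careful analysis of the limit as $k \to \infty$. Since $f^{[2]}$ is permutation invariant, I may reorder the arguments freely, which simplifies the bookkeeping. The key observation is that $f(s) = s|s|$ restricted to the positive half-line is $s \mapsto s^2$, and restricted to the negative half-line is $s \mapsto -s^2$, so that for \emph{same-sign} arguments the divided difference functions are exactly those of a quadratic (hence constant, equal to $\pm 1$, for the second order one), while the only genuine contribution comes from the sign change between the positive points $q^{ki}, q^{kl}$ and the negative point $-q^{kj}$.

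\textbf{Key steps.} First I would write out $f^{[2]}(q^{ki}, -q^{kj}, q^{kl})$ explicitly using the definition~\eqref{eqn: def_divdiff}: pick a convenient pair of distinct arguments, reduce to first order divided differences $f^{[1]}$ of the three pairs, and then to values of $f$. Concretely, using the standard formula $f^{[2]}(\lambda_0,\lambda_1,\lambda_2) = \sum_{m=0}^2 \frac{f(\lambda_m)}{\prod_{n \ne m}(\lambda_m - \lambda_n)}$, I would substitute $\lambda_0 = q^{ki}$, $\lambda_1 = -q^{kj}$, $\lambda_2 = q^{kl}$ and $f(\lambda_m) = \lambda_m |\lambda_m|$, so $f(q^{ki}) = q^{2ki}$, $f(-q^{kj}) = -q^{2kj}$, $f(q^{kl}) = q^{2kl}$. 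This gives an exact rational expression in the variable $t := q^k \in (0,1)$, namely a sum of three terms with numerators $t^{2i}, -t^{2j}, t^{2l}$ over products of differences of the form $t^i \pm t^j$, $t^i - t^l$, etc. Second, I would let $t \to 0^+$ (equivalently $k \to \infty$) and extract the leading-order behaviour in each term: since all exponents are positive integers, each term behaves like a power of $t$, and the dominant term is governed by the smallest exponent, which is where the relative order of $i, j, l$ enters. Third, I would carry out the case analysis: when $j < i$ and $j < l$ the negative point $-q^{kj}$ is (in absolute value) the largest, and the leading term comes from $f(-q^{kj})/\big((-t^j - t^i)(-t^j - t^l)\big) \sim -t^{2j}/t^{2j} = -1$; in the remaining cases one of the positive points dominates and the analogous computation yields $+1$. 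I would verify that the subleading terms vanish in the limit and that the cases $i < j$, $l < j$, and $j$ between $i$ and $l$ (in either order) all fall into the ``otherwise'' branch giving $1$.

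\textbf{Expected main obstacle.} The computation itself is elementary, so the only real care needed is in the case analysis: one must make sure every ordering of $i, j, l$ (subject to $i \ne j$, $j \ne l$, but allowing $i = l$) is covered, and that the leading-order cancellations are tracked correctly — in particular, when two positive exponents coincide or are close, one should check that the two positive-point terms do not conspire to cancel the expected leading behaviour. A clean way to avoid fragile arithmetic is to factor out the smallest power of $t$ appearing across all three terms before taking the limit, so that the limit becomes a substitution $t = 0$ into a manifestly continuous expression; the value at $t = 0$ is then read off immediately, and the sign is dictated by which of $f(\lambda_0)$, $f(\lambda_1)$, $f(\lambda_2)$ sits over the surviving (nonzero) denominator. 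This reduces the whole lemma to a short finite check.
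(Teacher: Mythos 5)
Your route — writing $f^{[2]}(q^{ki},-q^{kj},q^{kl})$ as an explicit rational function of $t=q^k$ and reading off the limit from the ordering of $i,j,l$ — is in substance the paper's own proof; the paper merely expands through nested first-order differences and simplifies to the single fraction $f^{[2]}(\lambda_0,-\widetilde{\lambda}_1,\lambda_2)=\bigl((\lambda_0+\lambda_2)\widetilde{\lambda}_1+\lambda_0\lambda_2-\widetilde{\lambda}_1^2\bigr)/\bigl((\lambda_0+\widetilde{\lambda}_1)(\widetilde{\lambda}_1+\lambda_2)\bigr)$ before letting $k\to\infty$, whereas you start from the symmetric Lagrange form $f^{[2]}(\lambda_0,\lambda_1,\lambda_2)=\sum_m f(\lambda_m)/\prod_{n\ne m}(\lambda_m-\lambda_n)$ and argue term by term.

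There is, however, one concrete step that fails as stated: the Lagrange form requires the three nodes to be pairwise distinct, but the lemma only assumes $i\ne j$ and $j\ne l$, so $i=l$ (i.e.\ $\lambda_0=\lambda_2=q^{ki}$) is allowed — a case you explicitly say you will include in the case analysis. For $i=l$ the two terms carrying the factor $(\lambda_0-\lambda_2)$ in the denominator are undefined, so the ``dominant term governed by the smallest exponent'' analysis cannot even be set up. The repair is short and worth making explicit: either treat $i=l$ separately via $f^{[2]}(\lambda,\lambda,\mu)=\bigl(f'(\lambda)-f^{[1]}(\lambda,\mu)\bigr)/(\lambda-\mu)$, which for $\lambda=q^{ki}>0$, $\mu=-q^{kj}$ gives $(\lambda^2+2\lambda\widetilde{\mu}-\widetilde{\mu}^2)/(\lambda+\widetilde{\mu})^2$ with $\widetilde{\mu}=q^{kj}$, hence limit $-1$ if $j<i$ and $+1$ if $j>i$, consistent with the claim; or first combine your three terms over a common denominator, in which case the factor $(\lambda_0-\lambda_2)$ cancels (this is exactly how the paper's single fraction arises), and the resulting expression is continuous across $\lambda_0=\lambda_2$, so the degenerate case follows by continuity of second divided differences of the $C^1$ function $f$ on the region where the arguments avoid a common value. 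With that patch your leading-order case analysis covers all admissible $(i,j,l)$ and reproduces the stated limits.
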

\begin{proof}
Let $\lambda_0, \lambda_2 > 0$ and $\lambda_1 < 0$, then $f(\lambda_0) = \lambda_0^2, f(\lambda_1) = - \lambda_1^2, f(\lambda_2) = \lambda_2^2$. First expand
\begin{equation}\label{Eqn=f2expand}
\begin{split}
f^{[2]}(\lambda_0, \lambda_1, \lambda_2) = & \frac{ f^{[1]}(\lambda_0, \lambda_1)  - f^{[1]}(\lambda_1, \lambda_2)  }{ \lambda_0 - \lambda_2}\\
 = & \frac{1}{\lambda_0 - \lambda_2} \left(   \frac{f(\lambda_0) - f(\lambda_1)}{ \lambda_0 - \lambda_1}  - \frac{f(\lambda_1) - f(\lambda_2)}{ \lambda_1  - \lambda_2} \right) \\
 = &  \frac{1}{\lambda_0 - \lambda_2} \left(   \frac{ \lambda_0^2 + \lambda_1^2}{ \lambda_0 - \lambda_1}  - \frac{ - \lambda_1^2 - \lambda_2^2}{ \lambda_1  - \lambda_2} \right).
\end{split}
\end{equation}
We set $\lambdaw_1 := -\lambda_1$. Then $\lambda_0, \lambdaw_1, \lambda_2 >0$ and
\begin{equation}\label{Eqn=DividedDifference}
\begin{split}
f^{[2]}(\lambda_0, -\lambdaw_1, \lambda_2) = & \frac{(\lambda_0^2 + \lambdaw_1^2)(\lambdaw_1 + \lambda_2) - (\lambdaw_1^2 + \lambda_2^2)(\lambda_0 + \lambdaw_1)  }{(\lambda_0 - \lambda_2)(\lambda_0 + \lambdaw_1) (\lambdaw_1 + \lambda_2) } \\
= &  \frac{\lambda_0^2 \lambdaw_1 + \lambda_0^2 \lambda_2 + \lambdaw_1^2 \lambda_2 - \lambdaw_1^2 \lambda_0 - \lambda_2^2 \lambda_0   - \lambda_2^ 2 \lambdaw_1 }{(\lambda_0 - \lambda_2)(\lambda_0 + \lambdaw_1) (\lambdaw_1 + \lambda_2) } \\ 
= &  \frac{ (\lambda_0 + \lambda_2)\lambdaw_1 + \lambda_0 \lambda_2 - \lambdaw_1^2  }{ (\lambda_0 + \lambdaw_1) (\lambdaw_1 + \lambda_2) }. \\ 
\end{split}
\end{equation}
Let $q \in (0,1)$ as in the statement of the lemma and let $k \in \mathbb{N}$.  Set 
$\lambda_0 = q^{ki}, \lambdaw_1  = q^{kj}, \lambda_2 = q^{kl}$, 
where~$i,j,l \in \mathbb{N}$ are  natural numbers with $i \not = j$ and  $j \not = l$. By considering each of the 6 possible orderings 
 of~$\lambda_0, \lambdaw_1$, and $\lambda_2$, we see from~\eqref{Eqn=DividedDifference} that
\begin{equation}\label{Eqn=Limit}
\lim_{k \rightarrow \infty} f^{[2]}(q^{ki}, -q^{kj},  q^{kl})  =
\left\{
\begin{array}{ll}
-1 & \textrm{if }  j< i \textrm{ and } j < l.  \\
1  & \textrm{otherwise.}
\end{array}
\right.
\end{equation}
This concludes the proof.    
\end{proof}

\begin{lemma}\label{Lem=Discretization}
Let  $f(s) = s \vert s\vert, s \in \mathbb{R}$. Let $q \in (0,1)$ and for $i,j,l \in \mathbb{N}$, let ${\phi_k(i,j,l) := (q^{ki}, -q^{kj}, q^{kl})}$.  Then for all $1 < p < \infty$ we have
\begin{equation*}
     \Vert M_{f^{[2]} \circ \phi_k}: S_{2p}(  \ell^2(\mathbb{N}) ) \times S_{2p}(\ell^2(\mathbb{N}) ) \rightarrow   S_{p}(\ell^2(\mathbb{N})) \Vert  \leq  \Vert M_{f^{[2]}}: S_{2p}  \times S_{2p}  \rightarrow  S_{p}  \Vert.
\end{equation*}
\end{lemma}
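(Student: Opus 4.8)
The statement to prove is Lemma~\ref{Lem=Discretization}: restricting the symbol $f^{[2]}$ to the discrete grid $\phi_k(\mathbb{N}^3) = \{(q^{ki},-q^{kj},q^{kl}) : i,j,l \in \mathbb{N}\} \subseteq \mathbb{R}^3$ does not increase the bilinear Schur multiplier norm. The natural approach is to invoke the restriction theorem for Schur multipliers: for a symbol that is continuous on an open set $\Omega \subseteq \mathbb{R}^{2}$ (or here $\mathbb{R}^3$), restricting to any discrete subset of $\Omega$ yields a Schur multiplier of norm at most that of the original. This is exactly the mechanism cited in the introduction as \cite[Theorem 1.19]{LafforgueDelaSalle}, stated there for linear Schur multipliers; the bilinear (more generally multilinear) analogue follows by the same argument, or can be quoted from the multilinear literature. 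So the first step is to set up the right ambient open set.

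\textbf{Step 1: choice of the open set and continuity.} The points $X := \{q^{ki} : i \in \mathbb{N}\} \subseteq (0,1]$ and $-Y := \{-q^{kj} : j \in \mathbb{N}\}$ are discrete in $\mathbb{R}$ (they accumulate only at $0$, which is not among them), and $X \times (-Y) \times X \subseteq \mathbb{R}^3$ is therefore a discrete set. Crucially, this discrete set avoids the diagonal $\Delta = \{(\lambda,\lambda,\lambda)\}$: indeed any point $(q^{ki},-q^{kj},q^{kl})$ has first and second coordinates of opposite sign, hence unequal. Thus the discrete set lies inside the open set $\mathbb{R}^3 \setminus \{\lambda_0 = \lambda_1\}$ (or even in the larger open set on which $f^{[2]}$ is continuous away from the diagonal), and on this open set $f^{[2]}$ — defined for $f(s) = s|s|$ as in the Remark following Definition~\ref{def: divdiff} — is continuous. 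Here one uses that $f(s) = s|s|$ is $C^1$ on all of $\mathbb{R}$ and $C^2$ away from $0$, so $f^{[2]}$ is continuous at every triple that is not on the diagonal (the only place where the $C^2$-failure of $f$ at $0$ could bite is a triple $(0,0,0)$, which is excluded). One should double-check that continuity of $f^{[2]}$ on $\mathbb{R}^3 \setminus \Delta$ holds: the formula in Definition~\ref{def: divdiff} expresses $f^{[2]}$ through $f^{[1]}$, which is continuous since $f \in C^1$, and the denominators $\lambda_i - \lambda_j$ can be chosen nonzero locally, so $f^{[2]}$ is locally a quotient of continuous functions with nonvanishing denominator, hence continuous.

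\textbf{Step 2: apply the restriction theorem.} With $F = X$, $G = -Y$ viewed as disjoint subsets indexing an $\ell^2$-space, one realizes $M_{f^{[2]}\circ\phi_k}$ as the bilinear Schur multiplier on $S_2(\ell^2(F),\ell^2(G)) \times S_2(\ell^2(G),\ell^2(F)) \to S_2(\ell^2(F),\ell^2(F))$ with symbol $(i,j,l) \mapsto f^{[2]}(q^{ki},-q^{kj},q^{kl})$, exactly in the framework of~\eqref{Eqn=DiscMult} and the paragraph following it. The restriction theorem then gives that the norm $S_{2p} \times S_{2p} \to S_p$ of this discrete multiplier is bounded by the norm $S_{2p} \times S_{2p} \to S_p$ of $M_{f^{[2]}}$ itself (the continuous multiplier on $\mathbb{R}$). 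Concretely: the discrete multiplier is obtained by conjugating the continuous one by the isometric embedding $\ell^2(X \sqcup Y) \hookrightarrow L^2(\mathbb{R})$ sending basis vectors to normalized bump functions (or point masses in a suitable limiting sense) concentrated near the grid points, and the diagonal/continuity hypothesis ensures that in the limit of shrinking bumps the symbol seen by the embedded operators converges pointwise (and boundedly) to $f^{[2]}\circ\phi_k$; this is precisely the argument behind \cite[Theorem 1.19]{LafforgueDelaSalle}. I would cite that theorem (and its straightforward bilinear extension) rather than redo the approximation.

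\textbf{Main obstacle.} The only genuinely delicate point is justifying that the restriction theorem applies at the grid points even though $f^{[2]}$ is \emph{not} globally continuous — it is badly behaved on the diagonal $\Delta$, and for $f(s)=s|s|$ one has set $f^{[2]}(\lambda,\lambda,\lambda)=0$ by fiat rather than by continuity. The resolution, as noted in Step 1, is that the chosen grid stays a fixed positive distance (depending on $q,k$) away from $\Delta$ — in fact away from the hyperplane $\{\lambda_0 = \lambda_1\}$ — because the first and second coordinates always have strictly opposite signs. Hence only the behavior of $f^{[2]}$ on the open set where it \emph{is} continuous is relevant, and the restriction theorem applies verbatim there. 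A secondary, purely bookkeeping point is that \cite[Theorem 1.19]{LafforgueDelaSalle} is stated for linear Schur multipliers; one should remark that the same proof (isometric embedding of $\ell^2$ into $L^2$ via shrinking bumps, continuity of the symbol to pass to the limit) gives the bilinear statement, or alternatively cite the multilinear transference/restriction results already used elsewhere in the paper (e.g.\ \cite{CKV,CJKM}).
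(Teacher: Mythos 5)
Your proposal follows essentially the same route as the paper: observe that every grid point $(q^{ki},-q^{kj},q^{kl})$ stays off the set where $f^{[2]}$ fails to be continuous (the first two coordinates have opposite signs), invoke a restriction/discretization theorem for Schur multipliers with continuous symbols, and identify $M_{f^{[2]}\circ\phi_k}$ with the restricted multiplier via the obvious identification of $\ell^2(\mathbb{N})$ with $\ell^2(\{q^{ki}\})$ and $\ell^2(\{-q^{kj}\})$. The paper's tool is precisely the bilinear discretization result~\cite[Theorem 2.2]{CKV}, which is the fallback you mention, so conceptually you and the paper agree.

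There is, however, one concrete point you gloss over and the paper treats with care. Although each grid point is a continuity point of $f^{[2]}$, the grid is \emph{not} uniformly separated from the bad set: as $i,j,l\to\infty$ the points $\phi_k(i,j,l)$ accumulate at $(0,0,0)\in\Delta$, where $f^{[2]}$ (with the convention $f^{[2]}(\lambda,\lambda,\lambda)=0$ for $f(s)=s|s|$) is genuinely discontinuous — Lemma~\ref{Lem=Limit} shows the limits along the grid are $\pm 1$. Consequently the full grid is not contained in any product region $X_\delta^3$, $X_\delta=(-\infty,-\delta)\cup(\delta,\infty)$, on which the symbol is continuous, which is the setting in which~\cite[Theorem 2.2]{CKV} is applied in the paper. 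The paper therefore first proves the bound for \emph{finite} sets $F,G\subseteq\mathbb{R}\setminus\{0\}$ (these do sit inside some $X_\delta$, so the discretization theorem applies, preceded and followed by restrictions of the domain), and then passes to the infinite grid by a supremum/density argument over finite subsets, before conjugating by the unitaries $U_k,V_k$. Your claim that "the restriction theorem applies verbatim" to the infinite grid with ambient set $\mathbb{R}^3\setminus\{\lambda_0=\lambda_1\}$ presumes a version of the theorem whose hypotheses are purely local at the sampled points (which the shrinking-bump argument you sketch would indeed deliver, but which is not what is cited in the paper, and \cite[Theorem 1.19]{LafforgueDelaSalle} is linear); to make your write-up airtight you should either prove that bilinear local version or insert the finite-exhaustion and density step as in the paper.
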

\begin{proof}
   Let $F, G \subseteq \mathbb{R}$ be finite sets not containing 0. Then $F \cup G $ is contained in a set $X_\delta \subseteq \mathbb{R}$ of the form $(- \infty, -\delta) \cup (\delta, \infty)$ for some $\delta > 0$. Note that $f^{[2]}$ is continuous on $X_\delta \times X_\delta  \times X_\delta $ and so we may apply \cite[Theorem 2.2]{CKV}.  
By using respectively a restriction of the domain of a bilinear Schur multiplier, then applying \cite[Theorem 2.2]{CKV} and then again a restriction of the domain, we get 
\[
\begin{split}
& \Vert M_{f^{[2]}}: S_{2p}(\ell^2(F), \ell^2(G)) \times S_{2p}(\ell^2(G), \ell^2(F)) \rightarrow   S_{p}(\ell^2(F)) \Vert \\
&\qquad \leq   \Vert M_{f^{[2]}}: S_{2p}(\ell^2(F \cup G)) \times S_{2p}(\ell^2(F \cup G)) \rightarrow   S_{p}(\ell^2(F \cup G)) \Vert \\
&\qquad \leq  \Vert M_{f^{[2]}}: S_{2p}(L^2(X_\delta)) \times S_{2p}(L^2(X_\delta)) \rightarrow  S_{p}(L^2(X_\delta)) \Vert \\
&\qquad  \leq \Vert M_{f^{[2]}}: S_{2p}(L^2(\mathbb{R})) \times S_{2p}(L^2(\mathbb{R})) \rightarrow  S_{p}(L^2(\mathbb{R})) \Vert. 
\end{split}
\]
Now let $F, G \subseteq \mathbb{R}$ be any subsets not containing 0. The  union of all vector spaces $S_{p}(\ell^2(F_0), \ell^2(G_0)) $ with $F_0 \subseteq F, G_0\subseteq G$ finite is dense in $S_{p}(\ell^2(F), \ell^2(G))$. Therefore we have, 
\[
\begin{split}
&  \Vert M_{f^{[2]}}: S_{2p}(\ell^2(F), \ell^2(G)) \times S_{2p}(\ell^2(G), \ell^2(F)) \rightarrow   S_{p}(\ell^2(F)) \Vert \\
& \qquad = \sup_{F_0 \subseteq F, G_0 \subseteq G  \textrm{ finite}} \Vert  M_{f^{[2]}}: S_{2p}(\ell^2(F_0), \ell^2(G_0)) \times S_{2p}(\ell^2(G_0), \ell^2(F_0)) \rightarrow   S_{p}(\ell^2(F_0)) \Vert \\
 & \qquad \leq  \Vert M_{f^{[2]}}: S_{2p}(L^2(\mathbb{R})) \times S_{2p}(L^2(\mathbb{R})) \rightarrow  S_{p}(L^2(\mathbb{R})) \Vert.
\end{split}
\]
Now let 
\[ 
F_k =  \{ q^{k i } \mid i \in \mathbb{N} \}, \quad 
G_k =   - F_k = \{ - q^{k i } \mid i \in \mathbb{N} \}.
\]
 Let $\delta_x$ be as in Section~\ref{Sect=SchurPrelim}  and define unitary maps 
\[
U_k: \ell^2(\mathbb{N}) \rightarrow \ell^2(F_k): \delta_n \mapsto \delta_{q^{kn}}, \quad  V_k: \ell^2( \mathbb{N} ) \rightarrow \ell^2( G_k): \delta_n \mapsto  \delta_{-q^{kn}}.
\] 
 By interpreting these maps as base change operators, one can relate $M_{f^{[2]}\circ\phi_k}$ and $M_{f^{[2]}}$ via 
\[
M_{f^{[2]} \circ \phi_k}( x, y ) = U_k^\ast M_{f^{[2]}} ( U_k x V_k^\ast, V_k y U_k ) U_k^\ast, \quad x,y \in S_2(\ell^2(\mathbb{N})).
\]
Therefore
\[
\begin{split}
& \Vert M_{f^{[2]} \circ \phi_k}: S_{2p}(\ell^2(\mathbb{N}) ) \times S_{2p}(\ell^2(\mathbb{N}) ) \rightarrow   S_{p}(\ell^2(\mathbb{N})) \Vert \\
& \qquad  =  \Vert M_{f^{[2]} }: S_{2p}(\ell^2(F_k), \ell^2(G_k)) \times S_{2p}(\ell^2(G_k), \ell^2( F_k  ) ) \rightarrow   S_{p}(F_k) \Vert \\
& \qquad  \leq \Vert M_{f^{[2]}}: S_{2p}(L^2(\mathbb{R})) \times S_{2p}(L^2(\mathbb{R})) \rightarrow  S_{p}(L^2(\mathbb{R})) \Vert.
\end{split}
\]
This concludes the proof. 
\end{proof}

\begin{proof}[Proof of Theorem \ref{Thm=LowerBound1}]     
Let $T^\pm = T_{\widetilde{h_{\pm}}}: S_{2p}(\ell^2(\mathbb{N})) \rightarrow S_{2p}(\ell^2(\mathbb{N}))$ be the triangular truncation given by the Schur multiplier with symbol,
\[
\widetilde{h_\pm}(\lambda, \mu) = h_\pm(\lambda - \mu), \quad  
h_\pm(\lambda) = \left\{
\begin{array}{ll}
1 & \textrm{ if } \pm \lambda < 0, \\
0     & \textrm{ if } \pm \lambda \geq 0,
\end{array}
\right.
\] 
There exist constants $C, D >0$ such that for all $1<p<\infty$,
\begin{equation}\label{Eqn=TriangleEstimate0}
 C  p    < \Vert T^{\pm}: S_{2p}( \ell^2(\mathbb{N}) )  \rightarrow S_{2p}( \ell^2(\mathbb{N}) )  \Vert < D p.
\end{equation}
The lower bound of this inequality, which is  well-known and most relevant to us,  follows for instance from the explicit sequence of singular values of the Volterra operator due to Krein (see~\cite[Theorem IV.8.2 and IV.7.4]{GohbergKrein}).    Now set $M^+ = T^+ - T^-$ and $M^- = T^- - T^+$.  Let $P$ be the projection of $S_{2p}(\ell^2(\mathbb{N}))$ onto the diagonal elements. Then $P$ is a contraction (see~\cite[Lemma 2.1]{CPSZ-JOT}). {Note that $M^\pm$ is a Schur multiplier acting on $S_{2p}( \ell^2(\mathbb{N}) )$ with symbol $\widetilde{H}_{\pm}(\lambda, \mu) = H_\pm(\lambda - \mu)$, $\lambda, \mu \in \mathbb{N}$, and $H_{\pm}(\lambda) = \pm 1$ if $\pm \lambda < 0$. Similarly, $P$ is a Schur multiplier with symbol $p(\lambda, \mu) = 1$ if $\lambda = \mu$ and $p(\lambda, \mu) = 0$ otherwise, where again $\lambda, \mu \in \mathbb{N}$. In particular,  $T^+ = \frac{1}{2} (M^+ + {\rm Id} - P)$. }Therefore, from  \eqref{Eqn=TriangleEstimate0} we get by the reverse triangle inequality
\[
  2C p - 2 < \Vert (M^+ + {\rm Id} - P)   \Vert - \Vert  {\rm Id} - P \Vert \leq   
  \Vert  M^+ \Vert,
\]
where all norms are  operator norms of linear maps on $ S_{2p}( \ell^2(\mathbb{N}) )$. {For $1 <  p < \infty$ we have by~\eqref{Eqn=SchurElement}, which also hold for discrete symbols, 
\[
1 = \Vert \widetilde{H}_{+} \Vert_{\ell^\infty(\mathbb{N} \times \mathbb{N})}  = \Vert M^+:   S_{2}( \ell^2(\mathbb{N}) ) \rightarrow  S_{2}( \ell^2(\mathbb{N}) )\Vert\leq \Vert M^+:   S_{2p}( \ell^2(\mathbb{N}) ) \rightarrow  S_{2p}( \ell^2(\mathbb{N}) )\Vert.
\]}
We thus obtain $\max( 2Cp - 2, 1) \leq \Vert M^+: S_{2p}( \ell^2(\mathbb{N}) ) \rightarrow  S_{2p}( \ell^2(\mathbb{N}) ) \Vert$. Furthermore, for any~${1 < p < \infty}$ we have $\frac{2}{3} C p < \max( 2Cp - 2, 1) $. Altogether, we see that for all  
  $1 < p < \infty$ we have 
\begin{equation}\label{Eqn=TriangleEstimate}
\frac{2}{3} C  p    < \Vert M^{+}: S_{2p}( \ell^2(\mathbb{N}) )  \rightarrow S_{2p}( \ell^2(\mathbb{N}) ) \Vert.
\end{equation}
Now fix $1 < p < \infty$. By~\eqref{Eqn=TriangleEstimate}  we can for any $\epsilon > 0$ choose $x \in S_{2p}(\ell^2(\mathbb{N})) $ such that
\begin{equation}\label{Eqn=AsymptoticsTrunc}
\Vert M^+(x)  \Vert_{2p} > \frac{2}{3} C    p   (\Vert x \Vert_{2p} - \epsilon).
\end{equation}
It follows that
\[
\begin{split}
 \Vert M^-( x^\ast) M^+(  x )\Vert_p =
\Vert M^+(x)^\ast M^+(x)\Vert_p 
 = 
 \Vert   M^+(x)\Vert_{2p}^2 > \frac{4}{9}  C^2 p^2  (\Vert x \Vert_{2p} - \epsilon)^2.
\end{split}
\]
Now for $i,j,l \in \mathbb{N}$ as in Lemma \ref{Lem=Discretization} we define $\phi_k(i,j,l) := (q^{ki}, -q^{kj}, q^{kl})$.
Then the limit in Lemma \ref{Lem=Limit}    shows that for $x,y \in S_{2p}(\ell^2(F))$ with $F \subseteq \mathbb{N}$ finite    we have   
\begin{equation}\label{Eqn=LimitDiscreteMultipliers}
 \begin{split}
 \lim_k   M_{f^{[2]} \circ \phi_k}((1-P)(y),(1-P)(x))   = &
 \lim_k  \!\!\!\!\!\!\!\! \sum_{\substack{\lambda_0, \lambda_1, \lambda_2 \in F,\\ \lambda_0 \not = \lambda_1, \lambda_1 \not = \lambda_2 }} \!\!\!\!
  (f^{[2]} \circ \phi_k)(\lambda_0, \lambda_1, \lambda_2)  p_{\lambda_0} x p_{\lambda_1} y p_{\lambda_2} \\
= &    M^-((1-P)(y)) M^+((1-P)(x)) \\ 
= &  M^-(y) M^+(x). 
\end{split}
\end{equation} 
 As we are deal with finite dimensional spaces, the limit \eqref{Eqn=LimitDiscreteMultipliers} holds in the norm of $S_{p}(\ell^2(F))$. Moreover, as~$M_{f^{[2]} \circ \phi_k}$ is bounded uniformly in $k$ by Lemma \ref{Lem=Discretization}, it follows by density of the span of $\{ S_{2p}(\ell^2(F)) \mid F \subseteq \mathbb{N} \textrm{ finite} \}$ in $S_{2p}(\ell^2(\mathbb{N}))$ that this convergence holds for any~$x,y \in S_{2p}(\ell^2(\mathbb{N}))$.

We now have the estimates 
\[
\begin{split}
\frac{4}{9}  C^2 p ^2 (\Vert x \Vert_{2p} - \epsilon)^2 < & \Vert M^-( x^\ast) M^+(  x ) \Vert_p \\
 \leq  &
   \limsup_k \Vert M_{f^{[2]} \circ \phi_k}((1-P)(x)^\ast,  (1-P)(x) ) \Vert_{p}.
   \end{split}
   \]
  Thus by Lemma \ref{Lem=Discretization} we get, 
   \[
   \begin{split}
 \frac{4}{9} C^2 p ^2 (\Vert x \Vert_{2p} - \epsilon)^2 < & \Vert M_{f^{[2]}}: S_{2p} \times S_{2p} \rightarrow S_p \Vert \Vert(1-P)(x) \Vert_{2p}^2 \\
    \leq &  4 \Vert M_{f^{[2]}}: S_{2p} \times S_{2p} \rightarrow S_p \Vert \Vert x \Vert_{2p}^2. 
    \end{split}
\]
Hence 
\[
\Vert M_{f^{[2]}}: S_{2p} \times S_{2p} \rightarrow S_{p}  \Vert \gtrsim  p^2.
\]
\end{proof}
 
 Note that if $p \searrow 1$, then $2p \searrow 2$ and hence the norm in~\eqref{Eqn=TriangleEstimate} remains bounded. Therefore, we need a different proof to treat  $p \searrow 1$, which we present below as a separate theorem. Since many parts of the proof are similar to the proof of Theorem \ref{Thm=LowerBound1} we present it in a more concise manner.  
 
\begin{theorem}[Theorem B, Part 2]\label{Thm=LowerBound2}
Let  $f(s) = s \vert s\vert, s \in \mathbb{R}$. Then for every  $1 < p < \infty$ we have
\[
\Vert M_{f^{[2]}}: S_{2p} \times S_{2p} \rightarrow S_{p}  \Vert \gtrsim  p^\ast. 
\]
\end{theorem}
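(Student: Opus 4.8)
The plan is to dualize the bilinear map so that the $S_p$ in the target is traded for an $S_{p^\ast}$ in the source, and then to read off a triangular truncation acting on $S_{p^\ast}$, whose norm blows up like $p^\ast$ as $p\searrow 1$.

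First I would record a duality identity. Associated with $M_{f^{[2]}}$ is the trilinear form $(x,y,z)\mapsto\langle M_{f^{[2]}}(x,y),z\rangle=\int_{\mathbb{R}^3}f^{[2]}(s_0,s_1,s_2)\,x(s_0,s_1)y(s_1,s_2)z(s_2,s_0)\,ds$. Since $f^{[2]}$ is permutation invariant this form is invariant under cyclic relabelling of $(x,y,z)$, and, using that transposition is an isometry of every Schatten class and that these classes are reflexive, one obtains $\|M_{f^{[2]}}\colon S_{2p}\times S_{2p}\to S_p\|=\|M_{f^{[2]}}\colon S_{p^\ast}\times S_{2p}\to S_{(2p)^\ast}\|$, where $\tfrac1{(2p)^\ast}=\tfrac1{p^\ast}+\tfrac1{2p}$ (equivalently: a partial adjoint of $M_{f^{[2]}}$ is again $M_{f^{[2]}}$ up to a transposition).

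Next, exactly as in the proof of Theorem~\ref{Thm=LowerBound1}, Lemma~\ref{Lem=Discretization} and Lemma~\ref{Lem=Limit} apply with the exponent triple $(2p,2p,p)$ replaced throughout by $(p^\ast,2p,(2p)^\ast)$: the norm above dominates that of $M_{f^{[2]}\circ\phi_k}$ uniformly in $k$, and letting $k\to\infty$ — discarding the diagonal contributions, which form a trilinear map bounded by an absolute constant times the product of the three norms, by Hölder's inequality for three factors together with the contractivity of $P$ — gives, for all finitely supported $u,v,w$,
\[
|\mathrm{Tr}(\mathcal{T}_L(u)\,\mathcal{T}_U(v)\,w)|\lesssim\bigl(\|M_{f^{[2]}}\colon S_{2p}\times S_{2p}\to S_p\|+1\bigr)\,\|u\|_{p^\ast}\|v\|_{2p}\|w\|_{2p},
\]
where $\mathcal{T}_L,\mathcal{T}_U$ are the strictly lower, resp.\ strictly upper, triangular truncations on $\ell^2(\mathbb{N})$. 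Taking the supremum over $\|w\|_{2p}\le1$ turns the left-hand side into $\sup_{u,v}\|\mathcal{T}_L(u)\mathcal{T}_U(v)\|_{(2p)^\ast}$, and the crux is to bound this from below by $\gtrsim p^\ast$. Given a finitely supported $u$ with singular value decomposition $\mathcal{T}_L(u)=\sum_j a_j\, f_j\otimes e_j$ (all vectors supported on $\{1,\dots,N\}$), I would take $v:=\sum_j a_j^{p^\ast/2p}\, e_j\otimes\delta_{N+j}$: every non-zero entry of $v$ has row index $\le N<N+j$, so $v$ is strictly upper triangular and $\mathcal{T}_U(v)=v$, while $\mathcal{T}_L(u)v=\sum_j a_j^{1+p^\ast/2p}f_j\otimes\delta_{N+j}$, and the identity $\bigl(1+\tfrac{p^\ast}{2p}\bigr)(2p)^\ast=p^\ast$ (which is precisely the Hölder relation) yields $\|\mathcal{T}_L(u)v\|_{(2p)^\ast}=\|\mathcal{T}_L(u)\|_{p^\ast}\|v\|_{2p}$. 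Hence $\sup_{u,v}\|\mathcal{T}_L(u)\mathcal{T}_U(v)\|_{(2p)^\ast}\ge\|\mathcal{T}_L\colon S_{p^\ast}\to S_{p^\ast}\|$, which by~\eqref{Eqn=TriangleEstimate} — using $\mathcal{T}_U=\tfrac12(M^++\mathrm{Id}-P)$ and $\|\mathcal{T}_L\|=\|\mathcal{T}_U\|$ (transposition) — is $\gtrsim p^\ast$ as soon as $p^\ast\ge 2$.

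Combining the last two paragraphs gives $\|M_{f^{[2]}}\colon S_{2p}\times S_{2p}\to S_p\|\gtrsim p^\ast$ for all $p$ close enough to $1$; for $p$ bounded away from $1$, $p^\ast$ is bounded and Theorem~\ref{Thm=LowerBound1} already gives $\|M_{f^{[2]}}\|\gtrsim p^2\gtrsim 1$, so taking the larger of the two bounds yields the claim for every $1<p<\infty$. The genuinely new point — and the step I expect to be the main obstacle — is the choice of $v$ above: the operators realizing the large $S_{p^\ast}$-norm of the triangular truncation are necessarily supported on high-dimensional spaces, so a careless choice of the ``connecting'' operator $v$ (for instance a partial isometry) would cost a factor $\|v\|_{2p}\approx N^{1/2p}$ and destroy the estimate; the remedy is to tune the singular values of $v$ to those of $\mathcal{T}_L(u)$ so that Hölder's inequality is saturated, while keeping $v$ triangular by putting its range and domain on disjoint blocks of indices.
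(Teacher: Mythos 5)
Your proposal is correct, but it takes a genuinely different route from the paper's. The paper does not dualize: instead it introduces a second discretization, $\phi_k(i,j,l)=(q^{ki},q^{k(i+l)},-q^{kl})$, whose middle point is infinitesimally small compared with the outer two and does not depend on the middle index at all, so the bilinear multiplier collapses in the limit to a \emph{linear} signed triangular truncation acting on the product $yx\in S_p$; the known bound $\Vert M^+ : S_p\to S_p\Vert\approx pp^\ast$ from~\eqref{Eqn=TriangleEstimateSecond} together with a factorization $z=yx$, $\Vert z\Vert_p=\Vert y\Vert_{2p}\Vert x\Vert_{2p}$, then yields the $p^\ast$. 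You instead keep the discretization of Lemma~\ref{Lem=Limit}, pass to the cyclically permuted trilinear form (legitimate: $f^{[2]}$ is permutation invariant and the pairing $\mathrm{Tr}(M_m(x,y)z)$ is absolutely convergent for $S_2$ kernels, so the arrangement $(p^\ast,2p,(2p)^\ast)$ carries the same norm), transfer Lemma~\ref{Lem=Discretization} verbatim to these exponents via \cite[Theorem 2.2]{CKV}, and read the limit — correctly, this is exactly what Lemma~\ref{Lem=Limit} gives — as $\mathrm{Tr}\bigl((1-P)u\,(1-P)v\,w\bigr)-2\,\mathrm{Tr}\bigl(\mathcal{T}_L(u)\mathcal{T}_U(v)w\bigr)$ with a H\"older-bounded first term. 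The genuinely new ingredient, the connector $v=\sum_j a_j^{p^\ast/2p}e_j\otimes\delta_{N+j}$, checks out: the exponent identity $(1+\tfrac{p^\ast}{2p})(2p)^\ast=p^\ast$ makes H\"older an equality, the disjoint column block forces $\mathcal{T}_U(v)=v$, and one obtains $\Vert\mathcal{T}_L:S_{p^\ast}\to S_{p^\ast}\Vert\lesssim\Vert M_{f^{[2]}}\Vert+1$, which combined with~\eqref{Eqn=TriangleEstimate0} and the fallback to Theorem~\ref{Thm=LowerBound1} for $p^\ast$ bounded gives the claim. What your route buys: no second limit computation is needed, and the mechanism producing $p^\ast$ — a truncation acting on $S_q$ with $q=p^\ast\to\infty$ — is completely transparent; what the paper's route buys: it avoids the duality step and the construction of the connecting operator, since collapsing the middle variable places the truncation directly on $S_p$, and it needs no case distinction in $p$.
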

\begin{proof}
   Assume that $\lambda_0 >0$,  $\lambda_1 \geq  0$,  $\lambda_2 < 0$  so that $f(\lambda_0) = \lambda_0^2$, $f(\lambda_1) = \lambda_1^2$, $f(\lambda_2) = -\lambda_2^2$. In the proof we will take $\lambda_1$ to be   very close to zero and infinitesimally smaller  than both $\lambda_0$ and  $\vert \lambda_2 \vert$.  
As in~\eqref{Eqn=f2expand}, we expand 
\begin{equation}\label{Eqn=Secondexpand}
\begin{split}
f^{[2]}(\lambda_0, \lambda_1, \lambda_2) = &    \frac{1}{\lambda_0 - \lambda_2} \left(   \frac{ \lambda_0^2 - \lambda_1^2 }{ \lambda_0 - \lambda_1}  - \frac{ \lambda_1^2 +  \lambda_2^2}{ \lambda_1  - \lambda_2} \right).
\end{split}
\end{equation}
Take some $q \in (0,1)$ fixed and let $k \in \mathbb{N}$.  Assume that $\lambda_0 = \lambda_0(k) = q^{ki}, \lambda_1 = q^{k(i+l)}, \lambda_2 = \lambda_2(k) = - q^{kl}$ for $i, l \in \mathbb{N}$ different natural numbers. By our definition zero is not included in $\mathbb{N}$, and therefore $\lambda_1$ is strictly smaller than both $\lambda_0$ and $\vert \lambda_2\vert$.

Again we see from~\eqref{Eqn=Secondexpand} that  
\begin{equation}\label{Eqn=LimitSecond}
\lim_{k \rightarrow \infty} f^{[2]}(q^{ki}, q^{k(i+l)},  -q^{kl})  =
\left\{
\begin{array}{ll}
1 & \textrm{if }  i< l,  \\
-1  & \textrm{if }  l < i. 
\end{array}
\right.
\end{equation}
Now for $i, j, l \in \mathbb{N}$, let
\[
\phi_k(i,j,l) = (q^{ki}, q^{k(i+l)}, -q^{kl}).
\]
  Let the diagonal projection $P$ and the  Schur multiplier $M^+$ be defined as in the proof of Theorem~\ref{Thm=LowerBound1}. 
    Then from the limit~\eqref{Eqn=LimitSecond}   and the fact that    as in the proof of Theorem~\ref{Thm=LowerBound1} we can show that $ M_{f^{[2]} \circ \phi_k}$ is bounded uniformly in $k$, we can  show that
\[
 M^+ (y  x)  =  \lim_k (1-P)( M_{f^{[2]} \circ \phi_k}( y ,x) ), \quad y, x \in S_{2p}(\mathbb{N}),
\]
  with convergence in the norm of $S_{p}(\mathbb{N})$. 
 
We recall from~\eqref{Eqn=TriangleEstimate} and by duality,  that there exist~$C, D >0$ such that for every $1 < p < \infty$ we have  
\begin{equation}\label{Eqn=TriangleEstimateSecond}
C  p p^\ast   < \Vert  M^+: S_{p}(\mathbb{N} )  \rightarrow S_{p}(\mathbb{N} )  \Vert < D p p^\ast.
\end{equation}
For any $\epsilon > 0$ and $1< p < \infty$  we can choose $z \in S_{p}(\mathbb{N} ) $ such that
\[
\Vert   M^+(z)  \Vert_{p} >  C p   (\Vert z \Vert_{p} - \epsilon).
\]
Write $z = y x$ with $y,x \in S_{2p}(\mathbb{N} ) $ such that $\Vert z \Vert_p = \Vert y \Vert_{2p} \Vert x \Vert_{2p}$. 
We now have the estimates
\[
\begin{split}
 C p^\ast (\Vert z \Vert_{p} - \epsilon) < & \Vert M^+(  y , x)   \Vert_p \\
 \leq  &
   \limsup_k \Vert (1-P)( M_{f^{[2]} \circ \phi_k}( x,  y )  ) \Vert_{p} \\
    \leq  &
   \limsup_k \Vert  M_{f^{[2]} \circ \phi_k}( x,  y )  \Vert_{p}.
   \end{split}
   \]
Then   by~\cite[Theorem 2.2]{CKV}, 
   \[
   \begin{split} 
 C p^\ast (\Vert z \Vert_{p} - \epsilon) < & \Vert M_{f^{[2]}}: S_{2p} \times S_{2p} \rightarrow S_p \Vert \Vert x  \Vert_{2p}\Vert y  \Vert_{2p} \ \\
    \leq &  \Vert M_{f^{[2]}}: S_{2p} \times S_{2p} \rightarrow S_p \Vert \Vert z  \Vert_{p}.  
    \end{split}
\]
Hence we have obtained 
\[
\Vert M_{f^{[2]}}: S_{2p} \times S_{2p} \rightarrow S_{p}  \Vert \gtrsim  p^\ast.
\]

\end{proof}

\begin{remark}\label{Rmk=Comparison}
We argue that our  result of Theorem~\ref{Thm=LowerBound1} is fundamentally better than the methods employed in~\cite{CLPST}. 
In principle, the method of proof in~\cite{CLPST} can be adjusted to  yield that $\Vert M_{f^{[2]}}: S_{2p} \times S_{2p} \rightarrow S_{p}  \Vert \gtrsim  pp^\ast$ for the same function $f$ as in Theorems~\ref{Thm=LowerBound1} and~\ref{Thm=LowerBound2}. Indeed, the idea of~\cite{CLPST} is to first prove the reduction inequality
\[
 \Vert M_{f^{[2]}}: S_{2p} \times S_{2p} \rightarrow S_{p}  \Vert  \geq \sup_{\lambda_1 \in \mathbb{R}} \Vert M_{f^{[2]}(\: \cdot\:, \lambda_1, \: \cdot\:)}: S_{p}  \rightarrow S_{p}  \Vert.
\]
The right hand side has order $O(p p^\ast)$, which can be seen from Theorem~\ref{thrm: CGPT} for instance. So the reduction of~\cite{CLPST} is not efficient enough to capture the optimal constants.
\end{remark}

\noindent {\it Conflict of interest and data availability statement.} On behalf of all authors, the corresponding author states that there is no conflict of interest. Data sharing is not applicable to this article as no datasets were generated or analysed during the current study.

\appendix
\section{Proof of Theorem~\ref{thrm: diplinio_statement_bilinear} following~\cite{DiPlinioMathAnn}}\label{Sect=AppendixConstants}
\subsection{Dyadic definitions and notations}\label{Sect=AppendixDefs}
We first give a brief overview over the dyadic notions used in the proof of Theorem~\ref{thrm: diplinio_statement_bilinear}. Unless noted otherwise, all definitions are from~\cite[Section 2.2]{DiPlinioMathAnn}. While the concepts introduced in this section are well-defined on $\mathbb{R}^d$, we restrict our discussion to $d=1$, as it simplifies the notation and is in fact the only relevant case to our special case of Theorem~\ref{thrm: diplinio_statement_bilinear}.

See e.g.\ \cite[Section 2.2]{DiPlinioMathAnn} for the definitions on~$\mathbb{R}^d$ for $d>1$.

\vspace{0.3cm}

\noindent {\bf Dyadic grids.}   
The \emph{standard dyadic grid} on $\mathbb{R}$ is defined as $$\mathcal{D}_0\;:=\; \{2^{-k}([0,1)+m)\mid k,m\in\mathbb{Z}\}.$$
Let~$\Omega=\{0,1\}^{\mathbb{Z}}$ and equip $\Omega$ with a probability measure such that its coordinates are independent and uniformly distributed on $\{0,1\}$. The \emph{random dyadic grid} on $\mathbb{R}$ associated with~${\omega=(\omega_k)_{k\in\mathbb{Z}}\in\Omega}$ is defined as\begin{align*}
        \mathcal{D}_{\omega}&\;:=\; \{Q+\omega\mid Q\in\mathcal{D}_0\},\\
        Q+\omega&\;:=\; Q+\sum_{\substack{k\in\mathbb{Z}\\2^{-k}<|Q|}}2^{-k}\omega_k,
    \end{align*} where $|Q|$ denotes the length of the cube $Q$  in $\mathbb{R}$. By a \emph{dyadic grid} $\mathcal{D}$ we refer to $\mathcal{D}=\mathcal{D}_{\omega}$ for some $\omega\in\Omega$. For $Q\in \mathcal{D}$, $\mathcal{D}$ dyadic grid, define $Q^{(k)}$ as the cube $R\in\mathcal{D}$ such that $Q\subset R$ and $2^k|Q|=|R|$. Further set $\mathrm{ch}_{\mathcal{D}}(Q):= \{Q'\in\mathcal{D}\mid Q'\subsetneq Q \text{ and there exists no } Q''\in\mathcal{D}\text{ such that } Q' \subsetneq Q'' \subsetneq Q\}.$ We refer to this set as the \emph{children} of $Q$ in $\mathcal{D}$. The index denoting the dyadic grid may be omitted.

\vspace{0.3cm}

\noindent {\bf Haar functions.}
Let $\mathcal{D}$ be a dyadic grid on $\mathbb{R}$ and let $Q\in\mathcal{D}$. Let $Q_{\mathrm{left}}$ (resp.\ $Q_{\mathrm{right}}$) denote the left (resp.\ right) half of $Q$. For $\eta\in\{0,1\}$, we define the \emph{Haar function} \begin{equation*}
    h^{\eta}_Q\;:=\;\begin{cases}
        |Q|^{-1/2}1_{Q}, & \eta=0,\\
        |Q|^{-1/2}(1_{Q_{\mathrm{left}}}-1_{Q_{\mathrm{right}}}), & \eta=1.
    \end{cases}
\end{equation*} To simplify the notation, we set $h_Q\;:=\;h_Q^{1}$. Note that $\int_{\mathbb{R}}h_Q(x)dx=0$, hence we refer to $h_Q$ as a \emph{cancellative Haar function}. Furthermore, note that for any $Q_0\in\mathcal{D}$, an orthonormal basis of~$L^2(Q_0)$ is given by the family $\{h^0_{Q_0}\}\cup\{h_Q\mid Q\subseteq Q_0\text{ dyadic cube}\}$. 

From the Haar functions we construct the \emph{dyadic martingale difference} of a locally integrable function $f$ as $(D_Q f)_Q$, where
\begin{equation*}
        D_Qf= \langle f\rangle_{Q_{\mathrm{left}}}1_{Q_{\mathrm{left}}} + \langle f\rangle_{Q_{\mathrm{right}}}1_{Q_{\mathrm{right}}} - \langle f\rangle_{Q}1_{Q} = \langle f, h_Q\rangle h_Q,
    \end{equation*}  $\langle f \rangle_Q$ denotes the average of $f$ over a region $Q$, and $\langle f, g\rangle:= \int_{\mathbb{R}} f(x)g(x)dx$. Further define $$\Delta_Q^lf:=\sum_{\substack{R\in\mathcal{D}\\R^{(l)}=Q}}D_Rf=\sum_{\substack{R\in\mathcal{D}\\R^{(l)}=Q}}\sum_{R'\in\mathrm{ch}_{\mathcal{D}}(R)}(\langle f\rangle_{R'}-\langle f\rangle_{R})1_{R'}.$$
    
\noindent {\bf Shifts, paraproducts, and representation of Calder\'on-Zygmund Operators.}
The proof of Theorem~\ref{thrm: diplinio_statement_bilinear} heavily relies on a dyadic representation theorem for Calder\'on-Zygmund operators, see~\cite{bilinrepthrm}. For the convenience of the reader, we repeat the relevant definitions here; see~\cite{DiPlinioMathAnn} for the general $n$-linear case. 

Let $X$ be a Banach space and $\mathcal{D}$ a dyadic grid on $\mathbb{R}$. A \emph{bilinear dyadic shift} $S_{\mathcal{D}}^k$ of complexity $k=(k_1,k_2,k_3)\in\mathbb{N}^{3}_0$ is defined on ${f,g\in L^{\infty}_c(\mathbb{R},X)}$ as\begin{align}
        &S_{\mathcal{D}}^{k}(f,g):=\sum_{Q\in\mathcal{D}}A_Q^{k}(f,g),\label{eqn=def_shift_1}\\
        &A_Q^k(f,g):=\sum_{\substack{I_1,I_2,I_{3}\subseteq Q \\ |I_j|=2^{-k_j}|Q|}} \alpha_{I_1,I_2,I_{3},Q}\langle f,\Tilde{h}_{I_1}\rangle\langle g,\Tilde{h}_{I_2}\rangle \Tilde{h}_{I_{3}},\label{eqn=def_shift_2}
    \end{align}
    where exactly one of $\Tilde{h}_{I_{1}},\Tilde{h}_{I_{2}},\Tilde{h}_{I_{3}}$ is a non-cancellative Haar function and the other two are cancellative Haar functions. The index corresponding to the cancellative Haar function is denoted by~$j_0$. Furthermore,  the coefficients $\alpha_{I_1,I_2,I_3,Q} \in \mathbb{C}$ must satisfy \begin{equation}\label{eqn=shiftscalars}
        |\alpha_{I_1,I_2,I_3,Q}|\le \frac{1}{|Q|^{2}} \prod_{j=1}^{3}|I_j|^{1/2}.
    \end{equation}

    A \emph{bilinear paraproduct} is defined on $f,g\in L^{\infty}_c(\mathbb{R})$ as \begin{align*}
        \pi_{\mathcal{D}}(f,g):= \sum_{Q\in\mathcal{D}}a_Q\langle f,\Tilde{h}_{1,Q}\rangle\langle g,\Tilde{h}_{2,Q}\rangle\Tilde{h}_{3,Q},
    \end{align*}
    where $(\Tilde{h}_{1,Q},\Tilde{h}_{2,Q},\Tilde{h}_{3,Q})$ are such that there is exactly one $j_0\in\{1,2,3\}$ such that for all $Q\in\mathcal{D}$ we have  $\Tilde{h}_{j_0,Q}=h_{Q}$ and $\Tilde{h}_{j,Q}=1_{Q}/|Q|$ for all $j\ne j_0$. The scalar sequence  $(a_Q)_{Q\in\mathcal{D}}$ is such that \begin{align*}
            \sup_{Q_0\in\mathcal{D}}\left(\frac{1}{|Q_0|}\sum_{\substack{Q\in\mathcal{D}\\ Q\subset Q_0}}|a_Q|^2\right)^{1/2}\le 1.
        \end{align*}
 Note that we will usually suppress the dyadic grid from the notation and refer to dyadic shifts and paraproducts as $S^k$ and $\pi$, respectively.

Let $T$ be a bilinear Calder\'on-Zygmund operator and $f,g,h\in L^{\infty}_c(\mathbb{R})$. Then \begin{equation}\label{eqn: bilinear CZO decomp}
    \langle T(f,g),h\rangle = C_T \mathbb{E}_{\omega}\sum_{k\in\mathbb{N}_0^{3}}\sum_u2^{-\max_i k_i/2}\langle U_{\mathcal{D}_{\omega},u}^k(f,g),h\rangle,
\end{equation}where $C_T$ is a constant depending only on $T$, the sum over $u$ is finite, 
and $\mathcal{D}_{\omega}$ is a random dyadic grid. For $\max_j k_j>0$, $U_{\mathcal{D}_{\omega},u}^k$ denotes a bilinear dyadic shift of complexity $k$, whereas for~${\max_j k_j=0}$, $U_{\mathcal{D}_{\omega},u}^k$ denotes either a bilinear dyadic shift of complexity $0$ or a bilinear paraproduct.
Note that by Equation~(4.4) in~\cite{bilinrepthrm}, the paraproducts in this representation are constructed from a scalar sequence \begin{equation*}
    a_Q=C_{T}\langle T(1,1),h_Q\rangle,
\end{equation*}
hence $T(1,1)=0$ implies that the paraproducts in the representation of $T$ vanish. This applies in particular to the situation of Remark~\ref{rem: T(1)=0},   where we have\begin{equation}\label{eqn: bilinear CZO decomp T1}
     \langle T(f,g),h\rangle = C_T \mathbb{E}_{\omega}\sum_{k\in\mathbb{N}_0^{3}}\sum_u2^{-\max_i k_i/2}\langle S_{\mathcal{D}_{\omega},u}^k(f,g),h\rangle.
\end{equation}
\subsection{Relevant inequalities}
Before presenting the proof of Theorem~\ref{thrm: diplinio_statement_bilinear}, we first list the estimates that will be used, alongside the constants they introduce. 

Following~\cite{DiPlinioMathAnn}, it is sufficient to consider the following special case of the decoupling estimate~\cite[Theorem 6]{hanninen_operator-valued_2016}.

\begin{theorem}[Decoupling Inequality~{\cite[Theorem 6]{hanninen_operator-valued_2016}}]\label{thrm: decoupling_ineq}
    Let $p\in(1,\infty)$, let $X$ be a UMD space with UMD constant $\beta_{p,X}$, and let $\mathcal{D}$ be a dyadic grid. Further define the following:\begin{itemize}
        \item $\mathcal{D}_{j,k}:=\{Q\in\mathcal{D}\mid |Q|=2^{m(k+1)+j}\text{ for some }m\in\mathbb{Z}\}$ for $j,k\in\mathbb{Z}$ fixed,
    \item the probability space $\mathcal{V}_Q:=(Q,\mathrm{Leb}(Q),\lambda_Q)$, where $\mathrm{Leb}(Q)$ denotes the Lebesgue measurable subsets of $Q$ and $\lambda_Q$ the normalised restriction of the Lebesgue measure to $Q$,
    \item the product probability space $\mathcal{V}:=\prod_{Q\in\mathcal{D}}\mathcal{V}_Q$ with measure $\nu$ and elements $y=(y_Q)_{Q\in\mathcal{D}}$.
    \end{itemize} Let $(\varepsilon_Q)_{Q\in\mathcal{D}}$ be a Rademacher sequence. Let $(f_Q)_{Q\in\mathcal{D}}$ be a sequence of functions $\mathbb{R} \rightarrow X$ such that for all $Q\in\mathcal{D}$, $f_Q$ is 1) supported on $Q$, 2) constant on every $Q'\in\mathrm{ch}_{\mathcal{D}}(Q)$, and 3) $\langle f_Q\rangle_Q = 0$ holds. Then \begin{multline}\label{eqn: decoupling_ineq}
        \frac{1}{\beta_{p,X}^p}\mathbb{E}\int_{\mathbb{R}} \int_{\mathcal{V}}\|\sum_{Q\in\mathcal{D}_{j,k}}\varepsilon_Q1_Q(x) f_Q(y_Q)\|_{X}^pd\nu(y)dx \\
        \le \int_{\mathbb{R}} \|\sum_{Q\in\mathcal{D}_{j,k}}f_Q(x)\|_{X}^pdx \le \beta_{p,X}^p \mathbb{E}\int_{\mathbb{R}} \int_{\mathcal{V}}\|\sum_{Q\in\mathcal{D}_{j,k}}\varepsilon_Q1_Q(x) f_Q(y_Q)\|_{X}^pd\nu(y)dx.
    \end{multline}
    This inequality also holds when replacing $\mathcal{D}_{j,k}$ with $\mathcal{D}$.
\end{theorem}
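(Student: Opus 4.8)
The plan is to deduce Theorem~\ref{thrm: decoupling_ineq} from the operator-valued dyadic decoupling theorem of H\"anninen and Hyt\"onen~\cite[Theorem 6]{hanninen_operator-valued_2016}, of which it is the $X$-valued scalar-kernel special case; I also indicate the direct route, since it makes transparent why only a single power of $\beta_{p,X}$ is needed. The starting point is the structural observation that, for fixed $j,k$, the cubes of $\mathcal{D}_{j,k}$ of a common length $2^{m(k+1)+j}$ are pairwise disjoint, and the three hypotheses on $f_Q$ — supported on $Q$, constant on $\mathrm{ch}_{\mathcal{D}}(Q)$, and $\langle f_Q\rangle_Q=0$ — say precisely that $f_Q$ is a single martingale difference for the one-step filtration from $\sigma(Q)$ to $\sigma(\mathrm{ch}_{\mathcal{D}}(Q))$. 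Hence, writing $F:=\sum_{Q\in\mathcal{D}_{j,k}}f_Q$ and $d_m:=\sum_{|Q|=2^{m(k+1)+j}}f_Q$, one checks that $d_m$ is $\mathcal{F}_{m-1}$-measurable and $\mathbb{E}[d_m\mid\mathcal{F}_m]=0$, where $\mathcal{F}_m:=\sigma(\{Q\in\mathcal{D}:|Q|=2^{m(k+1)+j}\})$, so $d_m$ is the $m$-th martingale difference of $F$ and $F=\sum_m d_m$ is a finite martingale-difference sum (the $f_Q$ being compactly supported, only finitely many are nonzero).

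First I would reduce the per-cube decoupled quantity on the left of~\eqref{eqn: decoupling_ineq} to a per-generation one. For fixed $x$ and fixed $m$, exactly one cube $Q=Q_m(x)$ of length $2^{m(k+1)+j}$ contains $x$, so $\sum_{|Q|=2^{m(k+1)+j}}\varepsilon_Q 1_Q(x) f_Q(y_Q)=\varepsilon_{Q_m(x)}f_{Q_m(x)}(y_{Q_m(x)})$; the Rademacher signs within a generation are immaterial because the supports are disjoint, and conditionally on $x$ the only remaining randomness is a Rademacher sequence and a sequence of independent uniform samples, one per generation. Thus the left-hand decoupled integral equals $\mathbb{E}\int_{\mathbb{R}}\mathbb{E}_y\|\sum_m \varepsilon_m e_m(x,y_{Q_m(x)})\|_X^p\,dx$ with $e_m(x,\cdot):=1_{Q_m(x)}(x)f_{Q_m(x)}(\cdot)$ a conditionally independent (tangent) copy of $d_m$, and the right-hand side of~\eqref{eqn: decoupling_ineq} is treated identically. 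So the asserted two-sided estimate becomes exactly the statement that in a UMD space the $L^p(X)$-norm of the finite martingale-difference sum $\sum_m d_m$ is comparable, up to the UMD constant, to the $L^p$-norm of the randomized decoupled sum $\sum_m\varepsilon_m e_m$.

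Next I would assemble this two-sided comparison from the two classical ingredients, for both of which I may invoke~\cite{aibs_vol_1}: the decoupling inequality for tangent martingale-difference sequences (McConnell, Hitczenko), giving $\|\sum_m d_m\|_{L^p(X)}\approx\|\sum_m e_m\|_{L^p(X)}$ in UMD, and the randomized unconditionality of martingale differences, giving $\|\sum_m e_m\|_{L^p(X)}\approx(\mathbb{E}\|\sum_m\varepsilon_m e_m\|_{L^p(X)}^p)^{1/p}$. Finally, the version with $\mathcal{D}_{j,k}$ replaced by $\mathcal{D}$ follows either by running the same argument with the full dyadic filtration $(\sigma(\{Q\in\mathcal{D}:|Q|=2^{-k}\}))_k$ in place of the $(k+1)$-coarsened one, or by summing the $\mathcal{D}_{j,k}$-estimates over the $k+1$ residue classes $j$ and using that the corresponding blocks of $F$ are again martingale differences for a common filtration.

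I expect the main obstacle to be constant tracking rather than the structural reduction: naively chaining the decoupling and unconditionality estimates spends two powers of $\beta_{p,X}$ per direction, whereas~\eqref{eqn: decoupling_ineq} allows only one, so one must use the decoupling result in the combined "randomized tangent" formulation of~\cite{hanninen_operator-valued_2016} — in which the passage from $\sum_m d_m$ to $\sum_m\varepsilon_m e_m$ costs a single $\beta_{p,X}$ — rather than as a composition of two separate UMD estimates, and keep careful account of which inequality actually incurs the factor. A secondary point requiring care is verifying that the independent copies realized on the product space $\mathcal{V}$ are genuinely independent of both the Rademacher sequence and the base variable $x$, so that the conditional-independence needed in the tangency hypothesis holds verbatim.
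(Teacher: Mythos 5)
Your proposal is correct and in essence coincides with what the paper does: the paper gives no independent proof of this statement but simply quotes it as the ($S_p$-relevant) special case of~\cite[Theorem 6]{hanninen_operator-valued_2016}, which is exactly your primary route, and your reduction of the hypotheses on $(f_Q)_Q$ to a martingale-difference/tangent-sequence statement for the $(k+1)$-coarsened dyadic filtration is the standard mechanism behind that theorem, including your (correct) observation that the single factor $\beta_{p,X}$ per direction forces one to use the combined randomized-tangent formulation rather than chaining decoupling with randomized unconditionality. The only point to drop is the alternative of ``summing the $\mathcal{D}_{j,k}$-estimates over the $k+1$ residue classes $j$'' for the full-$\mathcal{D}$ case, since the triangle inequality there would cost an extra factor $k+1$; keep instead your first option (run the same argument with the full dyadic filtration, the $\mathcal{D}_{j,k}$ case then following by coarsening).
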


\begin{theorem}[Kahane-Khintchine inequality, {\cite[Theorem 3.2.23]{aibs_vol_1}}]\label{thrm: kahane_khintchine_ineq}
    Let $(\varepsilon_n)_n$ be a Rademacher sequence on a probability space $\Omega$, and let $X$ be a Banach space. For~$p,q\in(0,\infty)$ there exists $\kappa_{p,q}<\infty$ such that for all $N\in\mathbb{N}$ and $x_1,\dotsc,x_N\in X$ we have $$\|\sum_{n=1}^N\varepsilon_nx_n\|_{L^p(\Omega,X)}\le \kappa_{p,q}\|\sum_{n=1}^N\varepsilon_nx_n\|_{L^q(\Omega,X)}.$$
\end{theorem}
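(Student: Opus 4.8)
The plan is to reduce the statement to a single moment comparison against the $L^{1}$-norm and to prove the latter by a classical symmetrisation argument. Write $S:=\sum_{n=1}^{N}\varepsilon_{n}x_{n}$ and, for $k\le N$, $S_{k}:=\sum_{n=1}^{k}\varepsilon_{n}x_{n}$. If $p\le q$, then H\"older's inequality on the probability space $\Omega$ gives $\|S\|_{L^{p}(\Omega,X)}\le\|S\|_{L^{q}(\Omega,X)}$, so $\kappa_{p,q}=1$ works and there is nothing to prove. For the remaining case $p>q$ it suffices to establish, for every finite $p$, an estimate
\[
\|S\|_{L^{p}(\Omega,X)}\;\le\;c_{p}\,\|S\|_{L^{1}(\Omega,X)}
\]
with $c_{p}$ independent of $N$ and of $(x_{n})_{n}$. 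Indeed, granting this: if $q\ge1$ one combines it with $\|S\|_{L^{1}}\le\|S\|_{L^{q}}$; and if $q<1$ one uses in addition the Lyapunov (log-convexity) inequality $\|S\|_{L^{1}}\le\|S\|_{L^{q}}^{1-\theta}\|S\|_{L^{2}}^{\theta}$, valid for the unique $\theta\in(0,1)$ with $1=\tfrac{1-\theta}{q}+\tfrac{\theta}{2}$, and then absorbs the factor $\|S\|_{L^{2}}\le c_{2}\|S\|_{L^{1}}$.

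The core is an exponential-type tail bound for $\|S\|_{X}$ in terms of $m:=\|S\|_{L^{1}(\Omega,X)}$, obtained by symmetrisation. Since the $\varepsilon_{n}x_{n}$ are independent and symmetric, P.\ L\'evy's maximal inequality gives $\mathbb{P}(\max_{k\le N}\|S_{k}\|_{X}>t)\le2\,\mathbb{P}(\|S\|_{X}>t)$; moreover $S_{k}=\mathbb{E}[S\mid\varepsilon_{1},\dots,\varepsilon_{k}]$, so by conditional Jensen $\mathbb{E}\|S_{k}\|_{X}\le m$ for all $k$, whence the deterministic bound $\|x_{n}\|_{X}=\|S_{n}-S_{n-1}\|_{X}\le\mathbb{E}\|S_{n}\|_{X}+\mathbb{E}\|S_{n-1}\|_{X}\le2m$. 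Feeding this into the Hoffmann--J\o rgensen inequality, which for sums of independent symmetric vectors reads $\mathbb{P}(\|S\|_{X}>3t)\le\mathbb{P}(\max_{n}\|x_{n}\|_{X}>t)+4\,\mathbb{P}(\|S\|_{X}>t)^{2}$, kills the first term for every $t\ge2m$, so $\mathbb{P}(\|S\|_{X}>3t)\le4\,\mathbb{P}(\|S\|_{X}>t)^{2}$ for $t\ge2m$. Starting from $\mathbb{P}(\|S\|_{X}>8m)\le\tfrac18$ (Markov) and iterating this recursion yields $\mathbb{P}(\|S\|_{X}>3^{k}\cdot8m)\le2^{-2^{k}}$ for every $k\ge0$, a tail decaying faster than any polynomial, with all constants absolute.

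Integrating this bound, $\|S\|_{L^{p}(\Omega,X)}^{p}=p\int_{0}^{\infty}t^{p-1}\mathbb{P}(\|S\|_{X}>t)\,dt\le c_{p}^{\,p}\,m^{\,p}$, which is precisely the desired comparison with the $L^{1}$-norm and hence, via the reductions of the first paragraph, the full statement; this is the argument underlying \cite[Theorem~3.2.23]{aibs_vol_1}. The main obstacle is the deviation estimate of the second paragraph, i.e.\ L\'evy's inequality together with the Hoffmann--J\o rgensen bootstrap; by contrast, the $p\le q$ case, the log-convexity reduction for $q<1$, Markov's inequality, and the final integration are all routine. In the scalar-valued setting one could instead deduce the (sharper, subgaussian) tail directly from hypercontractivity of the Rademacher averaging operator, but for a general Banach space $X$ the symmetrisation route above is the cleanest self-contained argument.
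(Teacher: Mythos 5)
The paper does not actually prove this statement --- it is imported verbatim from \cite[Theorem 3.2.23]{aibs_vol_1}, so there is no internal proof to compare with; judged on its own, your argument is correct and follows essentially the same classical Kahane route that underlies the cited proof: reduce everything to a comparison of $\|S\|_{L^p}$ with $\|S\|_{L^1}$, get that comparison from a distributional bootstrap (here Hoffmann--J\o rgensen together with the deterministic bound $\max_n\|x_n\|\le 2m$ obtained by conditional Jensen, $m=\|S\|_{L^1(\Omega,X)}$), and handle $q<1$ by log-convexity of $r\mapsto\|S\|_{L^r}$ plus the $L^2$--$L^1$ comparison. Three small points. First, L\'evy's maximal inequality is stated but never used; it is only needed inside the proof of the Hoffmann--J\o rgensen inequality you quote, so you can drop it. Second, the claim $\mathbb{P}(\|S\|_X>3^k\cdot 8m)\le 2^{-2^k}$ is true, but the naive induction with this hypothesis does not close (it produces an extra factor $4$ at each step); one should instead iterate $b_{k+1}\le b_k^2$ with $b_k:=4\,\mathbb{P}(\|S\|_X>3^k\cdot 8m)\le\tfrac12$, which yields $\mathbb{P}(\|S\|_X>3^k\cdot 8m)\le\tfrac14\,2^{-2^k}$ --- worth a line. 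Third, in this paper the theorem is used quantitatively: Remark~\ref{rem: kahane_khintchine_constant} extracts the explicit constant $\kappa_{p,q}=2^{1+1/q}e\left(1+2\tfrac{p}{q}\right)$, hence $\kappa_{2,q}\le 60$, from the proof in \cite{aibs_vol_1}. Your argument does give a $q$-uniform bound for $\kappa_{2,q}$, $q\ge 1$ (namely the absolute constant $c_2$ in $\|S\|_{L^2}\le c_2\|S\|_{L^1}$ coming from integrating your tail bound), which is all the appendix really needs since that constant is absorbed into a $\lesssim$, but it does not reproduce the specific numerical value quoted there.
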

\begin{remark}\label{rem: kahane_khintchine_constant}
    Relevant in this section is the case $p=2$, $q>1$. Following the proof of Theorem~\ref{thrm: kahane_khintchine_ineq} in~\cite{aibs_vol_1}, the constant $\kappa_{p,q}$ is the same as in~\cite[Theorem 3.2.17]{aibs_vol_1} for $p,q\ge 1$, namely $\kappa_{p,q}=2^{1+1/q}e\left(1+2\frac{p}{q}\right)$.
    In particular, we thus have $\kappa_{2,q}\le 12(1+4/q)\le 60$ for all $q\ge 1$.
\end{remark}

The following theorem has been specialised to our dyadic setting.   Stein's inequality is originally due to Bourgain, and for the explicit constant we refer to the proof in \cite{FigielW} which is also contained in the monograph \cite{aibs_vol_1}.  
 
\begin{theorem}[Stein's inequality, Eqn.~(2.3) of \cite{DiPlinioMathAnn},  Theorem~4.2.23 of \cite{aibs_vol_1}, or Lemma~34 of \cite{FigielW}]\label{thrm: stein_ineq}
    Let $X$ be a UMD space with UMD constant~$\beta_{p,X}$  and let~$\mathcal{D}$ be a dyadic grid. Let~$(f_Q)_{ Q\in\mathcal{D}}$ be a sequence in $L^1_{\mathrm{loc}}(X)$ such that $\mathrm{supp}\; f_Q\subseteq Q$,  $Q\in\mathcal{D}$, and such that only finitely many of them are nonzero, and let $p\in(1,\infty)$. Then $$\mathbb{E}\|\sum_{Q\in\mathcal{D}}\varepsilon_Q\langle f_Q\rangle_Q1_Q\|_{L^p(\mathbb{R},X)}\le \beta_{p,X} \mathbb{E}\|\sum_{Q\in\mathcal{D}}\varepsilon_Qf_Q\|_{L^p(\mathbb{R},X)}.$$
\end{theorem}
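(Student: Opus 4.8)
The plan is to derive the inequality from the characterisation of UMD spaces by the boundedness of martingale transforms, realising the operator $f\mapsto\langle f\rangle_Q 1_Q$ as a piece of a martingale over a suitably interleaved filtration. We may assume $f_Q\in L^p(Q;X)$ for every $Q$ (otherwise the right-hand side is infinite and there is nothing to prove), and since only finitely many $f_Q$ are nonzero they are all supported in a bounded interval $R$; working on $R$ equipped with normalised Lebesgue measure, the overall normalisation factor cancels on the two sides, so we may pretend to work on a probability space. We will prove the estimate with $\|\cdot\|_{L^p(\Omega\times R;X)}$ on both sides, where $\Omega$ carries the Rademacher variables; this is the form of Stein's inequality in~\cite[Theorem~4.2.23]{aibs_vol_1} and is the form used in the sequel.

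First we reduce to the classical formulation. Let $\mathcal{F}_j$ denote the $\sigma$-algebra generated by the cubes of $\mathcal{D}$ of side $2^j$; then $\langle f_Q\rangle_Q 1_Q=\mathbb{E}(f_Q\,|\,\mathcal{F}_{j(Q)})$. Since the cubes of a fixed scale are pairwise disjoint, the Rademacher sequence factorises: if $(\delta_j)_j$ and $(\eta_Q)_Q$ are independent Rademacher sequences, then $(\eta_Q\delta_{j(Q)})_Q$ has the same distribution as $(\varepsilon_Q)_Q$. Writing $h_j:=\sum_{Q\in\mathcal{D},\,|Q|=2^j}\eta_Q f_Q$ and grouping by scale, both sides of the claim become $\eta$-averages of expressions of the form $\|\sum_j\delta_j(\,\cdots)\|$, so it suffices to prove, for an increasing filtration $(\mathcal{F}_n)_{n=1}^r$ (finitely many scales are involved, say $r$ of them), arbitrary $h_1,\dots,h_r\in L^p(R;X)$, and a Rademacher sequence $(\delta_n)$, that
\[
\Big\|\sum_{n=1}^r\delta_n\,\mathbb{E}(h_n\,|\,\mathcal{F}_n)\Big\|_{L^p(\Omega\times R;X)}\ \le\ \beta_{p,X}\,\Big\|\sum_{n=1}^r\delta_n h_n\Big\|_{L^p(\Omega\times R;X)}.
\]

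Now apply the interleaving trick (due to Bourgain). Put $\Phi:=\sum_n\delta_n h_n$ and $\Psi:=\sum_n\delta_n\mathbb{E}(h_n|\mathcal{F}_n)$, and on $\Omega\times R$ define the increasing filtration $\mathcal{G}_0:=\{\emptyset,\Omega\times R\}$, $\mathcal{G}_{2n-1}:=\sigma(\delta_1,\dots,\delta_{n-1})\otimes\mathcal{F}_n$, $\mathcal{G}_{2n}:=\sigma(\delta_1,\dots,\delta_n)\otimes\mathcal{F}_n$. Using that the $\delta_n$ are independent, centred, and independent of the $h_n$, one computes $\mathbb{E}(\Phi|\mathcal{G}_{2n})=\sum_{m\le n}\delta_m\mathbb{E}(h_m|\mathcal{F}_n)$ and $\mathbb{E}(\Phi|\mathcal{G}_{2n-1})=\sum_{m<n}\delta_m\mathbb{E}(h_m|\mathcal{F}_n)$; hence the martingale differences $d_k:=\mathbb{E}(\Phi|\mathcal{G}_k)-\mathbb{E}(\Phi|\mathcal{G}_{k-1})$ satisfy $d_{2n}=\delta_n\mathbb{E}(h_n|\mathcal{F}_n)$, while $\sum_{k=1}^{2r}d_k=\mathbb{E}(\Phi|\mathcal{F}_r)$. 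Therefore $\Psi=\sum_k\nu_k d_k$ with $\nu_k=1$ for $k$ even and $\nu_k=0$ for $k$ odd, i.e.\ $\Psi$ is a martingale transform of $(\mathbb{E}(\Phi|\mathcal{G}_k))_k$ with scalar multipliers in $[-1,1]$. Any such transform is, on finitely many indices, a convex combination of transforms with $\pm1$ multipliers (the extreme points of $[-1,1]^{2r}$), so by the very definition of the UMD constant one gets $\|\Psi\|_{L^p(\Omega\times R;X)}\le\beta_{p,X}\|\sum_k d_k\|_{L^p(\Omega\times R;X)}=\beta_{p,X}\|\mathbb{E}(\Phi|\mathcal{F}_r)\|_{L^p(\Omega\times R;X)}\le\beta_{p,X}\|\Phi\|_{L^p(\Omega\times R;X)}$, the last step because conditional expectation is an $L^p$-contraction. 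Averaging over $\eta$ and undoing the reductions yields the theorem.

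The only genuine work is the computation of the differences of the interleaved martingale — pure bookkeeping, sensitive only to the order in which one reveals a new dyadic scale versus a new Rademacher sign — together with the routine checks that $(\mathcal{G}_k)$ is a filtration and that the reduction to a bounded interval is harmless. I do not expect any real obstacle; if desired, one could instead simply invoke~\cite[Theorem~4.2.23]{aibs_vol_1} or~\cite[Lemma~34]{FigielW} verbatim, but tracking the constant $\beta_{p,X}$ (needed for the explicit bounds in this appendix) is transparent from the argument above.
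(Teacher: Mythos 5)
Your argument is correct, but there is nothing internal to compare it with: the paper does not prove Theorem~\ref{thrm: stein_ineq} at all, it imports it from the literature (Eqn.~(2.3) of \cite{DiPlinioMathAnn}, Theorem~4.2.23 of \cite{aibs_vol_1}, Lemma~34 of \cite{FigielW}). What you wrote is essentially the standard proof underlying those citations, namely Bourgain's interleaving argument: factor $\varepsilon_Q$ as $\eta_Q\delta_{j(Q)}$ to group the cubes by scale (legitimate, since multiplying independent Rademachers by independent signs preserves the joint law), use that $\langle f_Q\rangle_Q 1_Q=\mathbb{E}(f_Q\,|\,\mathcal{F}_{j(Q)})$ because $\mathrm{supp}\,f_Q\subseteq Q$, realise $\sum_n\delta_n\mathbb{E}(h_n|\mathcal{F}_n)$ as a martingale transform with multipliers in $\{0,1\}$ along the interleaved filtration $\mathcal{G}_{2n-1}=\sigma(\delta_1,\dots,\delta_{n-1})\otimes\mathcal{F}_n$, $\mathcal{G}_{2n}=\sigma(\delta_1,\dots,\delta_n)\otimes\mathcal{F}_n$, pass from $[-1,1]$-valued to $\pm1$-valued multipliers by convexity, and finish with the UMD inequality and the $L^p$-contractivity of conditional expectation. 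Your martingale-difference computation ($d_{2n}=\delta_n\mathbb{E}(h_n|\mathcal{F}_n)$, $\sum_k d_k=\mathbb{E}(\Phi|\mathcal{G}_{2r})$) is right, the reduction to a bounded set with normalised measure is harmless since any cube fully contained in $R$ remains an atom of the trace $\sigma$-algebras, and the constant comes out as exactly $\beta_{p,X}$, which is what this appendix needs.

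One bookkeeping remark: you prove the inequality in the norm of $L^p(\Omega\times\mathbb{R};X)$ (i.e.\ with $p$-th moments in the Rademacher variable), while the statement as printed carries the first moment $\mathbb{E}\Vert\cdot\Vert_{L^p(\mathbb{R},X)}$. Your form is the one stated in \cite[Theorem 4.2.23]{aibs_vol_1} and the one actually invoked in Case~2 of the proof of Theorem~\ref{thrm: bilin_shift_bdd}, so nothing downstream is affected; but be aware that the literal first-moment formulation with the same constant does not follow automatically from your argument (Kahane--Khintchine would perturb the constant), so the theorem should be read in the $p$-th moment form you establish.
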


\begin{theorem}[Kahane contraction principle, {\cite[Proposition 3.2.10]{aibs_vol_1}}]\label{thrm: kahane_contraction_principle}
    Let $(\varepsilon_n)_n$ be a Rademacher sequence on a probability space $\Omega$, $(a_n)_n$ a finite scalar sequence, and $(x_n)_n$ a finite sequence in a Banach space~$X$. Let $1\le p\le\infty$. Then $$\|\sum_{n=1}^Na_n\varepsilon_nx_n\|_{L^p(\Omega;X)}\le\max_{n}|a_n|\|\sum_{n=1}^N\varepsilon_nx_n\|_{L^p(\Omega;X)}.$$ 
\end{theorem}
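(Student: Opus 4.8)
The plan is to prove the principle from two structural facts only — linearity in the coefficients and invariance of the $L^p(\Omega;X)$-norm under flipping the sign of a single $\varepsilon_n$ — so no probabilistic estimates are needed. Since only $\varepsilon_1,\dots,\varepsilon_N$ occur, I would regard every object below as a function of the finite collection $(\varepsilon_1,\dots,\varepsilon_N)$ of independent symmetric $\pm1$-valued variables; this removes any measure-theoretic concern and makes the argument uniform in $p\in[1,\infty]$ (for $p=\infty$ the norm $\|\cdot\|_{L^\infty(\Omega;X)}$ is just the maximum of $\|\cdot\|_X$ over the finitely many sign patterns, so convexity and the sign symmetry apply verbatim).

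First I would introduce
\[
F(a):=\Bigl\|\sum_{n=1}^N a_n\varepsilon_n x_n\Bigr\|_{L^p(\Omega;X)},\qquad a=(a_1,\dots,a_N)\in\mathbb{R}^N,
\]
and note that $F$ is a seminorm on $\mathbb{R}^N$, being the composition of the norm on $L^p(\Omega;X)$ with the linear map $a\mapsto\sum_n a_n(\varepsilon_n x_n)$. In particular $F$ is convex and positively homogeneous, so it suffices to show $F(a)\le F(1,\dots,1)$ when $\max_n|a_n|\le1$, the general case following by rescaling (or being trivial if this maximum is $0$). The second ingredient is that $F$ is even in each coordinate separately: freezing all $a_m$ with $m\ne n$, the map $t\mapsto F(a_1,\dots,a_{n-1},t,a_{n+1},\dots,a_N)$ is even, because replacing $\varepsilon_n$ by $-\varepsilon_n$ leaves the joint law of $(\varepsilon_1,\dots,\varepsilon_N)$ unchanged — this is the one place where the structure of a Rademacher sequence (independence and symmetry) enters — while turning $-a_n\varepsilon_n$ into $a_n\varepsilon_n$.

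I would then invoke the elementary lemma that an even convex $G\colon\mathbb{R}\to\mathbb{R}$ satisfies $G(s)\le G(t)$ whenever $|s|\le|t|$: for $t\ne0$ write $s=\tfrac{t+s}{2t}\,t+\tfrac{t-s}{2t}\,(-t)$ as a convex combination of $t$ and $-t$ and combine convexity with $G(-t)=G(t)$. Applying this coordinate by coordinate (freezing all but one coordinate, which makes $F$ there even and convex) lets me replace each $a_n$ successively by $1$, since $|a_n|\le1$, giving
\[
F(a_1,\dots,a_N)\le F(1,a_2,\dots,a_N)\le\cdots\le F(1,\dots,1)=\Bigl\|\sum_{n=1}^N\varepsilon_n x_n\Bigr\|_{L^p(\Omega;X)}.
\]
Equivalently, once $\max_n|a_n|\le1$ one may write $a$ as a convex combination of the $2^N$ vertices of $[-1,1]^N$ and use convexity together with the coordinatewise sign symmetry in a single step; either route is routine. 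There is no real obstacle here: the result is elementary, and the only points deserving an explicit sentence are the justification that flipping one $\varepsilon_n$ preserves the $L^p(\Omega;X)$-norm and the trivial remark handling the case $p=\infty$.
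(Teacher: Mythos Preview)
The paper does not supply its own proof of this statement; it is simply quoted from \cite[Proposition~3.2.10]{aibs_vol_1} and used as a black box in Appendix~\ref{Sect=AppendixConstants}. Your argument is correct and is in fact the standard textbook proof (the one given in \cite{aibs_vol_1}): convexity of the seminorm $a\mapsto\|\sum_n a_n\varepsilon_n x_n\|_{L^p(\Omega;X)}$ together with the coordinatewise sign symmetry coming from the Rademacher distribution reduces the estimate to the extreme points of the cube $[-1,1]^N$, where it is an equality.
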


\subsection{Proof of Theorem~\ref{thrm: diplinio_statement_bilinear}}

We repeat the proof of Theorem~3.17 in~\cite{DiPlinioMathAnn}, specialised to the bilinear case for $d=1$ and $T(1,1)=0$ (in the sense of Remark~\ref{rem: T(1)=0}). By the representation theorem introduced in Section~\ref{Sect=AppendixDefs}, the proof reduces to the following theorem from~\cite[Section 4]{DiPlinioMathAnn}. 
\begin{theorem}\label{thrm: bilin_shift_bdd} Let $p,p_1,p_2\in(1,\infty)$ such that $1/p_1+1/p_2=1/p$. Set $p_3:=p^*$.
    Let $S^k$ be a  bilinear dyadic shift of complexity  $k=(k_1,k_2,k_3)\in\mathbb{N}^{3}_0$ and let $f_j\in L^{\infty}_c(\mathbb{R},S_{p_j})$, $j=1,2,3$. Define the associated trilinear form \begin{equation*}
        \Lambda_{S^k}(f_1,f_2,f_3)=\sum_{Q\in\mathcal{D}}\sum_{\substack{I_1,I_2,I_{3}\subseteq Q \\ |I_j|=2^{-k_j}|Q|}} \alpha_{I_1,I_2,I_{3},Q}\tau\left(\langle f_1,\Tilde{h}_{I_1}\rangle\langle f_2,\Tilde{h}_{I_2}\rangle \langle f_3, \Tilde{h}_{I_{3}}\rangle\right),
    \end{equation*} where $\tau$ denotes the   trace. It then holds that \begin{equation}\label{Eqn=TriForm}
        |\Lambda_{S^k}(f_1,f_2,f_3)| \lesssim C(p,p_1,p_2)\prod_{j=1}^3\|f_j\|_{L^{p_j}(\mathbb{R},S_{p_j})}.
    \end{equation} 
\end{theorem}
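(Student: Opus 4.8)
The plan is to estimate the trilinear form $\Lambda_{S^k}$ by a stopping-time (sparse domination) argument, following the scheme of~\cite{DiPlinioMathAnn} but carefully tracking the constants produced by the decoupling, Stein, Kahane--Khintchine and Kahane contraction inequalities recalled above. Fix a shift $S^k$ of complexity $k = (k_1,k_2,k_3)$, let $j_0$ be the index carrying the cancellative Haar function in the paraproduct-free part, and set $p_3 = p^\ast$. The first step is to rewrite
\[
\Lambda_{S^k}(f_1,f_2,f_3) = \sum_{Q \in \mathcal D} \sum_{\substack{I_1,I_2,I_3 \subseteq Q \\ |I_j| = 2^{-k_j}|Q|}} \alpha_{I_1,I_2,I_3,Q}\, \tau\!\big(\langle f_1, \widetilde h_{I_1}\rangle \langle f_2, \widetilde h_{I_2}\rangle \langle f_3, \widetilde h_{I_3}\rangle\big),
\]
and, using the normalisation~\eqref{eqn=shiftscalars}, bound the inner sum over $I_1,I_2,I_3 \subseteq Q$ by a product of $L^2$-averaged martingale quantities $\Delta_Q^{k_j} f_j$ (cancellative) and $\langle f_j \rangle_{Q^{(k_j)}}$-type averages (non-cancellative). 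This reduces matters to
\[
|\Lambda_{S^k}(f_1,f_2,f_3)| \lesssim \sum_{Q \in \mathcal D} |Q| \prod_{j=1}^3 \Big( \tfrac{1}{|Q|}\!\int_Q \|\widetilde\Delta_Q^{k_j} f_j\|_{S_{p_j}}^2 \Big)^{1/2},
\]
where $\widetilde\Delta$ is $\Delta^{k_j}$ in the $j_0$ slot and an averaging operator otherwise.

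**Key steps.**

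Next I would run the standard stopping-time decomposition: organise the cubes $Q$ according to stopping cubes for the three functions $f_j$ relative to their $S_{p_j}$-valued maximal functions, so that the resulting collection of principal cubes is sparse. On each stopping cube one controls the local averages by the corresponding stopped maximal function, and summing over the sparse family and applying Hölder in three variables with exponents $p_1, p_2, p_3$ reduces the bound to the $L^{p_j}(\mathbb R, S_{p_j})$-boundedness of a dyadic square function and of the Hardy--Littlewood maximal operator on $S_{p_j}$-valued functions. For the square-function estimate I would invoke the decoupling inequality (Theorem~\ref{thrm: decoupling_ineq}), which costs a factor $\beta_{p_j, S_{p_j}} = \beta_{p_j}$ (up to absolute constants, using that the UMD constant of $S_q$ is $\lesssim q q^\ast = \beta_q$), then Kahane--Khintchine to pass between $L^2$ and $L^{p_j}$ averages (a bounded factor by Remark~\ref{rem: kahane_khintchine_constant}), and Stein's inequality (Theorem~\ref{thrm: stein_ineq}) to handle the conditional expectations in the non-cancellative slots, costing another $\beta_{p_j}$. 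Tracking which slot is cancellative shows that one always picks up exactly three factors $\beta_{p_j}$ in the generic case, and that the complexity dependence is absorbed into $2^{-\max_i k_i/2}$ after summing. The asymmetry in~\eqref{Eqn=CFunction} — the $\min(\beta_{p_1}^2\beta_p, \beta_p^2\beta_{p_1})$ type terms — arises from the freedom to place the "hard" estimate on whichever of two slots gives the smaller constant, i.e.\ one may dualise and move the $S_{p^\ast}$ slot, which is exactly the permutation argument indicated at the end of Appendix~\ref{Sect=AppendixConstants}.

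**The main obstacle.**

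The main difficulty is not the structure of the argument, which is by now routine, but the bookkeeping of constants: one must verify that each application of decoupling, Stein, Kahane--Khintchine and the Kahane contraction principle contributes only the claimed power of $\beta_{p_j}$ and no hidden extra factor of $p_j$ or $p_j^\ast$, and that the three slots together produce precisely the combination in~\eqref{Eqn=CFunction} rather than, say, $\beta_p\beta_{p_1}\beta_{p_2}$ times an extra $\beta$. This is where the vanishing of the paraproducts (Remark~\ref{rem: T(1)=0}, giving~\eqref{eqn: bilinear CZO decomp T1}) is essential: including paraproducts would force the weaker constant $C'(p,p_1,p_2)$ of Remark~\ref{Rmk=ParaVanish}, with the extra John--Nirenberg factor $C_{\mathrm{BMO}_q}$. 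Finally one sums~\eqref{Eqn=TriForm} over the representation~\eqref{eqn: bilinear CZO decomp T1}, using the geometric decay $2^{-\max_i k_i/2}$ against the at-most-polynomial growth of the shift bounds in $k$, and concludes~\eqref{multiplier_on_R bound} with $C(p,p_1,p_2)$ as in~\eqref{Eqn=CFunction}; a density argument extends from $f_j \in L^\infty_c(\mathbb R, S_{p_j})$ to the full spaces $L^{p_j}(\mathbb R, S_{p_j})$.
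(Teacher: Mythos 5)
Your scheme breaks at the reduction step. After applying Schatten--H\"older to each trace and Cauchy--Schwarz over the cubes $I_j\subseteq Q$, you pass to
\[
|\Lambda_{S^k}(f_1,f_2,f_3)|\;\lesssim\;\sum_{Q\in\mathcal D}|Q|\prod_{j=1}^3\Big(\frac1{|Q|}\int_Q\big\|\widetilde\Delta_Q^{k_j}f_j\big\|_{S_{p_j}}^2\Big)^{1/2},
\]
i.e.\ you take norms slotwise at the level of individual Haar coefficients. Your stopping-time/sparse argument then needs the $L^{p_j}(\mathbb{R},S_{p_j})$-boundedness of the scalar-valued dyadic square function $f\mapsto\big(\sum_Q\|\Delta_Q f\|_{S_{p_j}}^2\big)^{1/2}$, and this is a genuine gap: for Banach-space-valued martingales this square function is dominated by $\|f\|_{L^{p}(X)}$ only when $X$ has martingale cotype $2$ (and the converse needs type $2$), which $S_{p_j}$ fails for general $p_j\in(1,\infty)$. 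It is not what the decoupling inequality of Theorem~\ref{thrm: decoupling_ineq} provides --- that is a statement about randomised sums $\sum_Q\varepsilon_Q 1_Q f_Q(y_Q)$ with the $S_{p}$-norm kept outside the sum, not about $\ell^2$-sums of norms. So the proposed route does not even yield boundedness, let alone the constant $C(p,p_1,p_2)$ of \eqref{Eqn=CFunction}: discarding the operator structure coefficientwise destroys exactly the UMD/randomisation structure on which the constant tracking rests, and this obstruction is precisely why the operator-valued theory of~\cite{DiPlinioMathAnn} is not a routine repetition of scalar sparse domination.

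The paper's proof never separates the three slots by norms. It first re-expands $S^k$ into the forms $\Lambda_{S^k_i}$, $0\le i\le\kappa$, arranged so that a non-cancellative Haar function can only occur with $l_j=0$; it then estimates the bilinear operator $S^k_i(f_1,f_2)$ as an $S_p$-valued function: decoupling (Theorem~\ref{thrm: decoupling_ineq}) is applied to the full sum over $K\in\mathcal D_{i,\kappa}$, the shift coefficients are repackaged into kernels $b_K$ with the pointwise bound $|b_K|\le1$ (so that the Kahane contraction principle applies), and the product of the two input slots inside the $\mathrm{Rad}(S_p)$-norm is split by the Rademacher product estimate \cite[Lemma 4.1]{DiPlinioMathAnn} --- this is the genuinely noncommutative ingredient your sketch omits and which cannot be replaced by slotwise H\"older. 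Kahane--Khintchine and a second application of decoupling then give $\beta_p\beta_{p_1}\beta_{p_2}$ in the all-cancellative case; Stein's inequality (Theorem~\ref{thrm: stein_ineq}) enters only when one slot is non-cancellative and produces the $\beta_p^2\beta_{p_j}$-type terms, and the $\min$'s in \eqref{Eqn=CFunction} come from cyclically permuting the trilinear form (using $\beta_{p^\ast,S_{p^\ast}}=\beta_{p,S_p}$), not from a choice inside a sparse bound. Any salvage of a stopping-time approach would require noncommutative maximal and square functions, whose constants behave differently and would not reproduce \eqref{Eqn=CFunction}.
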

\begin{proof}
    The trilinear form is first rewritten as
    \begin{align}
     \Lambda_{S^k}(f_1,f_2,f_3)&=  \sum_{i=0}^{\kappa} \Lambda_{S_i^k}(f_1,f_2,f_3), \\
     \Lambda_{S_i^k}(f_1,f_2,f_3) &= \sum_{K\in\mathcal{D}_{i,\kappa}}\sum_{\substack{L_1,L_2,L_3\in\mathcal{D}\\L_j^{(l_j)}=K}}b_{L_1,L_2,L_3,K}\tau\left(\prod_{j=1}^3\langle f_j,h'_{L_j}\rangle\right),\label{eqn: new_shift} \\
        b_{L_1,L_2,L_3,K}&=  \sum_{\substack{Q_1,Q_2,Q_3\in\mathcal{D}\\Q_j^{(k_j-l_j)}=L_j}}a_{Q_1,Q_2,Q_3,K}\prod_{j=1}^3\frac{|Q_j|^{1/2}}{|L_j|^{1/2}},
    \end{align}
 where $0\le l_j\le k_j$ and $\kappa=\max k_j$. This is a new shift operator with $h'_{L_j}\in\{h^0_{L_j},h_{L_j}\}$ such that there may be more than two indices $j$ such that their associated Haar functions are cancellative, whereas in~\eqref{eqn=def_shift_2}, the Haar functions are cancellative for exactly two indices. Furthermore, the construction is such that if $h'_{L_j}$ is not cancellative, then $l_j=0$. For details on how to construct this new shift, see~\cite{DiPlinioMathAnn}. 
 
The proof now proceeds as follows. First, boundedness is shown in the case where all Haar functions $h'_{L_j}$ are cancellative. In the second case, where not all Haar functions are cancellative, the fact $h'_{L_j}=h^0_{L_j} \Rightarrow  l_j=0$ allows us to reduce the trilinear form (\ref{eqn: new_shift}) to a bilinear form with only cancellative Haar functions. For this new bilinear form, boundedness follows by the same proof method as in the first case.

\vspace{0.3cm}

\noindent {\bf Case 1.} Let $0\le i\le\kappa$ be such that all associated Haar functions in (\ref{eqn: new_shift}) are cancellative. Note that for $L_3^{(l_3)}\in\mathcal{D}_{i,\kappa}$, orthogonality of the Haar functions yields $$\sum_{K\in\mathcal{D}_{i,\kappa}}\Delta_K^{l_3}h_{L_3}  = \sum_{K\in\mathcal{D}_{i,\kappa}} \sum_{\substack{L\in\mathcal{D}\\L^{(l_3)}=K}}  D_L h_{L_3} =  \sum_{K\in\mathcal{D}_{i,\kappa}} \sum_{\substack{L\in\mathcal{D}\\L^{(l_3)}=K}} \langle h_{L_3}, h_L\rangle h_L =h_{L_3}.$$ Using the decoupling inequality from Theorem~\ref{thrm: decoupling_ineq}, we thus have
    \begin{multline*}
        \|S^k_i(f_1,f_2)\|_{L^p(\mathbb{R},S_p)} = \|\sum_{K\in\mathcal{D}_{i,\kappa}}\sum_{\substack{L_1,L_2,L_3\in\mathcal{D}\\L_j^{(l_j)}=K}}b_{L_1,L_2,L_3,K}\langle f_1,h_{L_1}\rangle\langle f_2,h_{L_2}\rangle h_{L_3}\|_{L^p(\mathbb{R},S_p)} \\ \le\beta_{p,S_p}(\mathbb{E}\int_{\mathbb{R}}\int_{\mathcal{V}}\|\sum_{K\in\mathcal{D}_{i,\kappa}}\varepsilon_K1_K(x)\sum_{\substack{L_1,L_2,L_3\in\mathcal{D}\\L_j^{(l_j)}=K}}b_{L_1,L_2,L_3,K}\prod_{j=1}^2\langle f_j,h_{L_j}\rangle h_{L_3}(y_K)\|^p_{S_p}d\nu(y)dx)^{1/p}.
    \end{multline*} We can rewrite the inner sum in the integral by using $\langle f_j,h_{L_j}\rangle=\langle \Delta^{l_j}_Kf_j,h_{L_j}\rangle$. Indeed, \begin{align*}
         \langle \Delta^{l_j}_Kf_j,h_{L_j}\rangle = \sum_{\substack{L\in\mathcal{D}\\L^{(l_j)}=K}} \langle D_Lf_j, h_{L_j}\rangle =  \sum_{\substack{L\in\mathcal{D}\\L^{(l_j)}=K}} \langle f_j, h_{L}\rangle \langle h_L, h_{L_j}\rangle = \langle f_j, h_{L_j}\rangle.
    \end{align*}

   Hence we can write
   
   \begin{align*}
& \sum_{\substack{L_1,L_2,L_3\in\mathcal{D}\\L_j^{(l_j)}=K}}b_{L_1,L_2,L_3,K}\prod_{j=1}^2\langle f_j,h_{L_j}\rangle h_{L_3}(y_K) \\
&=\sum_{\substack{L_1,L_2,L_3\in\mathcal{D}\\L_j^{(l_j)}=K}}b_{L_1,L_2,L_3,K}\prod_{j=1}^2\langle \Delta^{l_j}_Kf_j,h_{L_j}\rangle h_{L_3}(y_K) \\
        &\quad= \int_{K^2}\sum_{\substack{L_1,L_2,L_3\in\mathcal{D}\\L_j^{(l_j)}=K}}b_{L_1,L_2,L_3,K}\prod_{j=1}^2\Delta^{l_j}_Kf_j(z_j)h_{L_j}(z_j) h_{L_3}(y_K)dz.
    \end{align*}
    By setting  \begin{equation*}
        b_K(y_K,z)=|K|^2\sum_{\substack{L_1,L_2,L_3\in\mathcal{D}\\L_j^{(l_j)}=K}}b_{L_1,L_2,L_3,K}\prod_{j=1}^2h_{L_j}(z_{j}) h_{L_3}(y_K),
    \end{equation*}
    we have \begin{multline*}
        \int_{K^2}\sum_{\substack{L_1,L_2,L_3\in\mathcal{D}\\L_j^{(l_j)}=K}}b_{L_1,L_2,L_3,K}\prod_{j=1}^2\Delta^{l_j}_Kf_j(z_j)h_{L_j}(z_j) h_{L_3}(y_K)dz
        \\
        \quad=\frac{1}{|K|^2}\int_{K^2}b_K(y_K,z)\prod_{j=1}^2\Delta^{l_j}_Kf_j(z_j)dz=\int_{\mathcal{V}^2}b_K(y_K,z_K)\prod_{j=1}^2\Delta^{l_j}_Kf_j(z_{j,K})d\nu(z),
    \end{multline*}
    where $\mathcal{V}$ and $\nu$ are as defined in Theorem~\ref{thrm: decoupling_ineq}. We can use the triangle inequality and as  $\mathcal{V}^2$ is a probability space then apply Jensen's inequality to show \begin{align*}
         &\quad\|S^k_i(f_1,f_2)\|_{L^p(\mathbb{R},S_p)} \\ 
         &\le\beta_{p,S_p}\left(\mathbb{E}\int_{\mathbb{R}}\int_{\mathcal{V}}\|\int_{\mathcal{V}^2}\sum_{K\in\mathcal{D}_{i,\kappa}}\varepsilon_K1_K(x)b_K(y_K,z_K)\prod_{j=1}^2\Delta^{l_j}_Kf_j(z_{j,K})d\nu(z)\|^p_{S_p}d\nu(y)dx\right)^{1/p} \\
         &\le\beta_{p,S_p}\left(\mathbb{E}\int_{\mathbb{R}}\int_{\mathcal{V}}\left(\int_{\mathcal{V}^2}\|\sum_{K\in\mathcal{D}_{i,\kappa}}\varepsilon_K1_K(x)b_K(y_K,z_K)\prod_{j=1}^2\Delta^{l_j}_Kf_j(z_{j,K})\|_{S_p}d\nu(z)\right)^pd\nu(y)dx\right)^{1/p} \\
         &\le\beta_{p,S_p}\left(\mathbb{E}\int_{\mathbb{R}}\int_{\mathcal{V}}\int_{\mathcal{V}^2}\|\sum_{K\in\mathcal{D}_{i,\kappa}}\varepsilon_K1_K(x)b_K(y_K,z_K)\prod_{j=1}^2\Delta^{l_j}_Kf_j(z_{j,K})\|^p_{S_p}d\nu(z)d\nu(y)dx\right)^{1/p}.
    \end{align*} Note that by construction, $|b_K(y_K,z_K)|\le 1$. Indeed, by unfolding definitions and applying estimate~\eqref{eqn=shiftscalars} we have  \begin{align*}
        |b_K(y_K,z_K)|&\le|K|^2\sum_{\substack{L_1,L_2,L_3\in\mathcal{D}\\L_j^{(l_j)}=K}}|b_{L_1,L_2,L_3,K}|\prod_{j=1}^2|h_{L_j}(z_{j,K})| |h_{L_3}(y_K)| \\
        &\le |K|^2\sum_{\substack{L_1,L_2,L_3\in\mathcal{D}\\L_j^{(l_j)}=K}}\sum_{\substack{Q_1,Q_2,Q_3\in\mathcal{D}\\Q_j^{(k_j-l_j)}=L_j}}|a_{Q_1,Q_2,Q_3,K}|\prod_{j=1}^3\frac{|Q_j|^{1/2}}{|L_j|^{1/2}}\frac{1_{L_1}(z_{1,K})}{|L_1|^{1/2}}\frac{1_{L_2}(z_{2,K})}{|L_2|^{1/2}} \frac{1_{L_3}(y_{K})}{|L_3|^{1/2}}  \\
        &\le \sum_{\substack{L_1,L_2,L_3\in\mathcal{D}\\L_j^{(l_j)}=K}}\sum_{\substack{Q_1,Q_2,Q_3\in\mathcal{D}\\Q_j^{(k_j-l_j)}=L_j}}\prod_{l=1}^3|Q_l|^{1/2}\prod_{j=1}^3\frac{|Q_j|^{1/2}}{|L_j|}1_{L_1}(z_{1,K})1_{L_2}(z_{2,K})1_{L_3}(y_K).
    \end{align*} Since the size of $|Q_j|$ relative to $|L_j|$ is fixed, we can rewrite this expression as
    \begin{multline*}
    \sum_{\substack{L_1,L_2,L_3\in\mathcal{D}\\L_j^{(l_j)}=K}}\sum_{\substack{Q_1,Q_2,Q_3\in\mathcal{D}\\Q_j^{(k_j-l_j)}=L_j}}\prod_{l=1}^3|Q_l|^{1/2}\prod_{j=1}^3\frac{|Q_j|^{1/2}}{|L_j|}1_{L_1}(z_{1,K})1_{L_2}(z_{2,K})1_{L_3}(y_K).\\
        =\sum_{\substack{L_1,L_2,L_3\in\mathcal{D}\\L_j^{(l_j)}=K}}\sum_{\substack{Q_1,Q_2,Q_3\in\mathcal{D}\\Q_j^{(k_j-l_j)}=L_j}}\prod_{j=1}^32^{l_j-k_j}1_{L_1}(z_{1,K})1_{L_2}(z_{2,K})1_{L_3}(y_K).
    \end{multline*}
    Finally, we use $$\sum_{\substack{Q_1,Q_2,Q_3\in\mathcal{D}\\Q_j^{(k_j-l_j)}=L_j}} 1= \prod_{j=1}^32^{k_j-l_j}$$ and the disjointness of the children of $K$ to conclude
    \begin{align*}
        &\sum_{\substack{L_1,L_2,L_3\in\mathcal{D}\\L_j^{(l_j)}=K}}\sum_{\substack{Q_1,Q_2,Q_3\in\mathcal{D}\\Q_j^{(k_j-l_j)}=L_j}}\prod_{j=1}^32^{l_j-k_j}1_{L_1}(z_{1,K})1_{L_2}(z_{2,K})1_{L_3}(y_K)
    \\
        &=\sum_{\substack{L_1,L_2,L_3\in\mathcal{D}\\L_j^{(l_j)}=K}}\prod_{j=1}^32^{k_j-l_j}2^{l_j-k_j}1_{L_1}(z_{1,K})1_{L_2}(z_{2,K})1_{L_3}(y_K)\\
        &=\sum_{\substack{L_1,L_2,L_3\in\mathcal{D}\\L_j^{(l_j)}=K}}1_{L_1}(z_{1,K})1_{L_2}(z_{2,K})1_{L_3}(y_K)\\
        &=1_{K}(z_{1,K})1_{K}(z_{2,K})1_{K}(y_K)\\
        &\le 1.
    \end{align*} 
    Letting $\|(x_k)_{k=1}^K\|_{\mathrm{Rad}(S_{p})}:=\left(\mathbb{E}\|\sum_{k=1}^K\varepsilon_kx_k\|_{S_{p}}^2\right)^{1/2}$, we can now finish the proof of this case as follows. From the previous estimates and~\cite[Lemma 4.1]{DiPlinioMathAnn}, it follows that
    \begin{align*}
        &\|S^k_i(f_1,f_2)\|_{L^p(\mathbb{R},S_p)}    \\
       &{\displaystyle\le \beta_{p,S_p}\left(\mathbb{E}\int_{\mathbb{R}}\int_{\mathcal{V}}\int_{\mathcal{V}^2}\|\sum_{K\in\mathcal{D}_{i,\kappa}}\varepsilon_K1_K(x)b_K(y_K,z_K)\prod_{j=1}^2\Delta^{l_j}_Kf_j(z_{j,K})\|^p_{S_p}d\nu(z)d\nu(y)dx\right)^{1/p}}   \\
        &{\displaystyle\le \beta_{p,S_p}\left(\int_{\mathbb{R}}\int_{\mathcal{V}}\int_{\mathcal{V}^2}\prod_{j=1}^2\|(1_K(x)\Delta^{l_j}_Kf_j(z_{j,K}))_{K\in\mathcal{D}_{i,\kappa}}\|^p_{\mathrm{Rad}(S_{p_j})}d\nu(z)d\nu(y)dx\right)^{1/p}}.
    \end{align*}
    Using that $\mathcal{V}$ is a probability space and applying H\"older's inequality yields
    \begin{align*}
    &\beta_{p,S_p}\left(\int_{\mathbb{R}}\int_{\mathcal{V}}\int_{\mathcal{V}^2}\prod_{j=1}^2\|(1_K(x)\Delta^{l_j}_Kf_j(z_{j,K}))_{K\in\mathcal{D}_{i,\kappa}}\|^p_{\mathrm{Rad}(S_{p_j})}d\nu(z)d\nu(y)dx\right)^{1/p}
        \\
        &{\displaystyle= \beta_{p,S_p}\left(\int_{\mathbb{R}}\int_{\mathcal{V}^2}\prod_{j=1}^2\|(1_K(x)\Delta^{l_j}_Kf_j(z_{j,K}))_{K\in\mathcal{D}_{i,\kappa}}\|^p_{\mathrm{Rad}(S_{p_j})}d\nu(z)dx\right)^{1/p}} \\
        &{\displaystyle\le \beta_{p,S_p}\prod_{j=1}^2\left(\int_{\mathbb{R}}\int_{\mathcal{V}^2}\|(1_K(x)\Delta^{l_j}_Kf_j(z_{j,K}))_{K\in\mathcal{D}_{i,\kappa}}\|^{p_j}_{\mathrm{Rad}(S_{p_j})}d\nu(z)dx\right)^{1/p_j}}.
    \end{align*}
    By unfolding the definition of $\|\cdot\|_{\mathrm{Rad}}$, we can apply the Kahane-Khintchine equality to obtain
    \begin{align*}
    &\beta_{p,S_p}\prod_{j=1}^2\left(\int_{\mathbb{R}}\int_{\mathcal{V}^2}\|(1_K(x)\Delta^{l_j}_Kf_j(z_{j,K}))_{K\in\mathcal{D}_{i,\kappa}}\|^{p_j}_{\mathrm{Rad}(S_{p_j})}d\nu(z)dx\right)^{1/p_j}
                \\
        &{\displaystyle= \beta_{p,S_p}\prod_{j=1}^2\left(\int_{\mathbb{R}}\int_{\mathcal{V}^2}(\mathbb{E}\|\sum_{K\in\mathcal{D}_{i,\kappa}}\varepsilon_K1_K(x)\Delta^{l_j}_Kf_j(z_{j,K})\|_{S_{p_j}}^2)^{p_j/2}d\nu(z)dx\right)^{1/p_j}} \\
        &{\displaystyle\le \beta_{p,S_p}\prod_{j=1}^2\kappa_{2,p_j}\left(\int_{\mathbb{R}}\int_{\mathcal{V}^2}\mathbb{E}\|\sum_{K\in\mathcal{D}_{i,\kappa}}\varepsilon_K1_K(x)\Delta^{l_j}_Kf_j(z_{j,K})\|_{S_{p_j}}^{p_j}d\nu(z)dx\right)^{1/p_j}}.
    \end{align*}
    Finally, Fubini's theorem and the decoupling estimate yield
    \begin{align*}
        &\beta_{p,S_p}\prod_{j=1}^2\kappa_{2,p_j}\left(\int_{\mathbb{R}}\int_{\mathcal{V}^2}\mathbb{E}\|\sum_{K\in\mathcal{D}_{i,\kappa}}\varepsilon_K1_K(x)\Delta^{l_j}_Kf_j(z_{j,K})\|_{S_{p_j}}^{p_j}d\nu(z)dx\right)^{1/p_j}
    \\
        &{\displaystyle=\beta_{p,S_p}\prod_{j=1}^2\kappa_{2,p_j}\left(\mathbb{E}\int_{\mathbb{R}}\int_{\mathcal{V}^2}\|\sum_{K\in\mathcal{D}_{i,\kappa}}\varepsilon_K1_K(x)\Delta^{l_j}_Kf_j(z_{j,K})\|_{S_{p_j}}^{p_j}d\nu(z)dx\right)^{1/p_j}} \\
        &{\displaystyle\le \beta_{p,S_p}\prod_{j=1}^2\kappa_{2,p_j}\beta_{p_j,S_{p_j}}\|f_j\|_{L^{p_j}(\mathbb{R},S_{p_j})}},
    \end{align*}
    concluding the proof of Case 1. Altogether, this case yields the estimate \begin{equation*}
        \|S^k_i(f_1,f_2)\|_{L^p(\mathbb{R},S_p)} \lesssim  \beta_{p,S_p}\prod_{j=1}^2\kappa_{2,p_j}\beta_{p_j,S_{p_j}}.
    \end{equation*}
\vspace{0.3cm}

\noindent {\bf Case 2.} Let $0\le i\le\kappa$ be such that one Haar function in (\ref{eqn: new_shift}) is not cancellative. We assume that $h'_{L_2}=h^0_{L_2}$ and $h'_{L_j}=h_{L_j}$, $j=1,3$; the estimates for the other cases follow in the same manner. Note that (\ref{eqn: new_shift}) has been constructed such that this implies $l_2=0$, hence $L_2=K$; see~\cite{DiPlinioMathAnn} for details. We use the decoupling estimate (Theorem~\ref{thrm: decoupling_ineq}) to estimate\begin{align*}
    \|S^k_i(f_1,f_2)\|_{L^p(\mathbb{R},S_p)}&= \|\sum_{K\in\mathcal{D}_{i,\kappa}}\sum_{\substack{L_1,L_3\in\mathcal{D}\\L_j^{(l_j)}=K}}b_{L_1,L_3,K}\langle f_1,h_{L_1}\rangle|K|^{1/2}\langle f_2\rangle_Kh_{L_3}\|_{L^p(\mathbb{R},S_p)} \\
    &\le\beta_{p,S_p}\left(\mathbb{E}\int_{\mathbb{R}}\int_{\mathcal{V}}\|\sum_{K\in\mathcal{D}_{i,\kappa}}\varepsilon_K1_K(x)\langle\varphi_{K,y}\rangle_K\|_{S_p}^pd\nu(y)dx\right)^{1/p},
\end{align*}
where the function $\varphi_{K,y}:\mathbb{R}\to S_p$ is defined as\begin{equation*}
    \varphi_{K,y}(x)\;:=\;|K|^{1/2}\sum_{\substack{L_1,L_3\in\mathcal{D}\\L_j^{(l_j)}=K}}b_{L_1,L_3,K}\langle f_1,h_{L_1}\rangle f_2(x)h_{L_3}(y_K).
\end{equation*}
We can now apply Stein's inequality (Theorem~\ref{thrm: stein_ineq}) with respect to $x\in\mathbb{R}$ to obtain\begin{multline*}
   \beta_{p,S_p}\left(\mathbb{E}\int_{\mathbb{R}}\int_{\mathcal{V}}\|\sum_{K\in\mathcal{D}_{i,\kappa}}\varepsilon_K1_K(x)\langle\varphi_{K,y}\rangle_K\|_{S_p}^pd\nu(y)dx\right)^{1/p} \\
    \le \beta^2_{p,S_p}\left(\mathbb{E}\int_{\mathbb{R}}\int_{\mathcal{V}}\|\sum_{K\in\mathcal{D}_{i,\kappa}}\varepsilon_K1_K(x)\varphi_{K,y}(x)\|_{S_p}^pd\nu(y)dx\right)^{1/p}.
\end{multline*}
By H\"older's inequality we can further estimate\begin{multline*}
    \left(\mathbb{E}\int_{\mathbb{R}}\int_{\mathcal{V}}\|\sum_{K\in\mathcal{D}_{i,\kappa}}\varepsilon_K1_K(x)\varphi_{K,y}(x)\|_{S_p}^pd\nu(y)dx\right)^{1/p}\\ 
     \le 
     (\mathbb{E}\int_{\mathbb{R}}\int_{\mathcal{V}}\|\sum_{K\in\mathcal{D}_{i,\kappa}}\varepsilon_K1_K(x)|K|^{1/2}\sum_{\substack{L_1,L_3\in\mathcal{D}\\L_j^{(l_j)}=K}}b_{L_1,L_3,K}\langle f_1,h_{L_1}\rangle h_{L_3}(y_K)\|_{S_{p_1}}^p\|f_2(x)\|_{S_{p_2}}^pd\nu(y)dx)^{1/p}
     \\
     \shoveleft{\le 
     (\mathbb{E}\int_{\mathbb{R}}\int_{\mathcal{V}}\|\sum_{K\in\mathcal{D}_{i,\kappa}}\varepsilon_K1_K(x)|K|^{1/2}\sum_{\substack{L_1,L_3\in\mathcal{D}\\L_j^{(l_j)}=K}}b_{L_1,L_3,K}\langle f_1,h_{L_1}\rangle h_{L_3}(y_K)\|_{S_{p_1}}^{p_1} d\nu(y)dx)^{1/p_1}}\\
     \times\|f_2\|_{L^{p_2}(\mathbb{R},S_{p_2})}.
\end{multline*}
We now proceed as in Case 1 to estimate the remaining term. We use \begin{align*}
    |K|^{1/2}\sum_{\substack{L_1,L_3\in\mathcal{D}\\L_j^{(l_j)}=K}}b_{L_1,L_3,K}\langle f_1,h_{L_1}\rangle h_{L_3}(y_K) &= \int_{\mathcal{V}}b_K(y_k,z_K)\Delta_K^{l_1}f_1(z_K)d\nu(z),\end{align*} where we define \begin{align*}
    b_K(y_k,z_K) &= |K|^{3/2}\sum_{\substack{L_1,L_3\in\mathcal{D}\\L_j^{(l_j)}=K}}b_{L_1,L_3,K}h_{L_1}(z)h_{L_3}(y_K),
\end{align*}
\noindent
and estimate the remaining integral as \begin{align*}
    &\Biggl(\mathbb{E}\int_{\mathbb{R}}\int_{\mathcal{V}}\|\sum_{K\in\mathcal{D}_{i,\kappa}}\varepsilon_K1_K(x)|K|^{1/2}\sum_{\substack{L_1,L_3\in\mathcal{D}\\L_j^{(l_j)}=K}}b_{L_1,L_3,K}\langle f_1,h_{L_1}\rangle h_{L_3}(y_K)\|_{S_{p_1}}^{p_1} d\nu(y)dx\Biggr)^{1/p_1}\\
    &\quad=\left(\mathbb{E}\int_{\mathbb{R}}\int_{\mathcal{V}}\|\sum_{K\in\mathcal{D}_{i,\kappa}}\varepsilon_K1_K(x)\int_{\mathcal{V}}b_K(y_k,z_K)\Delta_K^{l_1}f_1(z_K)d\nu(z)\|_{S_{p_1}}^{p_1} d\nu(y)dx\right)^{1/p_1} \\
    &\quad\le\left(\mathbb{E}\int_{\mathbb{R}}\int_{\mathcal{V}}\int_{\mathcal{V}}\|\sum_{K\in\mathcal{D}_{i,\kappa}}\varepsilon_K1_K(x)b_K(y_k,z_K)\Delta_K^{l_1}f_1(z_K)\|_{S_{p_1}}^{p_1}d\nu(z) d\nu(y)dx\right)^{1/p_1} 
\end{align*}
Using Fubini's theorem and the Kahane contraction principle (Theorem~\ref{thrm: kahane_contraction_principle}) we further have
\begin{align*}
    &\left(\mathbb{E}\int_{\mathbb{R}}\int_{\mathcal{V}}\int_{\mathcal{V}}\|\sum_{K\in\mathcal{D}_{i,\kappa}}\varepsilon_K1_K(x)b_K(y_k,z_K)\Delta_K^{l_1}f_1(z_K)\|_{S_{p_1}}^{p_1}d\nu(z) d\nu(y)dx\right)^{1/p_1} \\
    &\quad=\left(\int_{\mathbb{R}}\int_{\mathcal{V}}\int_{\mathcal{V}}\mathbb{E}\|\sum_{K\in\mathcal{D}_{i,\kappa}}\varepsilon_K1_K(x)b_K(y_k,z_K)\Delta_K^{l_1}f_1(z_K)\|_{S_{p_1}}^{p_1}d\nu(z) d\nu(y)dx\right)^{1/p_1} \\
    &\quad\le \left(\int_{\mathbb{R}}\int_{\mathcal{V}}\int_{\mathcal{V}}\max_{K\in\mathcal{D}_{i,\kappa}}|b_K(y_k,z_K)|\mathbb{E}\|\sum_{K\in\mathcal{D}_{i,\kappa}}\varepsilon_K1_K(x)\Delta_K^{l_1}f_1(z_K)\|_{S_{p_1}}^{p_1}d\nu(z) d\nu(y)dx\right)^{1/p_1}.
\end{align*}
As in Case 1, we have the pointwise estimate $|b_K(y_k,z_K)|\le 1$, since\begin{align*}
    |b_K(y_k,z_K)| &\le |K|^{3/2}\sum_{\substack{L_1,L_3\in\mathcal{D}\\L_j^{(l_j)}=K}}|b_{L_1,L_3,K}||h_{L_1}(z)||h_{L_3}(y_K)| \\
    &=|K|^{3/2}\sum_{\substack{L_1,L_3\in\mathcal{D}\\L_j^{(l_j)}=K}}|b_{L_1,L_3,K}|\frac{1_{L_1}(z)}{|L_1|^{1/2}}\frac{1_{L_3}(y_K)}{|L_3|^{1/2}} \\
    &\le|K|^{3/2}\sum_{\substack{L_1,L_3\in\mathcal{D}\\L_j^{(l_j)}=K}}\sum_{\substack{Q_1,Q_2,Q_3\in\mathcal{D}\\Q_j^{(k_j-l_j)}=L_j}}|a_{Q_1,Q_2,Q_3,K}|\prod_{j=1}^3\frac{|Q_j|^{1/2}}{|L_j|^{1/2}}\frac{1_{L_1}(z)}{|L_1|^{1/2}}\frac{1_{L_3}(y_K)}{|L_3|^{1/2}}\\
    &\le\sum_{\substack{L_1,L_3\in\mathcal{D}\\L_j^{(l_j)}=K}}\sum_{\substack{Q_1,Q_2,Q_3\in\mathcal{D}\\Q_j^{(k_j-l_j)}=L_j}}\prod_{j=1}^3\frac{|Q_j|}{|L_j|}{1_{L_1}(z)}{1_{L_3}(y_K)} \\
    &\le 1.
\end{align*}
Using the decoupling estimate (Theorem~\ref{thrm: decoupling_ineq}) we thus conclude\begin{align*}
    &\left(\int_{\mathbb{R}}\int_{\mathcal{V}}\int_{\mathcal{V}}\max_{K\in\mathcal{D}_{i,\kappa}}|b_K(y_k,z_K)|\mathbb{E}\|\sum_{K\in\mathcal{D}_{i,\kappa}}\varepsilon_K1_K(x)\Delta_K^{l_1}f_1(z_K)\|_{S_{p_1}}^{p_1}d\nu(z) d\nu(y)dx\right)^{1/p_1}\\
    &\quad\le \left(\int_{\mathbb{R}}\int_{\mathcal{V}}\int_{\mathcal{V}}\mathbb{E}\|\sum_{K\in\mathcal{D}_{i,\kappa}}\varepsilon_K1_K(x)\Delta_K^{l_1}f_1(z_K)\|_{S_{p_1}}^{p_1}d\nu(z) d\nu(y)dx\right)^{1/p_1} \\
    &\quad=\left(\mathbb{E}\int_{\mathbb{R}}\int_{\mathcal{V}}\|\sum_{K\in\mathcal{D}_{i,\kappa}}\varepsilon_K1_K(x)\Delta_K^{l_1}f_1(z_K)\|_{S_{p_1}}^{p_1}d\nu(z)dx\right)^{1/p_1} \\
    &\quad\le\beta_{{p_1},S_{p_1}}\int_{\mathbb{R}} \|f_1\|_{S_{p_1}}^{p_1}dx .
\end{align*}
The second case hence yields the estimate \begin{equation*}
    \|S^k_i(f_1,f_2)\|_{L^p(\mathbb{R},S_p)} \lesssim  \beta_{p,S_p}^2 \beta_{p_1,S_{p_1}}
\end{equation*}
in the case where the index of the non-cancellative Haar function is $j_0=2$.

Boundedness of the cases $j_0=1,3$  already follows from this result  by cyclic permutation of the functions in the  trilinear estimate~\eqref{Eqn=TriForm}  for $\Lambda_{S^k_i}$.  However, we can improve the resulting constant as follows.

\vspace{0.3cm}

\noindent {\bf Case 2 is self-improving using cyclic permutations.}  
Let $j_0=1$ denote the index of the non-cancellative Haar function. By applying the decoupling estimate, Stein's inequality, and H\"older's inequality in the same manner as in the $j_0=2$ case, we obtain \begin{multline*}
    \|S_i^k(f_1,f_2)\|_{L^p(\mathbb{R},S_p)} 
    \le \beta^2_{p,S_p} \|f_1\|_{L^{p_1}(\mathbb{R},S_{p_1})} \\ 
   \times  (\mathbb{E}\int_{\mathbb{R}}\int_{\mathcal{V}}\|\sum_{K\in\mathcal{D}_{i,\kappa}}\varepsilon_K1_K(x)|K|^{1/2}
    \sum_{\substack{L_2,L_3\in\mathcal{D}\\L_j^{(l_j)}=K}}b_{L_2,L_3,K}\langle f_2,h_{L_2}\rangle h_{L_3}(y_K)\|_{S_{p_2}}^{p_2} d\nu(y)dx)^{1/p_2}.
\end{multline*}
Proceeding to estimate the remaining integral as in the $j_0=2$ case yields \begin{equation*}
    \|S_i^k(f_1,f_2)\|_{L^p(\mathbb{R},S_p)} \le \beta^2_{p,S_p} \beta_{p_2,S_{p_2}} \|f_1\|_{L^{p_1}(\mathbb{R},S_{p_1})}\|f_2\|_{L^{p_2}(\mathbb{R},S_{p_2})}.
\end{equation*}
In order to optimise the behaviour of this constant as $p \searrow 1$, we now apply the following permutation argument.

By writing out the trilinear form associated with $S_i^k$, see~\eqref{eqn: new_shift}, where we add the index of the non-cancellative Haar function as a superscript, we see that \begin{align*}
    \Lambda^{(j_0=1)}_{S_i^k}(f_1,f_2,f_3)&= \sum_{K\in\mathcal{D}_{i,\kappa}}\sum_{\substack{L_2,L_3\in\mathcal{D}\\L_j^{(l_j)}=K}}b_{L_2,L_3,K}\tau\left(\langle f_1,h^0_{K}\rangle\langle f_2,h_{L_2}\rangle\langle f_3,h_{L_3}\rangle\right) \\ &=
    \sum_{K\in\mathcal{D}_{i,\kappa}}\sum_{\substack{L_2,L_3\in\mathcal{D}\\L_j^{(l_j)}=K}}b_{L_2,L_3,K}\tau\left(\langle f_3,h_{L_3}\rangle\langle f_1,h^0_{K}\rangle\langle f_2,h_{L_2}\rangle\right) \\
    &= \Lambda^{(j_0=2)}_{S^{'k}_i}(f_3,f_1,f_2),
\end{align*}
where $S^{'k}_i$ is a dyadic shift with $j_0=2$ and the same scalar sequence $(b_{L_j,K})$ as $S_i^k$ up to renumbering. Noting that $\beta_{p^*,S_{p^*}}=\beta_{p,S_p}$ (see e.g.\ \cite{aibs_vol_1}), we can thus apply the estimate of the $j_0=2$ case to conclude \begin{equation*}
    |\Lambda^{(j_0=1)}_{S_i^k}(f_1,f_2,f_3)| = |\Lambda^{(j_0=2)}_{S^{'k}_i}(f_3,f_1,f_2)| \lesssim \beta_{p_2,S_{p_2}}^2\beta_{p,S_p} \prod_{i=1}^3 \|f_i\|_{L^{p_i}(\mathbb{R},S_{p_i})},
\end{equation*}
and by similar cyclic permutation arguments \begin{align*}
    |\Lambda^{(j_0=2)}_{S_i^k}(f_1,f_2,f_3)| & = |\Lambda^{(j_0=1)}_{S_i^{'k}}(f_2,f_3,f_1)| \lesssim\beta_{p_1,S_{p_1}}^2\beta_{p,S_p} \prod_{i=1}^3 \|f_i\|_{L^{p_i}(\mathbb{R},S_{p_i})}, \\
    |\Lambda^{(j_0=3)}_{S_i^k}(f_1,f_2,f_3)| & = |\Lambda^{(j_0=1)}_{S_i^{'k}}(f_3,f_1,f_2)| \lesssim \beta_{p_2,S_{p_2}}^2\beta_{p_1,S_{p_1}} \prod_{i=1}^3 \|f_i\|_{L^{p_i}(\mathbb{R},S_{p_i})},  \\
    |\Lambda^{(j_0=3)}_{S_i^k}(f_1,f_2,f_3)| & = |\Lambda^{(j_0=2)}_{S_i^{'k}}(f_2,f_3,f_1)| \lesssim\beta_{p_1,S_{p_1}}^2\beta_{p_2,S_{p_2}} \prod_{i=1}^3 \|f_i\|_{L^{p_i}(\mathbb{R},S_{p_i})},
\end{align*}
where $S_i^{'k}$ may denote different shifts in each line.

Combining all cases, where we consider all possible locations of the non-cancellative Haar function in Case~2, we conclude (using $\kappa_{2,q}\le 60$, see Remark~\ref{rem: kahane_khintchine_constant})
\begin{equation*}
    C({p,p_1,p_2})\lesssim \beta_{p}\beta_{p_1}\beta_{p_2} +  
    +\min(\beta_{p_1}^2\beta_{p},\beta_{p}^2\beta_{p_1})
 + \min(\beta_{p_2}^2\beta_{p},\beta_{p}^2\beta_{p_2}) +\min(\beta_{p_2}^2\beta_{p_1},\beta_{p_1}^2\beta_{p_2}),
\end{equation*}
where we set $\beta_p:=\beta_{p,S_p}$. Note that by~\cite{Randri}, we have $\beta_{p,S_p}=pp^*$, hence this notation agrees with the notation of the constant $C$ in~\eqref{Eqn=CFunction} used in the main body of this paper.
\end{proof}


\begin{thebibliography}{99}

\bibitem[BoFe84]{BoFe84} 
M. Bo\.{z}ejko, G. Fendler, 
\emph{Herz-Schur multipliers and completely bounded multipliers of the Fourier algebra of a locally compact group} (English, with Italian summary), 
Boll. Un. Mat. Ital. A (6) 3 (1984), no. 2, 297–302. 

\bibitem[CCP22]{CCP-JMPA}
   L. Cadilhac, J. Conde-Alonso, J. Parcet,
   \emph{Spectral multipliers in group algebras and noncommutative Calder\'on-Zygmund theory},
    J. Math. Pures Appl. (9) {\bf 163} (2022), 450--472. \doi{10.1016/j.matpur.2022.05.011}.





\bibitem[CMPS14]{CMPS}
   M. Caspers, S. Montgomery-Smith, D. Potapov, F. Sukochev,
   \emph{The best constants for operator Lipschitz functions on Schatten classes},
   J. Funct. Anal. {\bf 267} (2014), no. 10, 3557--3579. \doi{10.1016/j.jfa.2014.08.018}

\bibitem[CaSa15]{CaSa15} 
M. Caspers, M. de la Salle, 
\emph{Schur and Fourier multipliers of an amenable group acting on non-commutative $L^p$-spaces}, 
Trans. Amer. Math. Soc. {\bf 367} (2015), no. 10, 6997--7013. \doi{10.1090/S0002-9947-2015-06281-3}


\bibitem[CPSZ15]{CPSZ-JOT}
   M. Caspers, D. Potapov, F. Sukochev, D. Zanin,
   \emph{Weak type estimates for the absolute value mapping},
   J. Operator Theory {\bf 73} (2015), no. 2, 361--384. \doi{10.7900/jot.2013dec20.2021}.


\bibitem[CPSZ19]{CPSZ}
   M. Caspers, D. Potapov, F. Sukochev, D. Zanin,
   \emph{Weak type commutator and Lipschitz estimates: resolution of the Nazarov-Peller conjecture},
    Am. J. Math. {\bf 141}, No. 3, 593--610 (2019). \doi{10.1353/ajm.2019.0019}.



\bibitem[CJSZ20]{CJSZ}
   M. Caspers, M. Junge, F. Sukochev, D. Zanin,
  \emph{BMO-estimates for non-commutative vector valued Lipschitz functions},
   J. Funct. Anal. {\bf 278} (2020), no. 3, 108317. \doi{10.1016/j.jfa.2019.108317}.



\bibitem[CSZ21]{CSZ-Israel}
   M. Caspers, F. Sukochev, D. Zanin,
   \emph{Weak $(1,1)$ estimates for multiple operator integrals and generalized absolute value functions},
   Israel J. Math. {\bf 244} (2021), no. 1, 245--271. \doi{10.1007/s11856-021-2179-0}.





\bibitem[CKV22]{CKV}
  M. Caspers, A. Krishnaswamy-Usha, G. Vos,
   \emph{Multilinear transference of Fourier and Schur multipliers acting on non-commutative $L^p$-spaces},
   Canad. J. Math. {\bf 75} (2023), no. 6, 1986--2006. \doi{10.4153/S0008414X2200058X}.

\bibitem[CJKM23]{CJKM}
   M. Caspers, B. Janssens, A. Krishnaswamy-Usha,  L. Miaskiwskyi,
  \emph{Local and multilinear noncommutative de Leeuw theorems},
   Math. Ann. {\bf 388} (2024), no. 4, 4251--4305. \doi{10.1007/s00208-023-02611-z}.



 

\bibitem[CLS21]{CLS-AIF}
     C. Coine, C. Le Merdy, F. Sukochev,
   \emph{When do triple operator integrals take value in the trace class?},
    Ann. Inst. Fourier {\bf 71}, No. 4, 1393--1448 (2021). \doi{10.5802/aif.3422}


\bibitem[CLPST16]{CLPST}
  C. Coine, C.  Le Merdy, D. Potapov, F. Sukochev, A. Tomskova,
   \emph{Resolution of Peller’s problem concerning Koplienko-Neidhardt trace formulae},
   Proc. Lond. Math. Soc. (3) {\bf 113}, No. 2, 113--139 (2016). \doi{10.1112/plms/pdw024}






\bibitem[CGPT22a]{ParcetAnnals}
   J.M. Conde-Alonso, A.M. Gonz\'alez-P\'erez, J. Parcet, E. Tablate,
   \emph{Schur multipliers in Schatten-von Neumann classes},
   Ann. of Math. (2) {\bf 198} (2023), no. 3, 1229--1260. \doi{10.4007/annals.2023.198.3.5}

\bibitem[Dav88]{Davies}
   E. Davies,
   \emph{Lipschitz continuity of functions of operators in the Schatten classes},
   J. Lond. Math. Soc. {\bf 37}  (1988) 148--157. \doi{10.1112/jlms/s2-37.121.148}

\bibitem[DeLo93]{Devore}
   R. DeVore, G.  Lorentz,
   \emph{Constructive approximation},
   Grundlehren der Mathematischen Wissenschaften. 303. Berlin: Springer- Verlag. x, 449 p. (1993).


\bibitem[DLMV20a]{DiPlinioMathAnn}
   F. Di Plinio, K. Li, H. Martikainen, E. Vuorinen,
   \emph{Multilinear singular integrals on non-commutative $L^p$-spaces},
    Math. Ann. {\bf 378}, No. 3-4, 1371--1414 (2020). \doi{10.1007/s00208-020-02068-4}
    
\bibitem[DLMV20b]{diplinio_multilinear_czo}
F. Di Plinio, K. Li, H. Martikainen, E. Vuorinen,
\emph{Multilinear operator-valued Calderón-Zygmund theory}, Journal of Functional Analysis, 279(8), 108666 (2020). \doi{10.1016/j.jfa.2020.108666}


\bibitem[FiWo01]{FigielW}
   T. Figiel,  P. Wojtaszczyk,  
  \emph{Special bases in function spaces},
   Handbook of the geometry of Banach spaces, Vol. I, 561--597,
   North-Holland Publishing Co., Amsterdam, 2001.



\bibitem[GMN99]{Gesztesy}
  F. Gesztesy, K. Makarov, S. Naboko.
  The spectral shift operator. Mathematical results in quantum mechanics (Prague, 1998),
   59–90, Oper. Theory Adv. Appl., 108, Birkhäuser, Basel, 1999. \doi{10.1007/978-3-0348-8745-8_5}



\bibitem[GoKr69]{GohbergKrein}
    I.C.  Gohberg, M.G. Krein,
   \emph{Introduction to the theory of linear nonselfadjoint operators},
      Translations of Mathematical Monographs. 18. Providence, RI: American Mathematical Society (AMS). xv, 378 p. (1969).



\bibitem[GrTo02]{GrafakosTorres}
   L. Grafakos, R. Torres,
   \emph{Multilinear Calder\'on-Zygmund theory},
    Adv. Math. {\bf 165}, No. 1, 124--164 (2002). \doi{10.1006/aima.2001.2028}

\bibitem[Gra04]{GrafakosOldNew}
  L. Grafakos, 
  \emph{Classical and modern Fourier analysis},
    Pearson Education, Inc., Upper Saddle River, NJ, 2004, xii+931 pp.

\bibitem[HaHy16]{hanninen_operator-valued_2016}
T. Hänninen, T. Hytönen, 
\emph{Operator-valued dyadic shifts and the {T}(1) theorem},
{Monatshefte für Mathematik}, Vol 180, No. 2, 213--253 (2016). \doi{10.1007/s00605-016-0891-3}.

\bibitem[HNVW16]{aibs_vol_1}
T. Hytönen, J. van Neerven, M. Veraar, L. Weis,
\emph{Analysis in Banach Spaces}, Volume I: Martingales and Littlewood-Paley Theory,
ISBN {9783319485201}, {Springer}, {2016}. 

\bibitem[JuSh05]{JungeSherman}
   M. Junge, D. Sherman,
   \emph{Noncommutative $L^p$-modules},
    J. Operator Theory {\bf 53} (2005), no. 1, 3--34.

\bibitem[Kat73]{Kato} 
  T. Kato, 
  \emph{Continuity of the map $S \mapsto \vert S \vert$  for linear operators},
  Proc. Japan Acad. {\bf 49} (1973), 157--160. \doi{10.3792/pja/1195519395}

\bibitem[KeSt99]{KenigStein}
C. E. Kenig and E. M. Stein, 
\emph{Multilinear estimates and fractional integration}, 
Mathematical Research Letters {\bf 6} (1999), no. 1, 1–-15. \doi{10.4310/mrl.1999.v6.n1.a1}.

\bibitem[Kop84]{kopl_trace}
L. S. Koplienko, 
\emph{Trace formula for perturbations of nonnuclear type}, 
Sibirsk. Mat. Zh. 25 (1984), 62-71 (Russian). English transl. in Siberian Math. J. 25 (1984), 735–74. \doi{10.1007/BF00968686}

 


\bibitem[Kre53]{Krein1}
   M.G. Krein,
   \emph{On the trace formula in perturbation theory},
   {\it Mat. Sbornik N.S.} 33(75): 597–626, 1953.


\bibitem[Kre62]{Krein2}
   M.G. Krein.
   \emph{On perturbation determinants and a trace formula for unitary and self-adjoint operators}.
   \textit{Dokl. Akad. Nauk SSSR}. 144: 268--271, 1962.


\bibitem[Kre83]{krein_perturbation}
 M. G. Krein, 
 \emph{On certain new studies in the perturbation theory for selfadjoint operators}, in Topics in Differential and Integral Equations and Operator Theory, I. Gohberg, Ed. Basel: Birkhäuser
Basel, 107–-172 (1983). \doi{10.1007/978-3-0348-5416-0}.   


\bibitem[LaSa11]{LafforgueDelaSalle}
   V. Lafforgue, M. de la Salle, 
   \emph{Noncommutative $L^p$-spaces without the completely bounded approximation property},  
    Duke Math. J. {\bf 160} (2011), no. 1, 71--116.



\bibitem[LMOV18]{bilinrepthrm}
  K. Li, H. Martikainen, Y. Ou, E. Vuorinen,
   \emph{Bilinear representation theorem},
    Trans. Am. Math. Soc. {\bf 371} (2018), no. 6, 4193--4214. \doi{10.1090/tran/7505}.


\bibitem[Lif52]{Lifschitz}
    I.M. Lifschitz.
   \emph{On a problem of perturbation theory}.  \textit{Uspekhi Mat. Nauk.}. 7:171--180, 1952.



\bibitem[McDSu22]{McDonaldSukochev}
   E. McDonald, F. Sukochev, 
   \emph{Lipschitz estimates in quasi-Banach Schatten ideals},
   Math. Ann. {\bf 383} (2022), no. 1--2, 571--619. \doi{10.1007/s00208-021-02247-x}.



\bibitem[NeRi11]{NeRi11} 
 S. Neuwirth, \'E. Ricard, 
 \emph{Transfer of Fourier multipliers into Schur multipliers and sumsets in a discrete group}, 
 Canad. J. Math. {\bf 63}  (2011), no. 5, 1161--1187. \doi{10.4153/CJM-2011-053-9}.

\bibitem[Pel85]{Peller}
  V.V. Peller,
  \emph{Hankel operators in the theory of perturbations of unitary and selfadjoint operators},
   Funktsional. Anal. i Prilozhen. {\bf 19} (1985), no. 2, 37--51, 96. \doi{10.1007/BF01078390}.

\bibitem[PSS13]{PSS-Inventiones}
   D. Potapov, A. Skripka, F. Sukochev, 
   \emph{Spectral shift function of higher order},  
   Invent. Math. {\bf 193}, No. 3, 501--538 (2013). \doi{10.1007/s00222-012-0431-2}.

\bibitem[PoSu11]{PoSu11}
D. Potapov and F. Sukochev, 
\emph{Operator-Lipschitz functions in Schatten–von Neumann classes},
Acta Mathematica {\bf 207}, no. 2, pp. 375--389, Dec. 2011. \doi{10.1007/s11511-012-0072-8}.

\bibitem[Ran02]{Randri}
   N. Randrianantoanina,  
   \emph{Non-commutative martingale transforms},  
   J. Funct. Anal. {\bf 194} (2002), no.1, 181--212. \doi{/10.1006/jfan.2002.3952}.

\bibitem[Rei24]{JesseThesis}
   J. Reimann,
   \emph{Schur Multipliers of Divided Differences and Multilinear Harmonic Analysis},
   Master thesis at TU Delft,  available at \href{https://repository.tudelft.nl/islandora/object/uuid%3A2ec66360-478e-4768-be95-044c2c015d4f}{https://repository.tudelft.nl/}.


\bibitem[SkTo19]{SkripkaTomskova}
 A. Skripka, A. Tomskova, 
  \emph{Multilinear operator integrals}, 
  Lecture Notes in Math., 2250, Springer, Cham, 2019, xi+190 pp. \doi{10.1007/978-3-030-32406-3}

\end{thebibliography}
\end{document}